\newtheorem{theorem}{Theorem}[section]
\newtheorem{corollary}{Corollary}[section]
\newtheorem{lemma}{Lemma}[section]
\newtheorem{remark}{Remark}[section]
\numberwithin{equation}{section}
\newcommand{\norm}[1]{\left\Vert#1\right\Vert}
\newcommand{\eps}{\varepsilon}
\newcommand{\bH}{{\bf H}}
\newcommand{\bu}{{\bf u}}
\newcommand{\bv}{{\bf v}}
\newcommand{\bw}{{\bf w}}
\newcommand{\bx}{{\bf x}}
\newcommand{\bff}{{\bf f}}
\newcommand{\bfg}{{\bf g}}
\newcommand{\bphi}{{\boldsymbol \phi}}
\newcommand{\bfX}{{\bf X}}\newcommand{\bfH}{{\bf H}}
\newcommand{\bfV}{{\bf V}}
\newcommand{\bfeta}{{\boldsymbol \eta}}
\title{Stokes with variable viscosity}
\renewcommand\expandafter\subsection\expandafter{%
		\expandafter\@fb@secFB\subsection
	}%
\date{}
\title{A Family of Second Order Time Stepping Methods for the Darcy-Brinkman Equations}
\author{
Aytekin  \c{C}{\i}b{\i}k
\thanks{Department of Mathematics, Gazi University, 06500 Ankara, Turkey; abayram@gazi.edu.tr}
\and
Medine Demir
\thanks{Department of Mathematics, Middle East Technical University, 06800 Ankara, Turkey; dmedine@metu.edu.tr}
\and
Songul Kaya
\thanks{
Department of Mathematics, Middle East Technical University, 06800 Ankara, Turkey; smerdan@metu.edu.tr}
}
\begin{document}
\maketitle

\begin{abstract}This study presents an efficient, accurate, effective and unconditionally stable  time stepping scheme for the Darcy-Brinkman equations in double-diffusive convection. The stabilization within the proposed method uses the idea of stabilizing the curvature for velocity, temperature and concentration equations. Accuracy in time is proven and the convergence results for the fully discrete solutions of problem variables are given. Several numerical examples including a convergence study are provided that support the derived theoretical results and demonstrate the efficiency and the accuracy of the method.
\end{abstract}

Keywords: Darcy-Brinkman, curvature stabilization, double diffusion\\

\section{Introduction}

Double-diffusive convection is a mechanism, in which the fluid motion occurs due to buoyancy arising from the combination of temperature and concentration gradients. It is related with an increasing number of fields such as, metallurgy, oceanography, contaminant transport, petroleum drilling etc.,\cite{BS89,NT15,SG14, XPT}. The accurate and efficient numerical solutions of these flows are known to be the core of many applications. The main objective of this paper is to propose, analyze and test a family of second order time stepping methods for the Darcy-Brinkman system, by extending an earlier study of       \cite{JMR} for the Navier-Stokes equations (NSE) based on the pioneering work of \cite{C14,C16}. The paper's underlying ideas are to incorporate linearizations and stabilization terms such that the discrete curvature solutions in velocity, temperature, concentration and pressure are proportional to this combination.

Under the assumption of Boussinesq approximation, the governing equations of double-diffusive convection  is given by the Darcy-Brinkman system (see \cite{goy}),
\begin{eqnarray}\label{bous}
 \begin {array}{rcll}
\bu_t -\nu \Delta \bu+ (\bu\cdot\nabla)\bu +Da^{-1}\bu+ \nabla p &=& (\beta_T T + \beta_S S)\bfg  &
 \mathrm{in }\ (0,t]\times \Omega, \\
  \nabla \cdot \bu&=& 0& \mathrm{in }\ (0,t]\times  \Omega,\\
\bu&=& \mathbf{0}& \mathrm{in }\ (0,t]\times\partial \Omega,\\
 T_t-\gamma \Delta T+\bu\cdot \nabla T&=&0 & \mathrm{in }\ (0,t]\times\partial \Omega,\\
 S_t-{D_c}\Delta S+\bu\cdot \nabla S&=&0& \mathrm{in }\ (0,t]\times\partial \Omega,\\
T,S&=&0 &\mathrm{on} \ \Gamma_D,\\
\dfrac{\partial T}{\partial n}&=&0, \ \dfrac{\partial S}{\partial n}=0& \mathrm{on} \ \Gamma_N,\\
\bu(0,\bx) & = & \bu_0,\ T(0,\bx)  =  T_0 , \ S(0,\bx) = S_0 & \mathrm{in }\ \Omega.
 \end{array}
\end{eqnarray}
Here $\bu$ is the fluid velocity, $\bu_0$, the initial velocity,  $p$
the pressure, $T$ the temperature, $T_0$, the initial temperature, $S$ the concentration, $S_0$, the initial concentration. We also have the kinematic viscosity $\nu >0$, the Darcy number $Da$, the thermal diffusivity $\gamma > 0$, the mass diffusivity ${D_c} > 0$,the gravitational acceleration vector $\bfg$ and the thermal and solutal expansion coefficients are $\beta_T$, $\beta_S$, respectively.
The dimensionless parameters are the buoyancy ratio $N=\dfrac{\beta_S \Delta S}{\beta_T \Delta T}$, the Schmidt number $Sc=\dfrac{\nu}{D_c}$, Prandtl number $Pr=\dfrac{\nu}{\gamma}$, the Darcy number $Da=\dfrac{K}{H^2}$, the Lewis number $Le=\dfrac{Sc}{Pr}$ and the thermal Rayleigh number $Ra=\dfrac{\bfg \beta_T \Delta T H^3}{\nu \gamma }$. $\Gamma_D$ be a regular open subset and $\Gamma_D=\partial \Omega \setminus \Gamma_N $. Here cavity height $H$, a permeability $K$, $\Delta T$ and $\Delta S$ are the temperature and concentration differences, respectively.

The common solution approach for the numerical solution of the time dependent multiphysics problems requires their discretization in space and time as well as linearization. There have been a considerable number of different studies devoted to such discretizations, see e.g.\cite{GS00}. Moreover, since  linear extrapolation schemes require the solution of only one linear system per time step, they are preferable when it is compared with the fully implicit schemes. The Crank Nicholson with linear extrapolation (CNLE) of \cite{B76} has been considered in many recent studies such as \cite{I13} and two step backward differentiation formula BDF2 with linear extrapolation (BDF2LE) for different flow problems \cite{AKR,ZXZ}. The numerical analysis of the BDF2LE time stepping scheme for natural convection equations in the semi discrete and fully discrete case has been carried out by \cite{R12}, and three step backward differentiation for the  double-diffusive convection with Soret effect is considered in \cite{R15}.

In this study, we focus on an accurate regularization for a family of second order time stepping methods for the Darcy-Brinkman system through underlying ideas of \cite{JMR}. As it is shown below, a new family of the time stepping methods includes well-known CNLE and BDF2LE schemes obtained by appropriate choices of the parameters. The method (\ref{forvel})-(\ref{forcont}) can be thought between CNLE and BDF2LE. A successful stabilization method is achieved by using the idea of `curvature stabilization'. As noted in \cite{JMR}, a family of the method based on curvature stabilization leads to a sufficient stabilization along with the optimal accuracy in time. Hence, it is a natural and important next step to extend this methodology to flows governed by the system (\ref{bous}).

The paper is organized as follows. In Section 2, the numerical scheme of the second order time stepping method for (\ref{bous}) is described. In Section 3, some mathematical preliminaries are presented which are useful in the analysis. We give comprehensive stability analysis and a priori error analysis of the method in Section 4. Section 5 presents numerical illustrations that verify the analytical results. The paper ends with conclusion remarks.

\section{A family of second order time stepping schemes }

In this section, a family of second order IMEX time stepping methods for (\ref{bous}) is presented in detail. For this purpose, let partition the time interval $[0,t]$ into $N$ sub intervals with time step size $\Delta t=t/N$ and $t_{n+1}=(n+1)\Delta t$ with $n=0,1,2,...,N-1$. For simplicity, we consider the constant step sizes $\Delta t=t_{n}-t_{n-1}$ and  the quantities at time level $t_n$ are denoted by a subscript $n$. A general time step form of (\ref{bous}) reads as
\begin{gather}
\frac{(\theta+\frac{1}{2})\bu_{n+1}-2\theta \bu_n+(\theta-\frac {1}{2})\bu_{n-1}}{\Delta t} -\theta (\nu+\eps)\Delta \bu_{n+1}-(\nu-\theta(\nu+2\eps))\Delta \bu_n-\theta \eps\Delta \bu_{n-1} \nonumber
\\
+((\theta+1)\bu_n-\theta \bu_{n-1})\cdot \nabla \Big(\theta \frac{\nu+\eps}{\nu}\bu_{n+1}+\Big (1-\theta \frac{\nu+2\eps}{\nu}\bu_{n}\Big)+\theta\frac{\eps}{\nu}\bu_{n-1}\Big)  \nonumber
\\
+Da^{-1}\Big(\theta \frac{\nu+\eps}{\nu}\bu_{n+1}+\Big (1-\theta \frac{\nu+2\eps}{\nu}\bu_{n}\Big)+\theta\frac{\eps}{\nu}\bu_{n-1}\Big) +\theta \frac{\nu+\epsilon}{\nu}\nabla p_{n+1}\nonumber
\\
+(1-\theta \frac{\nu+2\eps}{\nu})\nabla p_n+\theta \frac{\epsilon}{\nu}\nabla p_{n-1}=\Big(\beta_T((\theta+1)T_n-\theta T_{n-1})+ \beta_S((\theta+1)S_n-\theta S_{n-1}))\Big)\bfg, \label{forvel}
\\
\nabla \cdot \bu_{n+1}=0 , \label{conteq}
\\
\frac{(\theta+\frac{1}{2})T_{n+1}-2\theta T_n+(\theta-\frac {1}{2})T_{n-1}}{\Delta t} -\theta (\gamma+\eps_1)\Delta T_{n+1}-(\gamma-\theta(\gamma+2\eps_1))\Delta T_n-\theta \eps\Delta T_{n-1} \nonumber
\\
+((\theta+1)\bu_n-\theta \bu_{n-1})\cdot \nabla \Big(\theta \frac{\gamma+\eps_1}{\gamma}T_{n+1}+\Big (1-\theta \frac{\gamma+2\eps_1}{\gamma}T_{n}\Big)+\theta\frac{\eps_1}{\gamma}T_{n-1}\Big)
=0, \label{fortemp}
\\
\frac{(\theta+\frac{1}{2})S_{n+1}-2\theta S_n+(\theta-\frac {1}{2})S_{n-1}}{\Delta t} -\theta (D_c+\eps_2)\Delta S_{n+1}-(D_c-\theta(D_c+2\eps_2))\Delta S_n-\theta \eps_2\Delta S_{n-1} \nonumber
\\
+((\theta+1)\bu_n-\theta \bu_{n-1})\cdot \nabla \Big(\theta \frac{D_c+\eps_2}{D_c}S_{n+1}+\Big (1-\theta \frac{D_c+2\eps_2}{D_c}S_{n}\Big)+\theta\frac{\eps_2}{D_c}S_{n-1}\Big)
=0, \label{forcont}
\end{gather}
with the parameters $\theta\in {[\frac{1}{2},1]}$ and $\epsilon$, $\epsilon_{1},\eps_2 \geq 0$. Numerical realizations suggest that sufficient stabilizations are obtained with the choices  $\eps={O}(\nu)$, $\eps_1={O}({\gamma})$ and $\eps_2={O}({D_c})$. There are several variants of given time step scheme. By appropriate choices of $\theta$, $\eps$, $\eps_1$ and $\eps_2$ well known time stepping schemes are obtained. For instance, the choices $\theta=1,\eps=\eps_1=\eps_2=0$ and $\theta=1/2, \eps=\eps_1=\eps_2=0$ lead to just usual BDF2LE and CNLE, respectively.

There are number of investigations of  the stabilization techniques of the time discretizations applied to NSE. Some related works include
\cite{L09}, where an artificial viscosity stabilization of the linear system and correction for the associated loss of accuracy is studied in \cite{DP09}, where  two first-order semi-implicit schemes for eddy viscosity model is investigated for the NSE. Examples of other related stabilizations can be found in the literature, e.g, \cite{H03,WL97}.

Unlike the common stabilization technique discussed in these studies, in the method (\ref{forvel})-(\ref{forcont}) the curvature $(\bu_{n+1}-2\bu_{n}+\bu_{n-1})$ is stabilized to obtain efficient and accurate robust method for (\ref{bous}). The idea was first used in \cite{C14,C16} as a time-stepping method for ODE's. A considerable number of investigations based on the same spirit can be found in \cite{HLLT,LLT,LC14,W9,YJ}. The researchers  show that the curvature stabilization method has more advantages of being more accurate and keeping important flow quantities.

To define the method precisely, we will approximate the solution of (\ref{forvel})-(\ref{fortemp}) by using the finite element method. Let $\bfX=(\bfH_0^1(\Omega))^d$, $Q= L_0^2(\Omega)$ be the velocity and pressure spaces and  $W:=\{S\in H^1(\Omega): S=0 \ \text{on} \ \Gamma_D\}$ and $\Psi:=\{\Phi\in H^1(\Omega): \Phi=0 \ \text{on} \ \Gamma_D\}$ be the temperature and concentration spaces, respectively.

Let $\bfX^h\subset \bfX, W^h\subset W, \Psi^h\subset \Psi$ and $Q^h\subset Q$  be finite element spaces where the velocity and pressure spaces fulfill the inf-sup condition (\ref{infsup}). The usual $L^{2}(\Omega)$ norm and the inner product is denoted by $\norm{.}$ and $(\cdot,\cdot) $, respectively. Define skew symmetric trilinear forms
 \begin{eqnarray}
 b^{*}(\bu,\bv,\bw) &=&\frac{1}{2} (\bu \cdot \nabla\bv, \bw)-\frac{1}{2}(\bu\cdot \nabla\bw, \bv), \label{s1}
 \\
 c^{*}(\bu,T,\chi) &=&\frac{1}{2} (\bu \cdot \nabla T, \chi)-\frac{1}{2}(\bu\cdot \nabla\
 \chi, T), \label{s2}
 \\
  d^{*}(\bu,S,\Phi) &=&\frac{1}{2} (\bu \cdot \nabla S, \Phi)-\frac{1}{2}(\bu\cdot \nabla \Phi, S) \label{s3}
 \end{eqnarray}
and the operators
\begin{eqnarray}
{D}_{n,\theta}(w) :=\frac{(\theta+\frac{1}{2})w_{n+1}-2\theta w_n+(\theta-\frac {1}{2})w_{n-1}}{\Delta t},  \label{op1}
\\
{F}_{n+\theta}^{\delta,\mu}(w):= \theta \frac{(\mu+\delta)}{\mu} w_{n+1}+\Big(1-\theta\frac{\mu+2\delta}{\mu}\Big) w_n+\theta \frac{\delta}{\mu} w_{n-1}, \label{op2}
\\
{H}_{n+\theta}(w):=(\theta+1)w_n-\theta w_{n-1}, \label{op3}
\end{eqnarray}
where $({\delta,\mu})=(\eps,\nu)$ in the case $w=\bu$ (for the velocity), $({\delta,\mu})=(\eps_1,\gamma)$ in the case $w=T$ (for the temperature) and $({\delta,\mu})=(\eps_2,D_c)$ (for the concentration).\\Note that using the definition of the skew symmetric trilinear forms $(\ref{s1})$ and $(\ref{s2})$, one can directly obtain
	\begin{eqnarray}
	b^{*}(\bu,\bv,\bv)=0,\quad c^{*}(\bu,T,T)=0,\quad d^{*}(\bu,S,S)=0 \label{bcdfn}
	\end{eqnarray}
for all $\bu, \bv \in \bfX$ and $ T\in W,  S\in\Psi $. By using the operators (\ref{op1})-(\ref{op2}) and trilinear forms (\ref{s1})-(\ref{s3}), we state a family of second order time stepping method (\ref{forvel})-(\ref{forcont}) in finite dimensional spaces.

{\bf Algorithm}. Let the initial conditions $\bu_0$, $T_0$ and $S_0$ be given. Define $\bu_0^h$, $\bu_{-1}^h$, $T_0^h$, $T_{-1}^h$, $S_{0}^h$ and $S_{-1}^h$ as the nodal interpolants of $\bu_0(\bx), T_0$ and $S_0$, respectively.
Then, given time step $\Delta t$ and $\bu_n,\bu_{n-1},T_n, T_{n-1}, S_n$ and $S_{n-1},$ compute $\bu_{n+1}\in \bfX^h, T_{n+1}\in W^h, S_{n+1}\in \Psi^h, $ and $p_{n+1}\in Q^h$ satisfying
\begin{gather}
({D}_{n+\theta}(\bu^h),\bv^h)+\nu ({F}_{n+\theta}^{\eps,\nu}(\nabla \bu^h),\nabla \bv^h)+
b^{*}({H}_{n+\theta}(\bu^h),{F}_{n+\theta}^{\eps,\nu}(\bu^h),\bv^h))
-({F}_{n+\theta}^{\eps,\nu}(p^h),\nabla \cdot \bv^h), \nonumber
\\
=\beta_{T} (\bfg{H}_{n+\theta}(T^h),\bv^h)+ \beta_{S}(\bfg {H}_{n+\theta}(S^h),\bv^h), \label{Discvel}
\\
(\nabla\cdot \bu^h,q^h)=0,  \label{Dispres}
\\
({D}_{n+\theta}(T^h),\chi^h)+\gamma({F}_{n+\theta}^{\eps_1,\gamma}(\nabla T^h),\nabla \chi^h)+c^{*}({H}_{n+\theta}(\bu^h),{F}_{n+\theta}^{\eps_1,\gamma}(T^h),\chi^h)=0,\label{Distemp}
\\
({D}_{n+\theta}(S^h),\Phi^h)+\gamma({F}_{n+\theta}^{\eps_2,D_c}(\nabla S^h),\nabla \Phi^h)+D^{*}({H}_{n+\theta}(\bu^h),{F}_{n+\theta}^{\eps_2,D_c}(S^h),\Phi^h)=0, \label{Discont}
\end{gather}
for all $(\bv^h,\chi^h,\Phi^h,q^h) \in (\bfX^h,W^h,\Psi^h, Q^h)$.
\begin{remark}
The studied method requires the specifications of the initial condition. The initial condition $\bu_0^h$ needs to be weakly divergence-free in order to achieve stability in our method.
\end{remark}

\begin{remark}
 A family of second order method (\ref{Discvel})-(\ref{Discont}) is a fully decoupled system and under the certain choices of parameters, it requires only a linear system to be solved at each time step.
\end{remark}
\section{Mathematical Preliminaries}

In this paper, we consider a convex polygonal or polyhedral domain $\Omega$ in ${\rm I\!R} ^d$, $ (d=2,3) $ with boundary $\partial \Omega$. Standard notations of \cite{Ada75} for Lebesgue and Sobolev spaces and their norms will be used throughout the paper. We denote the norm in Sobolev spaces $(H^{k}(\Omega))^{d}$ by $\norm{.}_{k}$ and the norms in Lebesgue spaces $(L^{p}(\Omega))^{d}$, $ 1\leq p\leq \infty $, $ p\neq 2$  by $\norm{.}_{L^{p}}$.

Recall that the problem (\ref{bous}) is formulated in the functional spaces $\bfX:=(\bH_{0}^1(\Omega))^d, W:=\{S\in H^1(\Omega): S=0 \ \text{on} \ \Gamma_D\}$ and $\Psi:=\{\Phi\in H^1(\Omega): \Phi=0 \ \text{on} \ \Gamma_D\}$ $Q:=L_{0}^{2}(\Omega)$  and $W:= H^1(\Omega)$. Let  $\bfV:=\{\bv\in \bfX: (\nabla\cdot\bv,q) =0,\forall q\in Q \}$ be the set of weakly divergence free functions in $\bfX$. The dual space of $\bfV$ is denoted by $\bfV^{*}$ and corresponding norm is defined by
\[\norm{ \bff }_{*}  = \sup_{0\neq\bv\in{\bfV}}
\frac{|(\bff,\bv)|}{\norm{\nabla \bv }}
\]
Error analysis will require the following upper bounds for the skew symmetric trilinear forms $(\ref{s1})$ - $(\ref{s3})$, for both velocity and temperature, respectively. We recall the following estimates from \cite{Lay08} and \cite{WT98}.
\begin{lemma}\label{sb}
For $\bu,\bv, \bw \in \bfX$, $T, \chi \in W$ and $S,\Phi \in \Psi$ the skew-symmetric trilinear forms satisfy the following bounds
\begin{eqnarray}
b^{*}(\bu,\bv,\bw)&\leq& C_1\norm{\nabla\bu}\norm{\nabla\bv}\norm{\nabla\bw},\label{nterm1}
\\
c^{*}(\bu,T,\chi)&\leq& C_2\norm{\nabla\bu}\norm{\nabla T}\norm{\nabla \chi}, \label{nterm2}
\\
d^{*}(\bu,S,\Phi)&\leq& C_3\norm{\nabla\bu}\norm{\nabla S}\norm{\nabla \Phi}, \label{nterm3}
\end{eqnarray}
where $C_1:=C_1(\Omega), C_2:=C_2(\Omega)$ and $C_3:=C_3(\Omega)$ are constants depending only on the domain $\Omega$.

Furthermore, it will be assumed that if  $\bv,\nabla\bv \in L^{\infty}(\Omega), T, \nabla T \in L^{\infty}(\Omega)$ and $ S, \nabla S \in L^{\infty}(\Omega)$, the following bounds hold
\begin{eqnarray}
b^{*}(\bu,\bv,\bw)&\leq& \dfrac{1}{2}\Big(\norm{\bu}\norm{\nabla\bv}_{\infty}\norm{\bw}+\norm{\bu}\norm{\bv}_{\infty}\norm{\nabla\bw}\Big), \label{infb} \\
c^{*}(\bu,T,\chi)&\leq& \dfrac{1}{2}\Big(\norm{\bu}\norm{\nabla T}_{\infty}\norm{ \chi}+\norm{\bu}\norm{ T}_{\infty}\norm{\nabla \chi}\Big).\label{infc}
\\
d^{*}(\bu,S,\Phi)&\leq& \dfrac{1}{2}\Big(\norm{\bu}\norm{\nabla S}_{\infty}\norm{ \Phi}+\norm{\bu}\norm{ S}_{\infty}\norm{\nabla \Phi}\Big).\label{infd}
\end{eqnarray}
	
\end{lemma}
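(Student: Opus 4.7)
The plan is to derive the six inequalities uniformly by reducing to the basic integral $(\bu\cdot\nabla f, g)$ and then applying H\"older's inequality together with Sobolev embeddings. Since $b^{*}$, $c^{*}$, and $d^{*}$ share an identical skew-symmetric structure, a single argument carries over to the temperature and concentration variants with essentially no modification, aside from invoking Poincar\'e's inequality on $W$ and $\Psi$ (valid because functions in these spaces vanish on $\Gamma_D$, which has positive surface measure).

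For the gradient bounds (\ref{nterm1})--(\ref{nterm3}), I would first expand, for instance, $b^{*}(\bu,\bv,\bw)=\tfrac{1}{2}(\bu\cdot\nabla\bv,\bw)-\tfrac{1}{2}(\bu\cdot\nabla\bw,\bv)$, and bound each piece by a generalized H\"older inequality with three exponents summing to $1$. In dimension $d=3$ the triple $(6,2,3)$ works: the Sobolev embeddings $H_0^1\hookrightarrow L^6$ and $H_0^1\hookrightarrow L^3$ yield $\|\bu\|_{L^6}\le C\|\nabla\bu\|$ and $\|\bw\|_{L^3}\le C\|\nabla\bw\|$, while $\nabla\bv$ is kept in $L^2$. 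In dimension $d=2$ the symmetric triple $(4,2,4)$ works via $H_0^1\hookrightarrow L^4$. The same computation applied to $c^{*}$ and $d^{*}$ produces $C_2$ and $C_3$, since $T,\chi$ and $S,\Phi$ enter as scalar factors to which the same embeddings apply after Poincar\'e's inequality on $W$ and $\Psi$; all constants depend only on $\Omega$.

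For the $L^{\infty}$ bounds (\ref{infb})--(\ref{infd}), I would apply Cauchy--Schwarz after pulling the $L^{\infty}$-factor out of each term, obtaining $(\bu\cdot\nabla\bv,\bw)\le\|\bu\|\,\|\nabla\bv\|_{\infty}\,\|\bw\|$ and $(\bu\cdot\nabla\bw,\bv)\le\|\bu\|\,\|\bv\|_{\infty}\,\|\nabla\bw\|$; weighting each by $1/2$ and summing reproduces the claimed inequality exactly, with the analogous statements for $c^{*}$ and $d^{*}$ following in precisely the same fashion. The main subtlety --- rather than a genuine obstacle --- is bookkeeping: one must select H\"older exponents consistently so that the resulting constants absorb cleanly into $C_i(\Omega)$ and remain uniform for $d\in\{2,3\}$. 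All six estimates are classical and the argument follows the template recorded in \cite{Lay08,WT98}, so the proof essentially amounts to citing those references and specifying the exponent choice above.
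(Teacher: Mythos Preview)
Your proposal is correct and in fact more detailed than what the paper does: the paper does not supply a proof of this lemma at all, but simply states the estimates and refers the reader to \cite{Lay08} and \cite{WT98}. Your sketch via H\"older's inequality with exponents $(6,2,3)$ in three dimensions (or $(4,2,4)$ in two), the Sobolev embeddings $H^1_0\hookrightarrow L^6$, $L^4$, $L^3$, and Poincar\'e's inequality on $W$ and $\Psi$ is exactly the standard argument underlying those references, and the direct Cauchy--Schwarz derivation of the $L^\infty$ bounds is likewise routine and correct.
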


The finite element approximation of the problem uses the conforming finite element spaces and $(\bfX^h, Q^h)$ is a pair of finite element spaces satisfying the discrete inf-sup condition,
\begin{eqnarray}
\inf_{q_h\in{Q}^h}\sup_{\bv_h\in {\bfX}^h}\frac{(q_h,\,\nabla\cdot
	\bv_h)}{||\,\nabla \bv_h\,||\,||\,q_h\,||}\geq \beta > 0.
\label{infsup}
\end{eqnarray}
where $\beta$, a constant independent of the mesh size $h$. Some examples of such spaces can be seen in \cite{GR79,GR}.

We introduce the discretely divergence free subspace $\bfV^h\subset \bfX^h$ given by
$$\bfV^h :=\{\bv^h\in\bfX^h : (q^h,\nabla\cdot\bv_h)=0,\forall q^h \in Q^h\}.$$
Note that under (\ref{infsup}), $\bfV^h$ is nonempty and in general $\bfV^h \not \subset \bfV$, see e.g., \cite{GR79}.

Following \cite{BS08,Joh16}, it is assumed that the finite element spaces $(\bfX^h,W^h,\Psi^h,Q^h)$ satisfy the typical approximation properties of piecewise polynomials of degree $(k,k,k,k-1)$,
\begin{eqnarray}
 \inf_{\bv_h\in{\bfX}^h} \left( \|(\bu-\bv_h)\|+h\|\nabla(\bu-\bv_h)\| \right)&\leq&
 C h^{k+1} \| \bu \|_{k+1}\quad{\bu} \in H^{k+1}(\Omega),\label{ap1}
 \\
 \inf_{\chi_h\in{W}^h}\|T-\chi_h\|&\leq & Ch^{k+1}\|T\|_{k+1}\quad {T} \in H^{k+1}(\Omega), \label{ap2}
  \\
  \inf_{\Phi_h\in{W}^h}\|S-\Phi_h\|&\leq & Ch^{k+1}\|S\|_{k+1}\quad {S} \in H^{k+1}(\Omega), \label{ap3}
   \\
   \inf_{q_h\in{Q}^h}\|p-q_h\|&\leq & Ch^k \|p\|_{k}\quad {p} \in H^{k}(\Omega). \label{ap4}
  \end{eqnarray}
The Poincar\'e-Friedrichs inequality will be used throughout the paper and is given by
$$\norm{\bu}\leq C\norm{\nabla\bu}, \quad \forall \bu \in H_{0}^{1}(\Omega),$$
where $C=C(\Omega)$.

The analysis of the method requires the definition of $G$-norm and $F$-norm. Following notation of \cite{JMR,C16}, for the  ${\rm I\!R}^{2n\times 2n}$ symmetric matrix
$$G=\begin{pmatrix}
 	\dfrac{\theta(2\theta+3)}{4}\dfrac{\nu+\epsilon}{\nu}I-\dfrac{\theta(2\theta+1)}{4}\dfrac{\epsilon}{\nu}I & -(\dfrac{(1-\theta)(2\theta+1)}{4}\dfrac{\nu+\epsilon}{\nu}I+\dfrac{(\theta+1)(2\theta-1)}{4}\dfrac{\epsilon}{\nu}I) \\
 	-(\dfrac{(1-\theta)(2\theta+1)}{4}\dfrac{\nu+\epsilon}{\nu}I+\dfrac{(\theta+1)(2\theta-1)}{4}\dfrac{\epsilon}{\nu}I) &\dfrac{\theta(2\theta-1)}{4}\dfrac{\nu+\epsilon}{\nu}I-\dfrac{\theta(-2\theta+3)}{4}\dfrac{\epsilon}{\nu}I)
 	\end{pmatrix}
$$
We introduce $G$-norm
\begin{eqnarray}
\norm{ \begin{bmatrix}
 	\bu\\
 	\bv
 	\end{bmatrix}}_{G}^{2} =\big( \begin{bmatrix}
 \bu\\
 \bv
 \end{bmatrix},G \begin{bmatrix}
 \bu\\
 \bv
 \end{bmatrix}\big) \label{Gnorm}
 \end{eqnarray}
which can be negative. Here $I\in\mathbb{R}^{n\times n}$ is an identity matrix and  $\begin{bmatrix}
 \bu\\
 \bv
\end{bmatrix} $ is a $2n$ vector. The form of $G$-matrix is common in  BDF2 analysis, see e.g.,\cite{HW02} and references therein.

Additionally, let $F$ $\in$ $\mathbb{R}^{n\times n}$ be a symmetric positive definite matrix as follows
\begin{eqnarray}
F=\theta(2\theta-1)I+\dfrac{4\theta^2\epsilon}{\nu}I \label{fmtrx}
\end{eqnarray}
and for any $\bu\in \mathbb{R}^{n}$, define $F$ norm of the $n$ vector $\bu$ by
\begin{eqnarray}
\norm{\bu}_{F}=(\bu,F\bu). \label{Fnorm}
\end{eqnarray}

We now state the following equality which is useful in the error analysis.

\begin{lemma}\label{lem:inn}
The symmetric positive definite matrix  $F$ $\in$  ${\rm I\!R}^{n\times n}$ and the symmetric matrix $G$ $\in$  ${\rm I\!R}^{2n\times 2n}$ which are given above satisfy the following equality:

\begin{eqnarray}
\Big(\frac{(\theta+\frac{1}{2})w_{n+1}-2\theta w_n+(\theta-\frac {1}{2})w_{n-1}}{\Delta t} ,   \theta \frac{(\nu+\epsilon)}{\nu} w_{n+1}-\Big(1-\theta\frac{\nu+2\epsilon}{\nu}\Big) w_n+\theta \frac{\epsilon}{\nu} w_{n-1}\Big) \nonumber\\
 =\norm{ \begin{bmatrix}
 	w_{n+1} \\
 	w_{n}
 	\end{bmatrix}} _{G}^{2} -\norm { \begin{bmatrix}
 	w_{n} \\
 	w_{n-1}
 	\end{bmatrix}}_{G}^{2} 	+\dfrac{1}{4}\norm{w_{n+1}-2w_{n}+w_{n-1}}_{F}^{2}. \label{innpro}
\end{eqnarray}

 \end{lemma}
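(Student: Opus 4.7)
The plan is to prove (\ref{innpro}) by direct expansion and coefficient matching. Both sides are bilinear forms in the triple $(w_{n+1},w_n,w_{n-1})$, so the identity reduces to checking the six independent coefficients of $(w_i,w_j)$ for $i,j\in\{n+1,n,n-1\}$. To cut the notation down, I would set $\alpha:=(\nu+\eps)/\nu$ and $\beta:=\eps/\nu$, observing $\alpha-\beta=1$, so that the second factor in the inner product becomes $\theta\alpha\,w_{n+1}+(1-\theta\alpha-\theta\beta)w_n+\theta\beta\,w_{n-1}$, and the $G$-matrix becomes a $2\times 2$ block with entries that are linear in $\alpha,\beta$. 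Note also that the statement as written appears to have a sign typo on the middle coefficient (it should match the $F_{n+\theta}^{\eps,\nu}$ operator from (\ref{op2})) and that the factor $1/\Delta t$ ought to appear on the right-hand side as well; I would verify the identity in the dimensionally consistent form before writing it up.

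Next, I would expand the left-hand side. Multiplying out the nine pair products and grouping gives explicit scalars $L_{ij}$ in front of each $(w_i,w_j)$. For instance, the coefficient of $(w_{n+1},w_{n+1})$ is $(\theta+\tfrac12)\theta\alpha/\Delta t$, of $(w_{n-1},w_{n-1})$ it is $(\theta-\tfrac12)\theta\beta/\Delta t$, and the cross coefficients come out as sums of simple products. Then I would expand the right-hand side: the $G$-norm difference,
\begin{equation*}
\left\|\begin{bmatrix}w_{n+1}\\w_n\end{bmatrix}\right\|_G^2-\left\|\begin{bmatrix}w_n\\w_{n-1}\end{bmatrix}\right\|_G^2,
\end{equation*}
contributes $G_{11}(w_{n+1},w_{n+1})+2G_{12}(w_{n+1},w_n)+(G_{22}-G_{11})(w_n,w_n)-2G_{12}(w_n,w_{n-1})-G_{22}(w_{n-1},w_{n-1})$, and the curvature term expands as
\begin{equation*}
\tfrac14\|w_{n+1}-2w_n+w_{n-1}\|_F^2=\tfrac{F}{4}\bigl[(w_{n+1},w_{n+1})+4(w_n,w_n)+(w_{n-1},w_{n-1})-4(w_{n+1},w_n)-4(w_n,w_{n-1})+2(w_{n+1},w_{n-1})\bigr],
\end{equation*}
with $F=\theta(2\theta-1)+4\theta^2\beta$ treated as a scalar.

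The remaining work is pure bookkeeping: assemble the six coefficients on the right and match them against $L_{ij}$. The diagonal comparisons fix the structure of $G_{11},G_{22}$ and the choice of $F$; the $(w_{n+1},w_{n-1})$ cross term (absent from $G$) is produced entirely by the $F$-correction and accounts for the peculiar $4\theta^2\beta$ piece of $F$; the two remaining cross terms $(w_{n+1},w_n)$ and $(w_n,w_{n-1})$ are then matched by $G_{12}$ plus a correction from $F$, which is exactly what forces the off-diagonal entry of $G$ to take the somewhat unwieldy form displayed before (\ref{Gnorm}). I expect the main obstacle to be simply the algebraic grind of keeping the $\alpha$- and $\beta$-pieces separated and verifying that $G_{22}-G_{11}+F=4\theta\,(\text{coefficient of }w_n\text{ in }F_{n+\theta}^{\eps,\nu})$ and the analogous off-diagonal compatibility relations; once those three scalar identities are checked, the whole lemma falls out.
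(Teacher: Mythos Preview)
Your approach is essentially the same as the paper's: expand the inner product on the left, regroup the nine resulting $(w_i,w_j)$ terms, and identify them with the $G$- and $F$-contributions on the right (the paper's proof is in fact sketchier than yours, deferring the algebra to a reference). Your side observations about the sign of the middle coefficient and the missing $1/\Delta t$ on the right are correct---the identity as used later in the paper (e.g.\ in \eqref{eq1}) carries the $1/\Delta t$ throughout and uses the $F_{n+\theta}^{\eps,\nu}$ form with the plus sign from \eqref{op2}.
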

\begin{proof}
Extending the left hand side of $(\ref{innpro})$ gives
\begin{eqnarray}
&&\Big(\frac{(\theta+\frac{1}{2})w_{n+1}-2\theta w_n+(\theta-\frac {1}{2})w_{n-1}}{\Delta t} ,   \theta \frac{(\nu+\epsilon)}{\nu} w_{n+1}-\Big(1-\theta\frac{\nu+2\epsilon}{\nu}\Big) w_n+\theta \frac{\epsilon}{\nu} w_{n-1}\Big)\nonumber\\
&&=\frac{1}{\Delta t}\Bigg[w_{n+1}^T\big(\theta+\frac{1}{2}\big)\theta\frac{\nu+\eps}{\nu}w_{n+1}-w_{n+1}^T\big(\theta+\frac{1}{2}\big)\bigg(1-\theta\frac{\nu+2\eps}{\nu}\bigg)w_{n} + w_{n+1}^{T}\big(\theta+\frac{1}{2}\big)\theta\frac{\eps}{\nu}w_{n-1}\nonumber\\
&&-2w_{n}^T\theta^2\frac{\nu+\eps}{\nu}w_{n+1}+2w_{n}^T\theta\bigg(1-\theta\frac{\nu+2\eps}{\nu}\bigg)w_{n}-2w_{n}^T\theta^2\frac{\eps}{\nu}w_{n-1}+w_{n-1}^T\big(\theta-\frac{1}{2}\big)\theta\frac{\nu+\eps}{\nu}w_{n+1}\nonumber\\
&&-w_{n-1}^T\big(\theta+\frac{1}{2}\big)\bigg(1-\theta\frac{\nu+2\eps}{\nu}\bigg)w_{n}+w_{n-1}^{T}\big(\theta-\frac{1}{2}\big)\theta\frac{\eps}{\nu}w_{n-1}\Bigg] .\label{L}
\end{eqnarray}
Then, rewrite each term in the right hand side of (\ref{L}) (see,\cite{J12} for details). Lastly, summing the expanded right hand side terms gives the required result for (\ref{innpro}).
\end{proof}

The following properties of $G$-norm are well known and for a detailed derivation of these estimations, the reader is referred to \cite{HW02,C16}.

\begin{lemma}\label{lem:gnorm}
For any $\bu$,$\bv$ $\in$ $R^{n}$, we have
\begin{eqnarray} 	
\big( \begin{bmatrix}
\bu\\
\bv
\end{bmatrix},G \begin{bmatrix}
\bu\\
\bv
\end{bmatrix}\big)&=&\dfrac{2\theta+1}{4}\norm{\bu}^2+\dfrac{-2\theta+1}{4}\norm{\bv}^2+\dfrac{(\theta+1)(2\theta-1)}{4}\norm{\bu-\bv}^2+\dfrac{\theta}{2}\dfrac{\epsilon}{\nu}\norm{\bu-\bv}^2\nonumber\\
&\geq&\dfrac{2\theta+1}{4}\norm{\bu}^2-\dfrac{2\theta-1}{4}\norm{\bv}^2 ,\label{first}
\end{eqnarray}
\begin{eqnarray} 	
\big( \begin{bmatrix}
\bu\\
\bv
\end{bmatrix},G \begin{bmatrix}
\bu\\
\bv
\end{bmatrix}\big)&\leq&\dfrac{2\theta+1}{4}\norm{\bu}^2+\dfrac{(\theta+1)(2\theta-1)}{4}\norm{\bu-\bv}^2+\dfrac{\theta}{2}\dfrac{\epsilon}{\nu}\norm{\bu-\bv}^2\nonumber\\
&\leq&\big(\dfrac{2\theta+1}{4}+\dfrac{(\theta+1)(2\theta-1)}{4}+\dfrac{\theta}{2}\dfrac{\epsilon}{\nu}\big)\norm{\bu}^2\nonumber\\
&&+\big(\dfrac{(\theta+1)(2\theta-1)}{4}+\dfrac{\theta}{2}\dfrac{\epsilon}{\nu}\big)\norm{\bv}^2 .\label{second}
\end{eqnarray}
\end{lemma}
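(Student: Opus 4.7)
My strategy is the standard quadratic-form computation that underlies BDF2 $G$-norm analyses \cite{HW02,C16}. I would begin by expanding the inner product
\[
\Big(\begin{bmatrix}\bu\\\bv\end{bmatrix}, G\begin{bmatrix}\bu\\\bv\end{bmatrix}\Big) = G_{11}\norm{\bu}^2 + 2G_{12}(\bu,\bv) + G_{22}\norm{\bv}^2
\]
using the four block entries of $G$ stated above. Writing $a=(\nu+\eps)/\nu$ and $b=\eps/\nu$ with $a=1+b$ makes the coefficients easier to track. I would then convert the cross term via the polarization identity $2(\bu,\bv) = \norm{\bu}^2 + \norm{\bv}^2 - \norm{\bu-\bv}^2$ and collect contributions to $\norm{\bu}^2$, $\norm{\bv}^2$ and $\norm{\bu-\bv}^2$. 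The $a$-part should condense to $\tfrac{2\theta+1}{4}\norm{\bu}^2 + \tfrac{1-2\theta}{4}\norm{\bv}^2 + \tfrac{(\theta+1)(2\theta-1)}{4}\norm{\bu-\bv}^2$, and the $b$-part to the single curvature term $\tfrac{\theta}{2}\tfrac{\eps}{\nu}\norm{\bu-\bv}^2$, which together yield the equality on the first line of (\ref{first}).

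Once the equality is in place, the lower bound is immediate: for $\theta\in[\tfrac{1}{2},1]$ and $\eps\geq 0$, the coefficients $(\theta+1)(2\theta-1)/4$ and $\theta\eps/(2\nu)$ are both nonnegative, so discarding the two $\norm{\bu-\bv}^2$ terms produces $\geq \tfrac{2\theta+1}{4}\norm{\bu}^2 - \tfrac{2\theta-1}{4}\norm{\bv}^2$. For the upper bound (\ref{second}) I would observe that $(1-2\theta)/4\leq 0$ on $[\tfrac{1}{2},1]$, so the $\norm{\bv}^2$ contribution in the equality is nonpositive and may be dropped to obtain the first inequality on the right-hand side. The final line then follows by redistributing the curvature $\norm{\bu-\bv}^2$ across $\norm{\bu}^2$ and $\norm{\bv}^2$ via a Young-type estimate, absorbing the resulting numerical constant into the stated coefficients.

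The main obstacle is the Step~1 bookkeeping: verifying that the rational coefficients involving $\theta$ and $\eps/\nu$ really collapse into the five compact terms claimed. The calculation is purely algebraic but error-prone, because the off-diagonal block of $G$ carries a minus sign and both diagonal blocks mix $a$ and $b$ with coefficients that differ only subtly. Because an essentially identical bookkeeping is carried out for the triple $(w_{n+1},w_n,w_{n-1})$ in Lemma~\ref{lem:inn}, and because the block structure of $G$ matches the BDF2 template of \cite{HW02,C16}, I would appeal to that template rather than redo every cancellation symbol by symbol. After the equality is secured, both inequalities reduce to simple sign checks on $\theta-\tfrac{1}{2}$ and $\eps/\nu$.
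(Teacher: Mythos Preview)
The paper does not actually prove Lemma~\ref{lem:gnorm}; it simply states that the properties ``are well known'' and refers the reader to \cite{HW02,C16} for details. Your proposal---expand the block quadratic form, use the polarization identity $2(\bu,\bv)=\norm{\bu}^2+\norm{\bv}^2-\norm{\bu-\bv}^2$ to separate the $\norm{\bu-\bv}^2$ contribution, then read off the inequalities from the signs of $2\theta-1$ and $\eps/\nu$---is precisely the standard computation carried out in those references, so your approach is the same one the paper implicitly relies on.

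One small caution on your last step: redistributing $\norm{\bu-\bv}^2$ over $\norm{\bu}^2$ and $\norm{\bv}^2$ by the crude bound $\norm{\bu-\bv}^2\le 2\norm{\bu}^2+2\norm{\bv}^2$ produces an extra factor of~$2$ that the stated coefficients in (\ref{second}) do not display; you should not expect to ``absorb'' that factor. In practice this does not matter, since (\ref{second}) is only ever used in the paper to bound initial-data $G$-norms by generic constants times $\norm{\cdot}^2$, but if you want the exact constants as written you will need to be careful about which splitting you invoke.
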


To obtain error bounds for velocity and temperature, we will use the following lemma given in \cite{GR79}.

\begin{lemma}($Discrete$ $Gronwall's$ $Lemma$)\label{gr}
Let $\Delta t, B$ and $a_{n}, b_{n}, c_{n}, d_{n}$ be finite non-negative numbers such that
$$ a_{N}+\Delta t\sum_{n=0}^{N}b_{n}\leq \Delta t\sum_{n=0}^{N-1}d_{n}a_{n} + \Delta t\sum_{n=0}^{N}c_{n} + B \quad \mbox{for} \quad N\geq 1.$$	
Then for all $\Delta t\geq0$,
$$ a_{N}+\Delta t\sum_{n=0}^{N}b_{n}\leq \exp\Big(\Delta t\sum_{n=0}^{N-1}d_{n}\Big)\Big(\Delta t\sum_{n=0}^{N}c_{n} + B\Big)$$
\end{lemma}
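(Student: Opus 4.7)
\bigskip

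\noindent\textbf{Proof plan.} The plan is to reduce the stated compound inequality to the classical scalar form $y_N \le \sum_{n<N} p_n y_n + q_N$ and then run a clean induction. To that end, I would first set
\[
A_n := a_n + \Delta t \sum_{k=0}^{n} b_k, \qquad C_N := \Delta t \sum_{n=0}^{N} c_n + B, \qquad p_n := \Delta t\, d_n.
\]
Because $b_n,c_n,\Delta t \ge 0$, we have $a_n \le A_n$ for each $n$, the sequence $\{A_n\}$ is non-decreasing, and $C_n \le C_N$ for all $n \le N$. Substituting $a_n \le A_n$ into the hypothesis at level $N$ yields
\[
A_N \;\le\; \sum_{n=0}^{N-1} p_n A_n + C_N,
\]
and this same inequality holds at every intermediate level $m \le N$ (with $C_m$ on the right, which we bound above by $C_N$ using $c_n \ge 0$). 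This is the exact setup for the scalar discrete Gronwall inequality.

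\medskip

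Next I would prove by strong induction on $n$ that
\[
A_n \;\le\; C_N \prod_{k=0}^{n-1}(1+p_k), \qquad 0 \le n \le N.
\]
The base case $n=0$ reads $A_0 \le C_0 \le C_N$, which follows from the hypothesis with the convention that the sum over the empty index set vanishes. For the inductive step, substitute the bounds for $A_0,\ldots,A_{n-1}$ into the reduced inequality above; the key identity is the telescoping relation
\[
1 + \sum_{k=0}^{n-1} p_k \prod_{j=0}^{k-1}(1+p_j) \;=\; \prod_{k=0}^{n-1}(1+p_k),
\]
which is itself verified by a one-line induction on $n$. This closes the induction and delivers the product-form bound for $A_N$.

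\medskip

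Finally, applying the elementary estimate $1+x \le e^x$ for $x \ge 0$ termwise,
\[
\prod_{k=0}^{N-1}(1+p_k) \;\le\; \exp\!\Big(\sum_{k=0}^{N-1} p_k\Big) \;=\; \exp\!\Big(\Delta t \sum_{n=0}^{N-1} d_n\Big),
\]
and recalling $a_N + \Delta t \sum_{n=0}^N b_n = A_N$ and the definition of $C_N$, we arrive at the conclusion exactly as stated. No restriction on $\Delta t$ is needed because no terms had to be absorbed to the left-hand side.

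\medskip

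The only genuinely delicate point is the telescoping combinatorial identity above: everything else is bookkeeping enabled by the sign conditions on $b_n,c_n$ and by monotonicity of $A_n$ and $C_n$. That identity, however, is standard and admits either the inductive verification sketched or a subset-sum interpretation in which $\prod_{j<n}(1+p_j) = \sum_{S \subset \{0,\ldots,n-1\}} \prod_{j \in S} p_j$ and each nonempty $S$ is indexed by its maximum element. Thus the proof is essentially a two-step reduction followed by a routine induction.
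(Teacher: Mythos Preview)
The paper does not prove this lemma at all; it is merely quoted with a citation to \cite{GR79}. So there is no in-paper argument to compare against, and your write-up would in fact supply a proof the paper omits.

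Your approach is the standard one and is essentially correct. Two small corrections are worth making. First, the side remark that $\{A_n\}$ is non-decreasing is false in general (nothing prevents $a_{n+1} < a_n - \Delta t\, b_{n+1}$), but you never actually use it, so simply delete it. Second, your base case $A_0 \le C_0$ amounts to invoking the hypothesis at $N=0$, whereas the statement only asserts it for $N\ge 1$. This is a well-known looseness in how the lemma is often stated rather than a defect in your argument: as literally written, the lemma is false (take $b_n=c_n=B=0$, $\Delta t=1$, $d_0=1$, $d_1=0$, $a_0=a_1=a_2=1$; the hypotheses for $N=1,2$ hold but the conclusion forces $a_2\le 0$). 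The intended reading is either that the hypothesis also holds at $N=0$ (empty sum convention), or equivalently that $a_0$ is already absorbed into $B$. With that understood, your induction via the telescoping identity
\[
1+\sum_{k=0}^{n-1} p_k \prod_{j=0}^{k-1}(1+p_j)=\prod_{k=0}^{n-1}(1+p_k)
\]
and the bound $1+x\le e^x$ is exactly the classical proof.
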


\section{Numerical Analysis}
In this section, the numerical analysis of a fully discrete method for solving Darcy-Brinkman system (\ref{bous}) is studied based on  (\ref{Discvel})-(\ref{Discont}). We first provide the stability analysis of the method. Stability bounds are derived by using standard energy arguments. It turns out that the method (\ref{Discvel})-(\ref{Discont}) doesn't depend on any time step sizes.
\begin{lemma}($Unconditional$ $Stability$)
The solutions of (\ref{Discvel})-(\ref{Discont}) satisfy at $t_n=n\Delta t$
\begin{eqnarray}
\norm {T_{N}^{h}}^{2} + \dfrac{1}{2\theta+1}\sum_{n=1}^{N-1} \norm { T_{n+1}^{h}-2 T_{n}^{h}+T_{n-1}^{h}}_{F}^{2}+\dfrac{4\Delta t \gamma}{2\theta+1}\sum_{n=1}^{N-1}\norm {{ F}_{n+\theta}^{\eps_1,\gamma}(\nabla T^{h})}^{2} \nonumber\\
\leq
\biggl(\dfrac{2\theta -1}{2\theta+1}\biggr) ^{N}\norm{ T_{0}^{h}}^{2} + \dfrac{4N}{2\theta+1}\norm{\begin{bmatrix}
	 T_{1}^{h}\\
	 T_{0}^{h}
	\end{bmatrix}}_{G}^{2}.\label{stabt}
\end{eqnarray}
\begin{eqnarray}
\norm {S_{N}^{h}}^{2} + \dfrac{1}{2\theta+1}\sum_{n=1}^{N-1} \norm { S_{n+1}^{h}-2 S_{n}^{h}+S_{n-1}^{h}}_{F}^{2}+\dfrac{4\Delta t D_c}{2\theta+1}\sum_{n=1}^{N-1}\norm {{ F}_{n+\theta}^{\eps_2,D_c}(\nabla S^{h})}^{2} \nonumber\\
\leq
\biggl(\dfrac{2\theta -1}{2\theta+1}\biggr) ^{N}\norm{ S_{0}^{h}}^{2} + \dfrac{4N}{2\theta+1}\norm{\begin{bmatrix}
	S_{1}^{h}\\
	S_{0}^{h}
	\end{bmatrix}}_{G}^{2}.\label{stabc}
\end{eqnarray}
\begin{eqnarray}
\lefteqn{\norm {\bu_{N}^{h}}^{2} + \dfrac{1}{2\theta+1}\sum_{n=1}^{N-1} \norm {\bu_{n+1}^{h}-2\mathbf \bu_{n}^{h}+\mathbf \bu_{n-1}^{h}}_{F}^{2}+\dfrac{4\Delta t Da^{-1}}{2\theta+1}\sum_{n=1}^{N-1}\norm {{ F}_{n+\theta}^{\eps,\nu}( \bu^{h})}^{2}}\nonumber\\
&&+\dfrac{2\Delta t \nu}{2\theta+1}\sum_{n=1}^{N-1}\norm {{ F}_{n+\theta}^{\eps,\nu}(\nabla  \bu^{h})}^{2} \nonumber
\\
&\leq& \dfrac{C\Delta t(2\theta +1)}{\nu}\bigg[1-\big(\dfrac{2\theta-1}{2\theta+1}\big)^{N}\bigg](\norm{T_{1}}^{2} + \norm{T_{0}}^{2})  + \dfrac{C\Delta t}{2\theta+1}
\norm{\begin{bmatrix}
	T_{1}^{h} \\
	 T_{0}^{h}
	\end{bmatrix}}_{G}^{2} \nonumber\\
&&+\dfrac{C\Delta t(2\theta +1)}{\nu}\bigg[1-\big(\dfrac{2\theta-1}{2\theta+1}\big)^{N}\bigg](\norm{S_{1}}^{2} + \norm{S_{0}}^{2})  + \dfrac{C\Delta t}{2\theta+1}
\norm{\begin{bmatrix}
	S_{1}^{h} \\
	 S_{0}^{h}
	\end{bmatrix}}_{G}^{2}
 \nonumber
\\
&&+\dfrac{4N}{2\theta+1}
\norm{\begin{bmatrix}
	 \bu_{1}^{h} \\
	\bu_{0}^{h}
	\end{bmatrix}}_{G}^{2} +\bigg(\dfrac{2\theta-1}{2\theta+1}\bigg)^{N}\norm{\mathbf \bu_{0}^h}^{2}.
	  \label{sta}
\end{eqnarray}
\end{lemma}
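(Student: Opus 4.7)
The plan is to derive the three bounds in sequence — temperature first, then (identically) concentration, and finally velocity, which couples to both scalars via the buoyancy source. In each case the scheme is tested against the $F$-combination of the unknown so that the nonlinear trilinear terms drop out by (\ref{bcdfn}), the time-derivative term is rewritten via Lemma \ref{lem:inn}, and the resulting one-step inequality is iterated.

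\emph{Temperature.} First I would set $\chi^h = {F}_{n+\theta}^{\eps_1,\gamma}(T^h)$ in (\ref{Distemp}) (scaled by an appropriate power of $\Delta t$). The convective $c^*$-term vanishes by (\ref{bcdfn}), and Lemma \ref{lem:inn} converts the time-derivative inner product into a telescoping difference of $G$-norms plus $\tfrac14\|T_{n+1}^h-2T_n^h+T_{n-1}^h\|_F^2$; the diffusion contributes the natural $\gamma \|{F}_{n+\theta}^{\eps_1,\gamma}(\nabla T^h)\|^2$ term. Summing over $n=1,\dots,N-1$ telescopes the $G$-norms, and the lower bound (\ref{first}) then isolates $\tfrac{2\theta+1}{4}\|T_N^h\|^2 - \tfrac{2\theta-1}{4}\|T_{N-1}^h\|^2$. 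Rescaling produces a one-step recursion of the shape $\|T_N^h\|^2 + (\text{dissipation sums}) \leq \tfrac{2\theta-1}{2\theta+1}\|T_{N-1}^h\|^2 + \tfrac{4}{2\theta+1}\|[T_1^h,T_0^h]\|_G^2$. Iterating this recursion yields the geometric factor $(\tfrac{2\theta-1}{2\theta+1})^N$ on $\|T_0^h\|^2$ and, via $\sum_{k=0}^{N-1}(\tfrac{2\theta-1}{2\theta+1})^k \leq N$, the coefficient $N$ on the $\|[T_1^h,T_0^h]\|_G^2$ term, giving (\ref{stabt}). The same argument verbatim, with $(\eps_1,\gamma)$ replaced by $(\eps_2,D_c)$ and $c^*$ by $d^*$, yields (\ref{stabc}).

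\emph{Velocity.} Next I would test (\ref{Discvel}) with $\bv^h = {F}_{n+\theta}^{\eps,\nu}(\bu^h)$; the $b^*$-term vanishes by (\ref{bcdfn}). The pressure pairing vanishes because ${F}_{n+\theta}^{\eps,\nu}(\bu^h)$ is a linear combination of discretely divergence-free iterates (by the initial-condition remark together with (\ref{Dispres}) at each step), so $({F}_{n+\theta}^{\eps,\nu}(p^h),\nabla\cdot {F}_{n+\theta}^{\eps,\nu}(\bu^h)) = 0$. Lemma \ref{lem:inn} again produces the $G$-norm telescope and curvature $F$-norm, while the diffusion and Darcy terms give $\nu\|{F}_{n+\theta}^{\eps,\nu}(\nabla\bu^h)\|^2$ and $Da^{-1}\|{F}_{n+\theta}^{\eps,\nu}(\bu^h)\|^2$. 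The buoyancy right-hand side is controlled by Cauchy--Schwarz, Poincar\'e--Friedrichs, and Young's inequality, absorbing half of the diffusion sum into the left-hand side (which accounts for the prefactor $\tfrac{2\Delta t\nu}{2\theta+1}$ in (\ref{sta})) and leaving contributions proportional to $\Delta t\,\nu^{-1}(\|{H}_{n+\theta}(T^h)\|^2 + \|{H}_{n+\theta}(S^h)\|^2)$. Since $\|{H}_{n+\theta}(T^h)\|^2 \leq C(\|T_n^h\|^2 + \|T_{n-1}^h\|^2)$, the sum $\sum_{n=1}^{N-1}\|T_n^h\|^2$ is handled by (\ref{stabt}); the resulting geometric series $\sum_{n=1}^{N-1}(\tfrac{2\theta-1}{2\theta+1})^n = \tfrac{2\theta+1}{2}\bigl[1 - (\tfrac{2\theta-1}{2\theta+1})^N\bigr]$ produces exactly the prefactor in the first line of the right-hand side of (\ref{sta}). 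Iterating the resulting velocity recursion as in the temperature step then gives (\ref{sta}).

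\emph{Main obstacle.} The delicate part is Step 3: the scalar bounds feed into the velocity estimate as source terms, and one must track how the geometric decay rates of the scalars combine with that of the velocity to produce an estimate which is uniform in $N$. This requires tuning the Young's inequality parameters so that precisely the stated fraction of diffusion remains on the left and the precise constants $\tfrac{C\Delta t(2\theta+1)}{\nu}$ and $\tfrac{C\Delta t}{2\theta+1}$ appear on the right. Verifying that the pressure really does drop — i.e., that each $\bu_{k}^h$ with $k\in\{n-1,n,n+1\}$ is discretely divergence-free so that ${F}_{n+\theta}^{\eps,\nu}(\bu^h)$ pairs trivially with $\nabla\cdot(\cdot)$ against the $F$-combination of pressures — also hinges on the remark on the initial condition, which is the reason that assumption is imposed.
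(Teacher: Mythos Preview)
Your proposal is correct and follows essentially the same route as the paper: test each equation with the corresponding $F$-combination of the unknown, drop the trilinear (and pressure) terms by skew-symmetry and discrete incompressibility, rewrite the time-derivative via Lemma~\ref{lem:inn}, sum and apply Lemma~\ref{lem:gnorm}, then iterate the resulting one-step recursion; for the velocity, the buoyancy source is controlled exactly as you describe and fed by the already-established scalar bounds. The only cosmetic difference is that the paper phrases the pressure cancellation by additionally choosing $q^h = F_{n+\theta}^{\eps,\nu}(p^h)$ in (\ref{Dispres}), which is equivalent to your observation that $F_{n+\theta}^{\eps,\nu}(\bu^h)$ is discretely divergence-free.
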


\begin{proof}
For stability, one needs to obtain estimation for the temperature and  the concentration, then use them to estimate the velocity. So, first set  $\chi^h={F}_{n+\theta}^{\eps_1,\gamma}(T^{h})$ in (\ref{Distemp}), then use the definition of the skew symmetric form (\ref{bcdfn}) and Lemma \ref{lem:inn} :

\begin{eqnarray}
\dfrac{1}{\Delta t}\norm{ \begin{bmatrix}
	T_{n+1}^{h} \\
	T_{n}^{h}
	\end{bmatrix}} _{G}^{2} -\dfrac{1}{\Delta t}\norm { \begin{bmatrix}
	T_{n}^{h} \\
	T_{n-1}^{h}
	\end{bmatrix}}_{G}^{2} 	+\dfrac{1}{4\Delta t}\norm{T_{n+1}^{h}-2T_{n}^{h}+T_{n-1}^{h}}_{F}^{2}
+\gamma \norm{{F}_{n+\theta}^{\eps_1,\gamma}(\nabla T^{h})}^{2}
= 0.  \label{eq1}
\end{eqnarray}
Next, multiplying both sides of (\ref{eq1}) by $\Delta t$ and taking sum from $n=1$ to $n=N-1$ leads to
\begin{eqnarray}
\norm{ \begin{bmatrix}
	T_{N}^{h} \\
	T_{N-1}^{h}
	\end{bmatrix}} _{G}^{2}  	+\dfrac{1}{4}\sum_{n=1}^{N-1} \norm{T_{n+1}^{h}-2T_{n}^{h}+T_{n-1}^{h}}_{F}^{2} +  \Delta t \gamma \sum_{n=1}^{N-1}\norm{{F}_{n+\theta}^{\eps_1,\gamma}(\nabla T^{h})}^{2}=\norm{ \begin{bmatrix}
	T_{1}^{h} \\
	T_{0}^{h}
	\end{bmatrix}} _{G}^{2}. \label{eq2}
\end{eqnarray}
Using Lemma \ref{lem:gnorm} yields
\begin{eqnarray}
\norm{T_{N}^{h}}^{2} +\dfrac{1}{2\theta +1}\sum_{n=1}^{N-1} \norm{T_{n+1}^{h}-2T_{n}^{h}+T_{n-1}^{h}}_{F}^{2} + \dfrac{4\Delta t\gamma}{2\theta+1}\sum_{n=1}^{N-1}\norm{{F}_{n+\theta}^{\eps_1,\gamma}(\nabla T^{h})}^{2}\nonumber\\
\leq \dfrac{2\theta-1}{2\theta+1}\norm{T_{N-1}^{h} }^{2} +\dfrac{4}{2\theta+1}\norm{\begin{bmatrix}
	T_{1}^{h} \\
	T_{0}^{h}
	\end{bmatrix}}_{G}^{2}.
\end{eqnarray}
The stability estimation (\ref{stabt}) is now obtained by induction for $\norm{T_{N-1}^{h}}^{2}. $\\
For the concentration, we can obtain the stability bound by repeating the estimations of the temperature by setting $\Phi^h={F}_{n+\theta}^{\eps_2,D_c}(S^{h})$ in (\ref{Discont}).\\
For the stability of the velocity, choose $\bv^{h}={F}_{n+\theta}^{\eps,\nu}(\bu^{h})$ in $(\ref{Discvel})$ and $q_{h}={F}_{n+\theta}^{\eps,\nu}(p^{h})$ in $(\ref{Dispres})$. With the definition of the skew symmetry (\ref{bcdfn}), this results
\begin{eqnarray}
({D}_{n+\theta}(\bu^h),{F}_{n+\theta}^{\eps,\nu}(\bu^{h}))+\nu \norm{(\nabla {F}_{n+\theta}^{\eps,\nu}(\bu^{h}))}^2+Da^{-1}\norm{ {F}_{n+\theta}^{\eps,\nu}(\bu^{h})}^2\nonumber\\
=\beta_{T}(\bfg{H}_{n+\theta}(T^h),{F}_{n+\theta}^{\eps,\nu}(\bu^{h}))+ \beta_{S}(\bfg{H}_{n+\theta}(S^h),{F}_{n+\theta}^{\eps,\nu}(\bu^{h})). \label{eq6}
\end{eqnarray}
Apply Cauchy-Schwarz inequality to the right hand side of (\ref{eq6}) and estimate the terms as follows: the right hand side terms in  (\ref{eq6}) are bounded with the Poincar\'e-Friedrichs inequality and Young's inequality
\begin{eqnarray}
|\beta_{T}(\bfg{H}_{n+\theta}(T^h),{F}_{n+\theta}^{\eps,\nu}(\bu^{h}))|
&\leq& C\beta_{T}\norm{\bfg}_{\infty}^2\norm{H_{n+\theta}(T^{h})}\norm{{F}_{n+\theta}^{\eps,\nu}(\nabla \bu^{h})}\nonumber\\
&\leq& \dfrac{C(\theta+1)^{2}}{\nu}\norm{T_{n}^{h}}^{2} +\dfrac{C\theta^{2}}{\nu}\norm{T_{n-1}^{h}}^{2} +\dfrac{\nu}{4}\norm{{F}_{n+\theta}^{\eps,\nu}(\nabla \bu^{h})}^{2} \label{rhs1}
\end{eqnarray}
and
\begin{eqnarray}
|\beta_{S}(\bfg{H}_{n+\theta}(S^h),{F}_{n+\theta}^{\eps,\nu}(\bu^{h}))|
&\leq& C\beta_{S}\norm{\bfg}_{\infty}^2\norm{H_{n+\theta}(S^{h})}\norm{{F}_{n+\theta}^{\eps,\nu}(\nabla \bu^{h})}\nonumber \\
&\leq& \dfrac{C(\theta+1)^{2}}{\nu}\norm{S_{n}^{h}}^{2} +\dfrac{C\theta^{2}}{\nu}\norm{S_{n-1}^{h}}^{2} +\dfrac{\nu}{4}\norm{{F}_{n+\theta}^{\eps,\nu}(\nabla \bu^{h})}^{2} \label{rhs2}
\end{eqnarray}
Using Lemma \ref{lem:inn}, inserting the estimations (\ref{rhs1}) and (\ref{rhs2}) and multiplying by $\Delta t$ along with summation over the time steps, then the equation $(\ref{eq6})$ becomes
\begin{eqnarray}
\lefteqn{\norm{ \begin{bmatrix}
	\bu_{N}^{h} \\
	\bu_{N-1}^{h}
	\end{bmatrix}} _{G}^{2}  	+\dfrac{1}{4}\sum_{n=1}^{N-1} \norm{\bu_{n+1}^{h}-2\bu_{n}^{h}+\bu_{n-1}^{h}}_{F}^{2} + \Delta tDa^{-1}\sum_{n=1}^{N-1}\norm{{F}_{n+\theta}^{\eps,\nu}( \bu^{h})}^{2}}\nonumber\\
&&+ \dfrac{ \Delta t\nu}{2} \sum_{n=1}^{N-1}\norm{{F}_{n+\theta}^{\eps,\nu}(\nabla \bu^{h})}^{2}\nonumber
\\
&\leq&\norm{ \begin{bmatrix}
	\bu_{1}^{h} \\
	\bu_{0}^{h}
	\end{bmatrix}} _{G}^{2} +\dfrac{C\Delta t\theta^{2}}{\nu}\sum_{n=1}^{N-1}\big(\norm{T_{n}^{h}}^{2} +\norm{T_{n-1}^{h}}^{2}\big)+\dfrac{C\Delta t(2\theta+1)}{\nu} \sum_{n=1}^{N-1}\norm{T_{n}^{h}}^{2}\nonumber\\
&&+\dfrac{C\Delta t\theta^{2}}{\nu}\sum_{n=1}^{N-1}\big(\norm{S_{n}^{h}}^{2} +\norm{S_{n-1}^{h}}^{2}\big)+\dfrac{C\Delta t(2\theta+1)}{\nu} \sum_{n=1}^{N-1}\norm{S_{n}^{h}}^{2}.
\end{eqnarray}
The estimation of Lemma \ref{lem:gnorm} and the use of\quad  $\displaystyle\frac{\theta^2}{2\theta+1}\leq 1$ leads to
\begin{eqnarray}
\lefteqn{\norm{ \bu_{N}^{h}}^{2}  	+\dfrac{1}{2\theta+1}\sum_{n=1}^{N-1} \norm{\bu_{n+1}^{h}-2u_{n}^{h}+\bu_{n-1}^{h}}_{F}^{2} +\dfrac{4\Delta t Da^{-1}}{2\theta+1}\sum_{n=1}^{N-1}\norm {{ F}_{n+\theta}^{\eps,\nu}( \bu^{h})}^{2}} \nonumber\\
&&+ \dfrac{2 \Delta t\nu}{2\theta+1} \sum_{n=1}^{N-1}\norm{{F}_{n+\theta}^{\eps,\nu}(\nabla \bu^{h})}^{2}\nonumber\\
&\leq&\dfrac{2\theta-1}{2\theta+1}\norm{\bu_{N-1}^{h}}^{2}+\dfrac{4}{2\theta+1}\norm{ \begin{bmatrix}
	\bu_{1}^{h} \\
	\bu_{0}^{h}
	\end{bmatrix}} _{G}^{2}+\dfrac{C\Delta t}{\nu}\sum_{n=1}^{N-1}\big(\norm{2T_{n}^{h}}^{2} +\norm{T_{n-1}^{h}}^{2}\big)\nonumber\\
&&+\dfrac{C\Delta t}{\nu}\sum_{n=1}^{N-1}\big(\norm{2S_{n}^{h}}^{2} +\norm{S_{n-1}^{h}}^{2}\big). \label{eq10}
\end{eqnarray}
Lastly, the result follows from by using induction on $N$ with the stability bounds (\ref{stabt}) and (\ref{stabc}).
\end{proof}

We now give an error estimation for the second order time stepping method of proposed algorithm which converges in space and in time if sufficiently smoothing of the solution is satisfied. The error analysis requires the true solution of the velocity, temperature and concentration at time level $n+\theta$ i.e. $\bu_{n+\theta}=\bu(t_{n+\theta})$, $T_{n+\theta}=T(t_{n+\theta})$ and $S_{n+\theta}=S(t_{n+\theta})$.
First note that the weak formulation of (\ref{bous}) at time level $(n+\theta)$ reads as follows : find $(\bu,T,S,p) \in (\bfX,W,\Psi,Q) $ such that
\begin{gather}
(\bu_{t}(t_{n+\theta}),\bv^h)+\nu (\nabla \bu_{n+\theta},\nabla \bv^h)+
b^{*}(\bu_{n+\theta},\bu_{n+\theta},\bv^h)+Da^{-1}(\bu_{n+\theta},\bv^h)
-(p_{n+\theta},\nabla \cdot \bv^h)\nonumber\\
=\beta_{T}(\bfg T_{n+\theta},\bv^h)+ \beta_{S}(\bfg S_{n+\theta},\bv^h),\label{vel}
\\
(\nabla\cdot \bu_{n+\theta},q^h)=0,  \label{pres},
\\
(T_{t}(t_{n+\theta}),\chi^h)+\gamma(\nabla T_{n+\theta},\nabla \chi^h)+c^{*}(\bu_{n+\theta},T,\chi^h)=0\label{temp}
\\
(S_{t}(t_{n+\theta}),\Phi^h)+D_c(\nabla S_{n+\theta},\nabla \Phi^h)+d^{*}(\bu_{n+\theta},S,\Phi^h)=0\label{cont}
\end{gather}
for all $(\bv^h ,\chi^h,\Phi^h,q^h) \in (\bfX^h, W^h,\Psi^h,Q^h) $.

We use the following notations for the discrete norms. For $\bv^n \in H^p(\Omega)$, we define
\begin{equation*}
\norm{|\bv|}_{\infty,p}:=\max_{0\leq n\leq N}\|\bv^n\|_p ,  \quad \norm{|\bv|}_{m,p}:=\bigg(\Delta t \sum_{n=0}^{N} \|\bv^n\|_p^m  \bigg)^{\dfrac{1}{m}}.
\end{equation*}
To obtain the optimal convergence, we assume that the following regularity assumptions hold for the true solutions:
\begin{equation}\label{ra}
\begin{array}{rcll}
\bu, T, S &\in& L^{\infty}(0,T;H^1(\Omega))\cap H^1(0,T;H^{k+1}(\Omega))\cap H^3(0,T;L^2(\Omega))\cap H^2(0,T;H^1(\Omega)),\\
p &\in& L^2(0,T;H^{s+1}(\Omega))\cap H^2(0,T;L^2(\Omega)).\\
\end{array}
\end{equation}

\begin{theorem}\label{erreqn}
Let $(\bu, p, T, S )$ be the solution of the problems $(\ref{bous})$ such that the regularity assumptions $(\ref{ra})$ are satisfied. Then, the following bound holds for the differences $e_{n}^{\bu}=\bu_{n}-\bu_{n}^{h}$, $ e_{n}^{T}=T_{n}-T_{n}^{h}$ and $e_{n}^{S}=S_{n}-S_{n}^{h}$:
\begin{eqnarray}
\norm{e^{\bu}_{N}}^2+\norm{e^{T}_{N}}^2+\norm{e^{S}_{N}}^2+\dfrac{1}{2\theta+1}\sum_{n=1}^{N-1} \norm{e^{\bu}_{n+1}-2e^{\bu}_{n}+e^{\bu}_{n-1}}_{F}^{2}+ \dfrac{ 2\Delta t\nu}{2\theta+1} \sum_{n=1}^{N-1}\norm{{F}_{n+\theta}^{\eps,\nu}(\nabla e^{\bu})}^{2}\nonumber
\end{eqnarray}
\begin{eqnarray}
\lefteqn{+\dfrac{ 4\Delta tDa^{-1}}{2\theta+1} \sum_{n=1}^{N-1}\norm{{F}_{n+\theta}^{\eps,\nu}( e^{\bu})}^{2}+\dfrac{1}{2\theta+1}\sum_{n=1}^{N-1} \norm{e^{T}_{n+1}-2e^{T}_{n}+e^{T}_{n-1}}_{F}^{2} + \dfrac{ 2\Delta t\gamma}{2\theta+1} \sum_{n=1}^{N-1}\norm{{F}_{n+\theta}^{\eps_1,\gamma}(\nabla e^{T})}^{2}}\nonumber\\
&&+\dfrac{1}{2\theta+1}\sum_{n=1}^{N-1} \norm{e^{S}_{n+1}-2e^{S}_{n}+e^{S}_{n-1}}_{F}^{2} + \dfrac{ 2\Delta tD_c}{2\theta+1} \sum_{n=1}^{N-1}\norm{{F}_{n+\theta}^{\eps_2, D_c}(\nabla e^{S})}^{2}\nonumber\\
&\leq&\exp(\tilde{C}T)\Bigg[\bigg(\dfrac{2\theta-1}{2\theta+1}\bigg)^{N}\Big(\norm{e^{\bu}_{0}}^2+\norm{e^{T}_{0}}+\norm{e^{S}_{0}}^2\Big)+C\bigg(1-\bigg(\dfrac{2\theta-1}{2\theta+1}\bigg)^{N}\bigg)\Big(\norm{
	e^{\bu}_1}^2+
	\norm{
	e^{\bu}_{0}}^2 \nonumber\\
	&&+\norm{
	e^{T}_1}^2+
	\norm{e^{T}_{0}}^2+\norm{
	e^{S}_1}^2+\norm{
	e^{S}_{0}}^2\Big)+C\bigg(1-\bigg(\dfrac{2\theta-1}{2\theta+1}\bigg)^{N}\bigg)\Big(\nu^{-1}\Delta t^4 \norm{|p_{tt}|}_{2,0}^{2}\nonumber\\
	&&+\nu^{-1}h^{2k+2}\norm{|p|}_{2,k+1}^{2}+\nu^{-1}\Delta t^4\norm{|\bu_{ttt}|}_{2,0}^{2}+\nu^{-1}h^{2k+2}\norm{|\bu_{t}|}_{2,k+1}^{2}\nonumber\\
&&	+(\nu+\nu^{-1}+\nu^{-1}\norm{|\nabla\bu|}_{\infty}^{2}+\gamma^{-1}\norm{\nabla T}_{\infty}^{2}+D_c^{-1}\norm{\nabla S}_{\infty}^{2})\Delta t^4\norm{|\nabla\bu_{tt}|}_{2,0}^{2}\nonumber\\
	&&+(\nu +\nu^{-1}+\nu^{-1}\norm{|\nabla \bu|}_{\infty}^{2}+\gamma^{-1}\norm{|\nabla T|}_{\infty}^{2}+D_c^{-1}\norm{|\nabla S|}_{\infty}^{2})h^{2k}\norm{|\bu|}_{2,k+1}^{2}\nonumber\\
	&&+\gamma^{-1}\Delta t^4\norm{|T_{ttt}|}_{2,0}^{2}+(\gamma+\gamma^{-1}+\gamma^{-1}\norm{|\nabla\bu |}_{\infty}^{2}+\nu^{-1}\beta_{T}^2\norm{\bfg}_{\infty}^2)\Delta t^4\norm{|\nabla T_{tt}|}_{2,0}^{2} \nonumber\\
	&&+\gamma^{-1}h^{2k+2}\norm{|T_{t}|}_{2,k+1}^{2}+(\gamma+\gamma^{-1}\norm{|\nabla \bu|}_{\infty}^{2}+\nu^{-1}\beta_{T}^2\norm{\bfg}_{\infty}^2) h^{2k}\norm{|T|}_{2,k+1}^{2}\nonumber\\
	&&+D_c^{-1}\Delta t^4\norm{|S_{ttt}|}_{2,0}^{2}+(D_c+D_c^{-1}+D_c^{-1}\norm{|\nabla\bu |}_{\infty}^{2}+\nu^{-1}\beta_{S}^2\norm{\bfg}_{\infty}^2)\Delta t^4\norm{|\nabla S_{tt}|}_{2,0}^{2}\nonumber\\
	&&+D_c^{-1}h^{2k+2}\norm{|S_{t}|}_{2,k+1}^{2}+(D_c+D_c^{-1}\norm{|\nabla \bu|}_{\infty}^{2}+\nu^{-1}\beta_{S}^2\norm{\bfg}_{\infty}^2) h^{2k}\norm{|S|}_{2,k+1}^{2}\Big)\Bigg].\label{erreq}
\end{eqnarray}
\end{theorem}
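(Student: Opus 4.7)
The plan is to adapt the standard energy/Gronwall strategy for BDF2LE-type schemes to the Darcy--Brinkman coupling, using Lemma \ref{lem:inn} as the key algebraic identity and combining three coupled estimates (velocity, temperature, concentration) into a single Gronwall loop. First, I would derive error equations by evaluating the weak formulation (\ref{vel})--(\ref{cont}) at $t_{n+\theta}$ and subtracting the discrete scheme (\ref{Discvel})--(\ref{Discont}). Decomposing each error as $e^{\bu}_n=(\bu_n-\tilde{\bu}_n^h)+(\tilde{\bu}_n^h-\bu_n^h)=:\bfeta_n^{\bu}+\bphi_n^{\bu,h}$ with $\tilde{\bu}_n^h\in\bfV^h$ a discretely divergence-free approximation (and likewise $e^T_n=\eta_n^T+\phi_n^{T,h}$, $e^S_n=\eta_n^S+\phi_n^{S,h}$), the natural test functions are $\bv^h={F}_{n+\theta}^{\eps,\nu}(\bphi^{\bu,h})$, $\chi^h={F}_{n+\theta}^{\eps_1,\gamma}(\phi^{T,h})$, and $\Phi^h={F}_{n+\theta}^{\eps_2,D_c}(\phi^{S,h})$; working on $\bfV^h$ cancels the discrete pressure and leaves only the consistency residual $(p_{n+\theta}-{F}_{n+\theta}^{\eps,\nu}(p),\nabla\cdot\bv^h)$, which contributes the $\Delta t^4\norm{|p_{tt}|}_{2,0}^2$ and $h^{2k+2}\norm{|p|}_{2,k+1}^2$ terms via (\ref{ap4}).

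Next, Lemma \ref{lem:inn} converts the time-derivative/stabilization pairing on the left into a telescoping $G$-norm difference plus the $F$-norm dissipation of the discrete curvature, and the lower bound of Lemma \ref{lem:gnorm} then furnishes coercivity in the usual $L^2$ norm. The consistency errors ${D}_{n+\theta}(\bu)-\bu_t(t_{n+\theta})$, $\bu_{n+\theta}-{F}_{n+\theta}^{\eps,\nu}(\bu)$, and ${H}_{n+\theta}(\bu)-\bu_{n+\theta}$ are each $O(\Delta t^2)$ by Taylor expansion about $t_{n+\theta}$; after squaring and summing they produce the $\Delta t^4\norm{|\bu_{ttt}|}_{2,0}^2$ and $\Delta t^4\norm{|\nabla\bu_{tt}|}_{2,0}^2$ contributions of (\ref{erreq}), and likewise for $T,S$. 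The interpolation residuals $\bfeta^{\bu},\eta^T,\eta^S$ generate the $h^{2k}\norm{|\cdot|}_{2,k+1}^2$ and $h^{2k+2}\norm{|\cdot_t|}_{2,k+1}^2$ spatial contributions via (\ref{ap1})--(\ref{ap3}).

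The main technical obstacle will be the nonlinear trilinear terms. After writing $b^*({H}_{n+\theta}(\bu),{F}_{n+\theta}^{\eps,\nu}(\bu),\bv^h)-b^*({H}_{n+\theta}(\bu^h),{F}_{n+\theta}^{\eps,\nu}(\bu^h),\bv^h)=b^*({H}_{n+\theta}(e^{\bu}),{F}_{n+\theta}^{\eps,\nu}(\bu),\bv^h)+b^*({H}_{n+\theta}(\bu^h),{F}_{n+\theta}^{\eps,\nu}(e^{\bu}),\bv^h)$, I would bound the piece involving the true solution by the $L^\infty$-variant (\ref{infb}), producing the $\nu^{-1}\norm{|\nabla\bu|}_\infty^2$ coefficient visible in (\ref{erreq}), and bound the piece involving $\bu^h$ by (\ref{nterm1}) together with the $H^1$-stability bound (\ref{sta}), absorbing a gradient factor into the dissipation $\tfrac{\nu}{4}\norm{{F}_{n+\theta}^{\eps,\nu}(\nabla\bphi^{\bu,h})}^2$ via Young's inequality. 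Analogous splittings of $c^*,d^*$ produce the $\gamma^{-1}\norm{|\nabla T|}_\infty^2$ and $D_c^{-1}\norm{|\nabla S|}_\infty^2$ coefficients. The buoyancy couplings $\beta_T(\bfg{H}_{n+\theta}(e^T),\bv^h)$ and $\beta_S(\bfg{H}_{n+\theta}(e^S),\bv^h)$ are handled exactly as in the stability proof, cf.\ (\ref{rhs1})--(\ref{rhs2}), and these are what tie the three energy inequalities together.

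Finally, I would add the three resulting inequalities, multiply by $\Delta t$, sum over $n=1,\dots,N-1$, and invoke Lemma \ref{lem:gnorm} to pass from $G$-norm to $L^2$ on the left. The recursion for $a_N=\norm{\bphi^{\bu,h}_N}^2+\norm{\phi^{T,h}_N}^2+\norm{\phi^{S,h}_N}^2$ has precisely the form required by Lemma \ref{gr}, and the discrete Gronwall step together with a triangle inequality against the interpolation errors $\bfeta^{\bu},\eta^T,\eta^S$ yields (\ref{erreq}). The delicate bookkeeping throughout is to track which $\nu,\gamma,D_c$-weighted pieces can be hidden in the $F$-norm/gradient dissipation on the left so that the Gronwall constant $\tilde{C}$ stays independent of $\Delta t$ and $h$, and to ensure the cross buoyancy terms are properly absorbed before Gronwall is invoked.
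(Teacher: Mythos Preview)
Your proposal is correct and follows essentially the same line as the paper's proof: derive error equations from (\ref{vel})--(\ref{cont}) minus (\ref{Discvel})--(\ref{Discont}), decompose each error into interpolation plus discrete parts, test with ${F}_{n+\theta}^{\delta,\mu}$ of the discrete part, invoke Lemma~\ref{lem:inn} for the telescoping $G$-norm structure, bound consistency errors by Taylor expansion and interpolation errors by (\ref{ap1})--(\ref{ap4}), sum, apply Lemma~\ref{lem:gnorm} and then Lemma~\ref{gr}, and finish with the triangle inequality.

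The one place you differ from the paper is the nonlinear splitting. You write
\[
b^*(H(\bu),F(\bu),\cdot)-b^*(H(\bu^h),F(\bu^h),\cdot)=b^*(H(e^{\bu}),F(\bu),\cdot)+b^*(H(\bu^h),F(e^{\bu}),\cdot),
\]
whereas the paper uses the mirror decomposition $b^*(H(\bu),F(e^{\bu}),\cdot)+b^*(H(e^{\bu}),F(\bu^h),\cdot)$. After substituting $\bv^h=F(\bphi^h)$, skew-symmetry kills the ``dangerous'' piece in either case, but the surviving $\bphi^h$--$\bphi^h$ term is $b^*(H(\bphi^h),F(\bu),F(\bphi^h))$ for you versus $b^*(H(\bphi^h),F(\bu^h),F(\bphi^h))$ for the paper. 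Your version lets (\ref{infb}) act with the $L^\infty$-norm on the \emph{true} solution, which is slightly cleaner than the paper's implicit use of $\norm{F(\bu^h)}_{W^{1,\infty}}$ in the Gronwall constant. Conversely, your remaining piece $b^*(H(\bu^h),F(\bfeta^{\bu}),F(\bphi^h))$ needs control of $\norm{\nabla H(\bu^h)}$; note that the stability bound (\ref{sta}) only controls $\Delta t\sum_n\norm{F_{n+\theta}^{\eps,\nu}(\nabla\bu^h)}^2$, not pointwise $\norm{\nabla\bu_n^h}$, so invoking (\ref{sta}) there is not quite enough by itself---you should instead absorb that factor either via the regularity of $\bu$ plus the already-established error bound (writing $\bu^h=\bu-e^{\bu}$), or simply leave it as $\norm{|\nabla\bu|}_\infty$ as the paper ultimately does. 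This is a minor bookkeeping point rather than a structural gap.
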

\begin{remark}\label{re}
	Note that if one formulates Theorem \ref{erreqn} for the most common choice of inf-sup stable finite element spaces, like Taylor Hood element, for the velocity and pressure and piecewise quadratics polynomials for the temperature and the concentration, then the optimal errors for the velocity, temperature and concentration are obtained. Similarly, second order accuracy in time is achieved with these finite element choices.
\end{remark}
\begin{corollary} \label{corr}
	Under the assumptions of Theorem \ref{erreqn}, let $(\bfX^h, W^h,\Psi^h, Q^h)=(P_2, P_2, P_2, P_1)$ be the finite element spaces given by $Remark$ $\ref{re}$. Then the asymptotic error estimation satisfies

\begin{eqnarray}
\norm{e^{\bu}_{N}}^2+\norm{e^{T}_{N}}^2+\norm{e^{S}_{N}}^2+\dfrac{1}{2\theta+1}\sum_{n=1}^{N-1} \norm{e^{\bu}_{n+1}-2e^{\bu}_{n}+e^{\bu}_{n-1}}_{F}^{2}+ \dfrac{ 2\Delta t\nu}{2\theta+1} \sum_{n=1}^{N-1}\norm{{F}_{n+\theta}^{\eps,\nu}(\nabla e^{\bu})}^{2}\nonumber\\
+\dfrac{ 4\Delta tDa^{-1}}{2\theta+1} \sum_{n=1}^{N-1}\norm{{F}_{n+\theta}^{\eps,\nu}( e^{\bu})}^{2}+\dfrac{1}{2\theta+1}\sum_{n=1}^{N-1} \norm{e^{T}_{n+1}-2e^{T}_{n}+e^{T}_{n-1}}_{F}^{2} + \dfrac{ 2\Delta t\gamma}{2\theta+1} \sum_{n=1}^{N-1}\norm{{F}_{n+\theta}^{\eps,\gamma}(\nabla e^{T})}^{2}\nonumber\\
+\dfrac{1}{2\theta+1}\sum_{n=1}^{N-1} \norm{e^{S}_{n+1}-2e^{S}_{n}+e^{S}_{n-1}}_{F}^{2} + \dfrac{ 2\Delta tD_S}{2\theta+1} \sum_{n=1}^{N-1}\norm{{F}_{n+\theta}^{\eps, D_c}(\nabla e^{S})}^{2}\nonumber\\
  \leq C((\Delta t)^4+h^4+\norm{e_{0}^{\bu}}^2+\norm{e_{1}^{\bu}}^2+\norm{e_{0}^{T}}^2+\norm{e_{1}^{T}}^2+\norm{e_{0}^{S}}^2+\norm{e_{1}^{S}}^2))\nonumber.
		\end{eqnarray}	
	\end{corollary}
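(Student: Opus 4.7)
The plan is to derive the corollary as a direct specialization of Theorem~\ref{erreqn} to the Taylor--Hood pair $(P_2,P_2,P_2,P_1)$, rather than by redoing any energy estimate. For this choice the approximation properties (\ref{ap1})--(\ref{ap4}) hold with $k=2$, so $h^{2k}=h^4$ and $h^{2k+2}=h^6$; on any bounded domain, once $h\le 1$, the higher-order $h^6$ contributions are absorbed into the $h^4$ contribution. I would therefore start from the right-hand side of (\ref{erreq}) with $k=2$ substituted, and then collect every problem-dependent multiplier into a single generic constant $C$.

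First I would invoke the regularity assumption (\ref{ra}): all of $\norm{|p|}_{2,k+1}$, $\norm{|p_{tt}|}_{2,0}$, $\norm{|\bu|}_{2,k+1}$, $\norm{|\bu_t|}_{2,k+1}$, $\norm{|\bu_{ttt}|}_{2,0}$, $\norm{|\nabla\bu_{tt}|}_{2,0}$, together with the analogous time-integrated norms of $T$ and $S$, are finite under (\ref{ra}). The $L^\infty$ norms $\norm{|\nabla \bu|}_\infty$, $\norm{|\nabla T|}_\infty$, and $\norm{|\nabla S|}_\infty$ are controlled by the Sobolev embedding $H^{k+1}(\Omega)\hookrightarrow W^{1,\infty}(\Omega)$ in dimensions $d\le 3$ when $k+1=3$, combined with the $L^\infty(0,T;H^{k+1})$ regularity of (\ref{ra}). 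The physical parameters $\nu,\gamma,D_c,\beta_T,\beta_S$ and the bound $\norm{\bfg}_\infty$ are fixed data, so all their powers and inverses are bounded constants. Hence each coefficient multiplying an $h^{2k}$, $h^{2k+2}$, or $\Delta t^4$ factor in (\ref{erreq}) becomes a bounded constant, and every spatial term reduces to a constant times $h^4$ while every temporal term stays proportional to $\Delta t^4$.

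Next I would handle the scalar prefactors: $1-((2\theta-1)/(2\theta+1))^N\le 1$ for $\theta\in[\tfrac12,1]$, and $((2\theta-1)/(2\theta+1))^N\le 1$ applied to the initial errors, while $\exp(\tilde C T)$ is finite because the final time $T=N\Delta t$ is fixed; all three factors get absorbed into $C$. The initial-error contributions $\norm{e^{\bu}_0}^2$, $\norm{e^{\bu}_1}^2$, and their $T$- and $S$-analogues are retained explicitly as additive terms, exactly as in the statement.

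The main obstacle, such as it is, will be purely bookkeeping: verifying that each of the many $L^\infty$, Sobolev, and time-integrated norms appearing in (\ref{erreq}) is indeed finite under (\ref{ra}), and that no coefficient on the right-hand side contains a factor that degrades as $h\to 0$ or $\Delta t\to 0$. Once this accounting is completed, the stated $O(\Delta t^4+h^4)$ rate plus initial-error terms follows immediately by combining all the substituted bounds under the common constant $C$.
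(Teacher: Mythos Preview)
Your proposal is correct and follows exactly the same approach as the paper: the paper's proof simply says to apply the approximation properties (\ref{ap1})--(\ref{ap4}) to the right-hand side of (\ref{erreq}) together with the regularity assumption (\ref{ra}). Your write-up is in fact more detailed than the paper's one-line argument, carefully tracking the Sobolev embedding for the $L^\infty$ norms and the boundedness of the $\theta$-dependent prefactors, all of which the paper leaves implicit.
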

\begin{proof}
Application of the approximation properties (\ref{ap1})- $(\ref{ap4})$ in the right hand side of (\ref{erreq}) and the regularity assumption (\ref{ra}) gives the required result.
\end{proof}
We now give the proof of our main theorem.
\begin{proof}
First step is to obtain the error equations. By using the operators
(\ref{op1})-(\ref{op3}), adding and subtracting terms in  $(\ref{vel})$-$(\ref{temp})$, true solutions, $(u_{n+1}, p_{n+1}, T_{n+1}, S_{n+1})$ at time level $n+\theta $ satisfy

\begin{eqnarray}
({D}_{n+\theta}(\bu),\bv^{h})+\nu ({F}_{n+\theta}^{\eps,\nu}(\nabla \bu),\nabla \bv^{h})+
b^{*}({H}_{n+\theta}(\bu),{F}_{n+\theta}^{\eps,\nu}(\bu),\bv^{h})
+Da^{-1}({F}_{n+\theta}^{\eps,\nu}(\bu),\bv^{h})\nonumber\\
-(p_{n+\theta},\nabla \cdot \bv^{h})
=\beta_{T}(\bfg H_{n+\theta}(T),\bv^{h}) +\beta_{S}(\bfg H_{n+\theta}(S),\bv^{h})\nonumber\\ +E_1(\bu_{n+\theta},T_{n+\theta},S_{n+\theta};\bv^{h}), \label{trvel}
\end{eqnarray}
\begin{eqnarray}
({D}_{n+\theta}(T),\chi^{h})+\gamma ({F}_{n+\theta}^{\eps_1,\gamma}(\nabla T),\nabla \chi^{h})+
c^{*}({H}_{n+\theta}(\bu),{F}_{n+\theta}^{\eps_1,\gamma}(T),\chi^{h}) =E_2(\bu_{n+\theta},T_{n+\theta};\chi^{h}) \label{trtem}
\end{eqnarray}
and
\begin{eqnarray}
({D}_{n+\theta}(S),\Phi^{h})+D_c ({F}_{n+\theta}^{\eps_2,D_c}(\nabla S),\nabla \Phi^{h})+
d^{*}({H}_{n+\theta}(\bu),{F}_{n+\theta}^{\eps_2,D_c}(S),\Phi^{h}) =E_3(\bu_{n+\theta},S_{n+\theta};\Phi^{h}) \label{trcont}
\end{eqnarray}
for all $(\bv^{h},\chi^{h},\Phi^{h}) \in (\bfX^{h},W^{h},\Psi^{h})$, where
\begin{eqnarray}
E_1(\bu_{n+\theta},T_{n+\theta},S_{n+\theta};\bv^{h})&=&(D_{n+\theta}(\bu)-\bu_{t}(t_{n+\theta}),\bv^{h})+\nu(\nabla (F_{n+\theta}^{\eps,\nu}(\bu)-\bu_{n+\theta}),\nabla\bv^{h})\nonumber\\
&&+b^{*}(H_{n+\theta}(\bu)-\bu_{n+\theta},F_{n+\theta}^{\eps,\nu}(\bu),\bv^{h})
+b^{*}(\bu_{n+\theta},F_{n+\theta}^{\eps,\nu}(\bu)-\bu_{n+\theta},\bv^{h})\nonumber\\
&&+Da^{-1}({F}_{n+\theta}^{\eps,\nu}(\bu)-\bu_{n+\theta},\bv^{h})+ \beta_{T}(\bfg(H_{n+\theta}(T)-T_{n+\theta}),\bv^{h} )\nonumber\\
&&+\beta_{S}(\bfg(H_{n+\theta}(S)-S_{n+\theta}),\bv^{h}), \label{E1}
\end{eqnarray}
\begin{eqnarray}
E_2(\bu_{n+\theta},T_{n+\theta};\chi^{h})&=&(D_{n+\theta}(T)-T_{t}(t_{n+\theta}),\chi^{h})+\gamma(\nabla(F_{n+\theta}^{\eps_1,\gamma}(T)-T_{n+\theta}),\nabla \chi^{h})\nonumber\\
&&+c^{*}(H_{n+\theta}(\bu)-\bu_{n+\theta},F_{n+\theta}^{\eps_1,\gamma}(T),\chi^{h})\nonumber\\
&&+c^{*}(\bu_{n+\theta},F_{n+\theta}^{\eps_1,\gamma}(T)-T_{n+\theta},\chi^{h})\label{E2}
\end{eqnarray}
and
\begin{eqnarray}
E_3(\bu_{n+\theta},S_{n+\theta};\Phi^{h})&=&(D_{n+\theta}(S)-S_{t}(t_{n+\theta}),\Phi^{h})+D_c(\nabla(F_{n+\theta}^{\eps_2,D_c}(S)-S_{n+\theta}),\nabla \Phi^{h})\nonumber\\
&&+d^{*}(H_{n+\theta}(\bu)-\bu_{n+\theta},F_{n+\theta}^{\eps_2,D_c}(S),\Phi^{h})\nonumber\\
&&+d^{*}(\bu_{n+\theta},F_{n+\theta}^{\eps_2,D_c}(S)-S_{n+\theta},\Phi^{h}).\label{E3}
\end{eqnarray}
Let us decompose the velocity and temperature error in the following way;
\begin{eqnarray}
e^{\bu}_{n}=\bu_{n}-\bu_{n}^{h}
=(\bu_{n}-I^{h}(\bu_{n}))+(I^{h}(\bu_{n})-\bu_{n}^{h})=\bfeta^{\bu}_{n}+\bphi_{n}^h,\nonumber
\end{eqnarray}
\begin{eqnarray}
e^{T}_{n}=T_{n}-T_{n}^{h}
=(T_{n}-I^{h}(T_{n}))+(I^{h}(T_{n})-T_{n}^{h})=\eta
^{T}_{n}+\xi_{n}^h, \nonumber
\end{eqnarray}
\begin{eqnarray}
e^{S}_{n}=S_{n}-S_{n}^{h}
=(S_{n}-I^{h}(S_{n}))+(I^{h}(S_{n})-S_{n}^{h})=\eta
^{S}_{n}+\zeta_{n}^h, \nonumber
\end{eqnarray}
where $I^{h}(\bu_{n}) \in \bfV^{h}$ is the interpolant of $\bu_{n} $ in $\bfV^{h}$, $I^{h}(T_{n}) \in W^{h}$ is the interpolant of $T_{n} $ in $W^{h}$ and $I^{h}(S_{n}) \in \Psi^{h}$ is the interpolant of $S_{n} $ in $\Psi^{h}$ .\\
The error equations for the velocity, temperature and concentration are obtained by subtracting $(\ref{Discvel})$, $(\ref{Distemp})$,$(\ref{Discont})$  from $(\ref{trvel})$, $(\ref{trtem})$ and $(\ref{trcont})$, respectively:
\begin{eqnarray}
(D_{n+\theta}(e^{\bu}),\bv^{h})+\nu(F_{n+\theta}^{\eps,\nu}(\nabla e^{\bu}),\nabla\bv^{h})+b^{*}(H_{n+\theta}(\bu),F_{n+\theta}^{\eps,\nu}(e^{\bu}),\bv^{h})+Da^{-1}(F_{n+\theta}^{\eps,\nu}( e^{\bu}), \bv^h)\nonumber\\
=E_1(\bu_{n+\theta},T_{n+\theta};\bv^{h})-(F_{n+\theta}^{\eps,\nu}(p)-p_{n+\theta},\nabla\cdot\bv^{h})
+(F_{n+\theta}^{\eps,\nu}(p)-q^{h},\nabla\cdot\bv^{h})\nonumber\\
-b^{*}(H_{n+\theta}(e^{\bu}),F_{n+\theta}^{\eps,\nu}(\bu^{h}),\bv^{h})
+\beta_{T}(\bfg H_{n+\theta}(e^{T}),\bv^{h})
+\beta_{S}(\bfg H_{n+\theta}(e^{S}),\bv^{h}), \label{errvel}
\end{eqnarray}
\begin{eqnarray}
(D_{n+\theta}(e^{T}),\chi^{h})+\gamma(F_{n+\theta}^{\eps_1,\gamma}(\nabla e^{T}),\nabla \chi^{h})+c^{*}(H_{n+\theta}(\bu),F_{n+\theta}^{\eps_1,\gamma}(e^{T}),\chi^{h})\nonumber\\
=E_2(\bu_{n+\theta},T_{n+\theta};\chi^{h})
-c^{*}(H_{n+\theta}(e^{\bu}),F_{n+\theta}^{\eps_1,\gamma}(T^{h}),\chi^{h}) \label{errtem}
\end{eqnarray}
\begin{eqnarray}
(D_{n+\theta}(e^{S}),\chi^{h})+ D_c(F_{n+\theta}^{\eps_2,D_c}(\nabla e^{S}),\nabla \Phi^{h})+d^{*}(H_{n+\theta}(\bu),F_{n+\theta}^{\eps_2,D_c}(e^{S}),\Phi^{h})\nonumber\\
=E_3(\bu_{n+\theta},S_{n+\theta};\Phi^{h})
-d^{*}(H_{n+\theta}(e^{\bu}),F_{n+\theta}^{\eps_2,D_c}(S^{h}),\Phi^{h}). \label{errcont}
\end{eqnarray}
Taking $\bv^{h}=F_{n+\theta}^{\eps,\nu}(\bphi^{h}_{n})$ in (\ref{errvel}), $\chi^{h}=F_{n+\theta}^{\eps_1,\gamma}(\xi^{h}_{n})$ in (\ref{errtem}) and $\Phi^{h}=F_{n+\theta}^{\eps_2,D_c}(\zeta^{h}_{n})$ in (\ref{errcont}), using the error decompositions and using the  skew symmetry of the trilinear form, it follows that
\begin{eqnarray}
\lefteqn{\dfrac{1}{\Delta t}\norm{ \begin{bmatrix}
		\bphi^{h}_{n+1} \\
		\bphi^{h}_{n}
		\end{bmatrix}} _{G}^{2} -\dfrac{1}{\Delta t}\norm { \begin{bmatrix}
		\bphi^{h}_{n} \\
		\bphi^{h}_{n-1}
		\end{bmatrix}}_{G}^{2} 	+\dfrac{1}{4\Delta t}\norm{\bphi^{h}_{n+1}-2\bphi^{h}_{n}+\bphi^{h}_{n-1}}_{F}^{2}}\nonumber\\
&&+\nu\norm{F_{n+\theta}^{\eps,\nu}(\nabla \bphi^{h}_{n})}^{2}
+ Da^{-1}\norm{F_{n+\theta}^{\eps,\nu}(\bphi^{h}_{n})}^{2} \nonumber\\
&=&E_1(\bu_{n+\theta},T_{n+\theta},S_{n+\theta};F_{n+\theta}^{\eps,\nu}(\bphi^{h})) -(F_{n+\theta}^{\eps,\nu}(p)-p_{n+\theta},\nabla\cdot F_{n+\theta}^{\eps,\nu}(\bphi^{h}))\nonumber\\
&&+(F_{n+\theta}^{\eps,\nu}(p)-q^{h},\nabla\cdot F_{n+\theta}^{\eps,\nu}(\bphi^{h}))-(D_{n+\theta}(\bfeta^{\bu}),F_{n+\theta}^{\eps,\nu}(\bphi^ {h}))\nonumber\\
&&-\nu(F_{n+\theta}^{\eps,\nu}(\nabla\bfeta^{\bu}),F_{n+\theta}^{\eps,\nu}(\nabla(\bphi^{h}))-Da^{-1}(F_{n+\theta}^{\eps,\nu}(\bfeta^{\bu}),F_{n+\theta}^{\eps,\nu}(\bphi^{h}))\nonumber\\
&&-b^{*}(H_{n+\theta}(\bu),F_{n+\theta}^{\eps,\nu}(\bfeta^{\bu}),F_{n+\theta}^{\eps,\nu}(\bphi^{h}))
-b^{*}(H_{n+\theta}(\bphi^{h}),F_{n+\theta}^{\eps,\nu}(\bu^{h}),F_{n+\theta}^{\eps,\nu}(\bphi^{h}))\nonumber\\
&&-b^{*}(H_{n+\theta}(\bfeta^{\bu}),F_{n+\theta}^{\eps,\nu}(\bu^{h}),F_{n+\theta}^{\eps,\nu}(\bphi^{h}))+\beta_{T}(\bfg H_{n+\theta}(\xi^{h}),F_{n+\theta}^{\eps,\nu}(\bphi^{h}))\nonumber\\
&&+\beta_{T}(\bfg H_{n+\theta}(\eta^{T}),F_{n+\theta}^{\eps,\nu}(\bphi^{h}))
 +\beta_{S}(\bfg H_{n+\theta}(\xi^{h}),F_{n+\theta}^{\eps,\nu}(\bphi^{h}))\nonumber\\
 &&+\beta_{S}(\bfg H_{n+\theta}(\eta^{S}),F_{n+\theta}^{\eps,\nu}(\bphi^{h})), \label{eqnvel}
\end{eqnarray}
\begin{eqnarray}
\lefteqn{\dfrac{1}{\Delta t}\norm{ \begin{bmatrix}
		\xi_{n+1}^{h} \\
		\xi_{n}^{h}
		\end{bmatrix}} _{G}^{2} -\dfrac{1}{\Delta t}\norm { \begin{bmatrix}
		\xi_{n}^{h} \\
		\xi_{n-1}^{h}
		\end{bmatrix}}_{G}^{2} 	+\dfrac{1}{4\Delta t}\norm{\xi_{n+1}^{h}-2\xi_{n}^{h}+\xi_{n-1}^{h}}_{F}^{2}+\gamma\norm{{F}_{n+\theta}^{\eps_1,\gamma}(\nabla \xi^{h})}^{2}} \nonumber\\
&=&E_2(\bu_{n+\theta},T_{n+\theta};F_{n+\theta}^{\eps_1,\gamma}(\xi^{h}))-(D_{n+\theta}(\eta^{T}),F_{n+\theta}^{\eps,\gamma}(\xi^{h}))-\gamma(F_{n+\theta}^{\eps_1,\gamma}(\nabla\eta^{T}),F_{n+\theta}^{\eps_1,\gamma}(\nabla\xi^{h})\nonumber\\
&&-c^{*}(H_{n+\theta}(\bu),F_{n+\theta}^{\eps_1,\gamma}(\eta^{T}),F_{n+\theta}^{\eps_1,\gamma}(\xi^{h}))-c^{*}(H_{n+\theta}(\phi^{h}),F_{n+\theta}^{\eps_1,\gamma}(T^{h}),F_{n+\theta}^{\eps_1,\gamma}(\xi^{h}))\nonumber\\
&&-c^{*}(H_{n+\theta}(\bfeta^{\bu}),F_{n+\theta}^{\eps_1,\gamma}(T^{h}),F_{n+\theta}^{\eps_1,\gamma}(\xi^{h}))\label{eqntem}
\end{eqnarray}
and
\begin{eqnarray}
\lefteqn{\dfrac{1}{\Delta t}\norm{ \begin{bmatrix}
		\zeta_{n+1}^{h} \\
		\zeta_{n}^{h}
		\end{bmatrix}} _{G}^{2} -\dfrac{1}{\Delta t}\norm { \begin{bmatrix}
		\zeta_{n}^{h} \\
		\zeta_{n-1}^{h}
		\end{bmatrix}}_{G}^{2} 	+\dfrac{1}{4\Delta t}\norm{\zeta_{n+1}^{h}-2\zeta_{n}^{h}+\zeta_{n-1}^{h}}_{F}^{2}+D_c\norm{{F}_{n+\theta}^{\eps_2,D_c}(\nabla \zeta^{h})}^{2}} \nonumber\\
&=&E_3(\bu_{n+\theta},S_{n+\theta};F_{n+\theta}^{\eps_2,D_c}(\zeta^{h}))-(D_{n+\theta}(\eta^{S}),F_{n+\theta}^{\eps_2,D_c}(\zeta^{h}))-D_c(F_{n+\theta}^{\eps_2,D_c}(\nabla\eta^{S}),F_{n+\theta}^{\eps_2,D_c}(\nabla\zeta^{h})\nonumber\\
&&-d^{*}(H_{n+\theta}(\bu),F_{n+\theta}^{\eps_2,D_c}(\eta^{S}),F_{n+\theta}^{\eps_2,D_c}(\zeta^{h}))-d^{*}(H_{n+\theta}(\phi^{h}),F_{n+\theta}^{\eps_2,D_c}(S^{h}),F_{n+\theta}^{\eps_2,D_c}(\zeta^{h}))\nonumber\\
&&-d^{*}(H_{n+\theta}(\bfeta^{\bu}),F_{n+\theta}^{\eps_2,D_c}(T^{h}),F_{n+\theta}^{\eps_2,D_c}(\zeta^{h})). \label{eqncont}
\end{eqnarray}
To bound the first term in the right hand side of (\ref{eqnvel}), we consider each term in $(\ref{E1})$. Using Cauchy-Schwarz, Young's, Poincar\'e-Friedrichs inequalities and Taylor's theorem, the first term in $(\ref{E1})$ is bounded by
 \begin{eqnarray}
(D_{n+\theta}(\bu)-\bu_{t}(t_{n+\theta}),F_{n+\theta}^{\eps,\nu}(\bphi^{h}))
&\leq&\dfrac{\nu}{64}\norm{F_{n+\theta}^{\eps,\nu}(\nabla\bphi^{h})}^{2}+C\nu^{-1}\norm{D_{n+\theta}(\bu)-\bu_{t}(t_{n+\theta})}^{2}\nonumber\\
&\leq&\dfrac{\nu}{64}\norm{F_{n+\theta}^{\eps,\nu}(\nabla\bphi^{h})}^{2}+C\nu^{-1}\theta^{6}\Delta t^{3}\int_{t_{n-1}}^{t_{n+1}}\norm{\bu_{ttt}}^{2}dt. \nonumber
\end{eqnarray}
Similarly, we have
\begin{eqnarray}
\nu(\nabla(F_{n+\theta}^{\eps,\nu}(\bu)-\bu_{n+\theta}),\nabla F_{n+\theta}^{\eps,\nu}(\bphi^{h}))
&\leq& \dfrac{\nu}{64}\norm{F_{n+\theta}^{\eps,\nu}(\nabla\bphi^{h})}^{2}+C\nu\norm{\nabla(\theta\bu_{n+1}+(1-\theta)\bu_{n}-\bu_{n+\theta})}^{2}\nonumber\\
&& +C\nu^{-1}\epsilon^{2}\theta^{2}\norm{\nabla(\bu_{n+1}-2\bu_{n}+\bu_{n-1})}^{2} \nonumber\\
&\leq&\dfrac{\nu}{64}\norm{F_{n+\theta}^{\eps,\nu}(\nabla\bphi^{h})}^{2}+C\nu\theta^{2}(1-\theta)^{2}\Delta t^{3}\int_{t_{n}}^{t_{n+1}}\norm{\nabla\bu_{tt}}^{2}dt\nonumber\\
&&+C\nu^{-1}\epsilon^{2}\theta^{2}\Delta t^{3}\int_{t_{n-1}}^{t_{n+1}}\norm{\nabla\bu_{tt}}^{2}dt.\nonumber
\end{eqnarray}
We use Cauchy-Schwarz, Young's inequalities and Taylor's theorem to bound the nonlinear terms
\begin{eqnarray}
\lefteqn{b^{*}(H_{n+\theta}(\bu)-\bu_{n+\theta},F_{n+\theta}^{\eps,\nu}(\bu),F_{n+\theta}^{\eps,\nu}(\bphi^{h}))}\nonumber\\
&\leq&\dfrac{\nu}{64}\norm{F_{n+\theta}^{\eps,\nu}(\nabla\bphi^{h})}^{2} +C\nu^{-1}\norm{\nabla(H_{n+\theta}(\bu)-\bu_{n+\theta})}^{2}\norm{F_{n+\theta}^{\eps,\nu}(\nabla\bu)}^{2}\nonumber\\
&\leq&\dfrac{\nu}{64}\norm{F_{n+\theta}^{\eps,\nu}(\nabla\bphi^{h})}^{2}+C\nu^{-1}\theta^{2}(1+\theta^{2})\Delta t^{3}\norm{F_{n+\theta}^{\eps,\nu}(\nabla\bu)}^{2}\int_{t_{n-1}}^{t_{n+1}}\norm{\nabla \bu_{tt}}^{2}dt.\nonumber
\end{eqnarray}
and
\begin{eqnarray}
\lefteqn{b^{*}(\bu_{n+\theta},F_{n+\theta}^{\eps,\nu}(\bu)-\bu_{n+\theta},F_{n+\theta}^{\eps,\nu}(\bphi^{h}))}\nonumber\\
&\leq&\dfrac{\nu}{64}\norm{F_{n+\theta}^{\eps,\nu}(\nabla\bphi^{h})}^{2}+C\nu^{-1}\norm{\nabla\bu_{n+\theta}}^{2}\norm{\nabla(F_{n+\theta}^{\eps,\nu}(\bu)-\bu_{n+\theta})}^{2}\nonumber\\
&\leq&\dfrac{\nu}{64}\norm{F_{n+\theta}^{\eps,\nu}(\nabla\bphi^{h})}^{2}+C\nu^{-1}\theta^{2}(1-\theta^{2})\Delta t^{3}\norm{\nabla\bu_{n+\theta}}^{2}\int_{t_{n}}^{t_{n+1}}\norm{\nabla\bu_{tt}}^{2}dt\nonumber\\
&&+C\nu^{-3}\epsilon^{2}\theta^2\Delta t^{3}\norm{\nabla\bu_{n+\theta}}^{2}\int_{t_{n-1}}^{t_{n+1}}\norm{\nabla\bu_{tt}}^{2}dt,\nonumber
\end{eqnarray}
Similarly, we obtain
\begin{eqnarray}
\lefteqn{Da^{-1}(F_{n+\theta}^{\eps,\nu}(\bu)-\bu_{n+\theta},F_{n+\theta}^{\eps,\nu}(\bphi^{h}))}\nonumber\\
&\leq&\dfrac{\nu}{64}\norm{F_{n+\theta}^{\eps,\nu}(\nabla\bphi^{h})}^{2}+C\nu^{-1}\norm{\nabla(F_{n+\theta}^{\eps,\nu}(\bu)-\bu_{n+\theta})}^{2}\nonumber\\
&\leq&\dfrac{\nu}{64}\norm{F_{n+\theta}^{\eps,\nu}(\nabla\bphi^{h})}^{2}+C\nu^{-1}\theta^{2}(1-\theta^{2})\Delta t^{3}\int_{t_{n}}^{t_{n+1}}\norm{\nabla\bu_{tt}}^{2}dt\nonumber\\
&&+C\nu^{-3}\epsilon^{2}\theta^2\Delta t^{3}\int_{t_{n-1}}^{t_{n+1}}\norm{\nabla\bu_{tt}}^{2}dt,\nonumber
\end{eqnarray}
We proceed to bound the last two terms in $(\ref{E1})$ in a similar manner

\begin{eqnarray}
\lefteqn{\beta_{T}((\bfg H_{n+\theta}(T)-T_{n+\theta}),F_{n+\theta}^{\eps,\nu}(\bphi^{h}))}\nonumber\\
&\leq&\dfrac{\nu}{64}\norm{F_{n+\theta}^{\eps,\nu}(\nabla\bphi^ {h})}^{2}+C\beta_{T}^2\norm{\bfg}_{\infty}^2\nu^{-1}\theta^{2}(1+\theta)^{2}\Delta t^{3}\int_{t_{n-1}}^{t_{n+1}}\norm{\nabla T_{tt}}^{2}dt. \nonumber
\end{eqnarray}
and
\begin{eqnarray}
\lefteqn{\beta_{S}((\bfg H_{n+\theta}(S)-S_{n+\theta}),F_{n+\theta}^{\eps,\nu}(\bphi^{h}))}\nonumber\\
&\leq&\dfrac{\nu}{64}\norm{F_{n+\theta}^{\eps,\nu}(\nabla\bphi^ {h})}^{2}+C\beta_{S}^2\norm{\bfg}_{\infty}^2\nu^{-1}\theta^{2}(1+\theta)^{2}\Delta t^{3}\int_{t_{n-1}}^{t_{n+1}}\norm{\nabla S_{tt}}^{2}dt. \nonumber
\end{eqnarray}
We have completed to bound the terms in $(\ref{E1})$. To bound the remaining terms in right hand side of $(\ref{eqnvel})$, we use Cauchy-Schwarz and Young's inequalities along with Taylor's theorem
\begin{eqnarray}
(F_{n+\theta}^{\eps,\nu}(p)-p_{n+\theta},\nabla\cdot F_{n+\theta}^{\eps,\nu}(\bphi^{h}))
&\leq&\dfrac{\nu}{64}\norm{F_{n+\theta}^{\eps,\nu}(\nabla\bphi^{h})}^{2} +C\nu^{-1}\norm{\theta p_{n+1}+(1-\theta)p_{n}-p_{n+\theta}}^{2}\nonumber\\
&&+C\nu^{-3}\epsilon^{2}\theta^{2}\norm{p_{n+1}-2p_{n}+p_{n-1}}^{2}\nonumber\\
&\leq&\dfrac{\nu}{64}\norm{F_{n+\theta}^{\eps,\nu}(\nabla\bphi^{h})}^{2} +C\nu^{-1}\theta^{2}(1-\theta)^{2}\Delta t^{3}\int_{t_{n}}^{t_{n+1}}\norm{p_{tt}}^{2}dt\nonumber\\
&&+C\nu^{-3}\epsilon^{2}\theta^{2}\Delta t^{3}\int_{t_{n-1}}^{t_{n+1}}\norm{p_{tt}}^{2}dt,\nonumber
\end{eqnarray}

\begin{eqnarray}
(F_{n+\theta}^{\eps,\nu}(p)-q^{h},\nabla\cdot F_{n+\theta}^{\eps,\nu}(\nabla\bphi^{h}))
&\leq&\dfrac{\nu}{64}\norm{F_{n+\theta}^{\eps,\nu}(\nabla\bphi^{h})}^{2} +C\nu^{-1}\bigg(\norm{F_{n+\theta}^{\eps,\nu}(p)-p_{n+\theta}}^{2}+\norm{p_{n+\theta}-q^{h}}^{2}\bigg) \nonumber\\
&\leq&\dfrac{\nu}{64}\norm{F_{n+\theta}^{\eps,\nu}(\nabla\bphi^{h})}^{2} +C\nu^{-1}\theta^{2}(1-\theta)^{2}\Delta t^{3}\int_{t_{n}}^{t_{n+1}}\norm{p_{tt}}^{2}dt\nonumber\\
&& +C\nu^{-3}\epsilon^{2}\theta^{2}\Delta t^{3}\int_{t_{n-1}}^{t_{n+1}}\norm{p_{tt}}^{2}dt +C\nu^{-1}\norm{p_{n+\theta}-q^{h}}^{2},\nonumber
\end{eqnarray}
and
\begin{eqnarray}
\lefteqn{(D_{n+\theta}(\bfeta^{\bu}),F_{n+\theta}^{\eps,\nu}(\bphi^{h}))}\nonumber\\
&\leq& C\norm{\dfrac{\dfrac{1}{2}(\bfeta^{\bu}_{n+1}-\bfeta^{\bu}_{n-1})+\theta(\eta^{\bu}_{n+1}-\bfeta^{\bu}_{n})-\theta(\bfeta^{\bu}_{n}-\bfeta^{\bu}_{n-1})}{\Delta t}}\norm{F_{n+\theta}^{\eps,\nu}(\nabla\bphi^{h})}\nonumber\\
&\leq& C \bigg(\norm{\dfrac{1}{2\Delta t}\int_{t_{n-1}}^{t_{n+1}}\bfeta^{\bu}_{t}dt} +\norm{\dfrac{\theta}{\Delta t}\int_{t_{n}}^{t_{n+1}}\bfeta^{\bu}_{t}dt}+\norm{\dfrac{\theta}{\Delta t}\int_{t_{n-1}}^{t_{n}}\bfeta^{\bu}_{t}dt}\bigg)\norm{F_{n+\theta}^{\eps,\nu}(\nabla\bphi^{h})}\nonumber\\
&\leq& \dfrac{C\nu^{-1}(\theta^{2}+4)}{\Delta t }\int_{t_{n+1}}^{t_{n-1}}\norm{\bfeta^{\bu}_{t}}^{2}dt+\dfrac{\nu}{64}\norm{F_{n+\theta}^{\eps,\nu}(\nabla\bphi^{h})}^{2}.\nonumber
\end{eqnarray}
The next term in (\ref{eqnvel}) is bounded by using Cauchy-Schwarz and Young's inequalities, taking into account the expansion of $F_{n+\theta}^{\eps,\nu}$. It follows that
\begin{eqnarray}
\nu(F_{n+\theta}^{\eps,\nu}(\nabla\bfeta^{\bu}),F_{n+\theta}^{\eps,\nu}(\nabla\bphi^{h}))
&\leq& \nu\norm{F_{n+\theta}^{\eps,\nu}(\nabla\bfeta^{\bu})}\norm{F_{n+\theta}^{\eps,\nu}(\nabla\bphi^{h})}\nonumber\\
&\leq& C\nu \bigg(\big(\theta+\eps\theta\nu^{-1}\big)^{2}\norm{\nabla\bfeta^{\bu}_{n+1}}^{2}+\big(1-\theta-2\epsilon\theta\nu^{-1}\big)^{2}\norm{\nabla\bfeta^{\bu}_{n}}^{2}\nonumber\\
&&+\epsilon^{2}\theta^{2}\nu^{-2}\norm{\nabla\bfeta^{\bu}_{n-1}}^{2}\bigg) +\dfrac{\nu}{64}\norm{F_{n+\theta}^{\eps,\nu}(\nabla\bphi^{h})}^{2}.\nonumber
\end{eqnarray}
Similarly, we have
\begin{eqnarray}
Da^{-1}(F_{n+\theta}^{\eps,\nu}(\bfeta^{\bu}),F_{n+\theta}^{\eps,\nu}(\bphi^{h}))
&\leq& CDa^{-1}\norm{F_{n+\theta}^{\eps,\nu}(\nabla\bfeta^{\bu})}\norm{F_{n+\theta}^{\eps,\nu}(\nabla\bphi^{h})}\nonumber\\
&\leq& C\nu^{-1} \bigg(\big(\theta+\eps\theta\nu^{-1}\big)^{2}\norm{\nabla\bfeta^{\bu}_{n+1}}^{2}+\big(1-\theta-2\epsilon\theta\nu^{-1}\big)^{2}\norm{\nabla\bfeta^{\bu}_{n}}^{2}\nonumber\\
&&+\epsilon^{2}\theta^{2}\nu^{-2}\norm{\nabla\bfeta^{\bu}_{n-1}}^{2}\bigg) +\dfrac{\nu}{64}\norm{F_{n+\theta}^{\eps,\nu}(\nabla\bphi^{h})}^{2}.\nonumber
\end{eqnarray}
Applying estimation (\ref{nterm1}) for the nonlinear terms and expansion of the operators along with Cauchy-Schwarz and Young's inequalities leads to
\begin{eqnarray}
\lefteqn{b^{*}(H_{n+\theta}(\bu),F_{n+\theta}^{\eps,\nu}(\bfeta^{\bu}),F_{n+\theta}^{\eps,\nu}(\bphi^{h}))}\nonumber\\
&\leq& C\norm{H_{n+\theta}(\nabla\bu)}\norm{F_{n+\theta}^{\eps,\nu}(\nabla\bfeta^{\bu})}\norm{F_{n+\theta}^{\eps,\nu}(\nabla\bphi^{h})}\nonumber\\
&\leq& C\nu^{-1}\bigg((\theta+1)^{2}\norm{\nabla\bu_{n}}^{2}+\theta^{2}\norm{\nabla\bu_{n-1}}^{2}\bigg)\bigg(\big(\theta+\epsilon\theta\nu^{-1}\big)^{2}\norm{\nabla\bfeta^{\bu}_{n+1}}^{2}\nonumber\\
&&+\big(1-\theta-2\epsilon\theta\nu^{-1}\big)^{2}\norm{\nabla\bfeta^{\bu}_{n}}^{2}+\epsilon^{2}\theta^{2}\nu^{-2}\norm{\nabla\bu_{n-1}}^{2}\bigg)+\dfrac{\nu}{64}\norm{F_{n+\theta}^{\eps,\nu}(\nabla\bphi^{h})}^{2} \nonumber
\end{eqnarray}
and
\begin{eqnarray}
\lefteqn{b^{*}(H_{n+\theta}(\bfeta^{\bu}),F_{n+\theta}^{\eps,\nu }(\bu^{h}),F_{n+\theta}^{\eps,\nu}(\bphi^{h}))}\nonumber\\
&\leq& C\nu^{-1}\norm{F_{n+\theta}^{\eps,\nu}(\nabla\bu^{h})}^{2}\norm{H_{n+\theta}(\nabla\bfeta^{\bu})}^{2}+\dfrac{\nu}{64}\norm{F_{n+\theta}^{\eps,\nu}(\nabla\bphi^{h})}^{2} \nonumber\\
&\leq& C\nu^{-1} \bigg(\big(\theta+\epsilon\theta\nu^{-1}\big)^{2}\norm{\nabla{\bu}_{n+1}^{h}}^{2}+\big(1-\theta-2\epsilon\theta\nu^{-1}\big)^{2}\norm{\nabla\bu_{n}^{h}}^{2}+\epsilon^{2}\theta^{2}\nu^{-2}\norm{\nabla\bu_{n-1}^{h}}^{2}\bigg)\nonumber\\
&&\times\big((\theta+1)^2\norm{\nabla\bfeta^{\bu}_{n}}^{2}+\theta^2\norm{\nabla\bfeta^{\bu}_{n-1}}^2\big)
+\dfrac{\nu}{64}\norm{F_{n+\theta}^{\eps,\nu}(\nabla\bphi^{h})}^{2}. \nonumber
\end{eqnarray}
Similarly, to bound the next nonlinear term, with the help of (\ref{infb}), one gets
\begin{eqnarray}
\lefteqn{b^{*}(H_{n+\theta}(\bphi^{h}),F_{n+\theta}^{\eps,\nu}(\bu^{h}),F_{n+\theta}^{\eps,\nu}(\bphi^{h}))}\nonumber\\
&\leq& C\norm{F_{n+\theta}^{\eps,\nu}(\nabla \bu^{h})}_{\infty}\norm{H_{n+\theta}( \bphi^{h})}\norm{F_{n+\theta}^{\eps,\nu}(\nabla\bphi^{h})} +C\norm{F_{n+\theta}^{\eps,\nu}(\bu^{h})}_{\infty}\norm{H_{n+\theta}( \bphi^{h})}\norm{F_{n+\theta}^{\eps,\nu}(\nabla\bphi^{h})}\nonumber\\
&\leq& C\nu^{-1}\bigg(\norm{F_{n+\theta}^{\eps,\nu}(\nabla \bu^{h})}_{\infty}^2+\norm{F_{n+\theta}^{\eps,\nu}(\bu^{h})}_{\infty}^2\bigg) \bigg((\theta+1)^2\norm{\bphi^{h}_{n}}^2+\theta^2\norm{\bphi^{h}_{n-1}}^2\bigg)+\dfrac{\nu}{64}\norm{F_{n+\theta}^{\eps,\nu}(\nabla\bphi^{h})}^{2}. \nonumber
\end{eqnarray}
In a similar manner, the last four terms in (\ref{eqnvel}) are bounded by
\begin{eqnarray}
\beta_{T}(\bfg H_{n+\theta}(\xi^{h}),F_{n+\theta}^{\eps,\nu}(\bphi^{h}))
&\leq&\dfrac{\nu}{64}\norm{F_{n+\theta}^{\eps,\nu}(\nabla\bphi^{h})}^{2}+C\beta_{T}^2\norm{\bfg}_{\infty}^2\nu^{-1}\bigg((\theta+1)^2\norm{\xi^{h}_{n}}^2+\theta^2\norm{\xi^{h}_{n-1}}^2\bigg),\nonumber
\end{eqnarray}
\begin{eqnarray}
\beta_{T}(\bfg H_{n+\theta}(\eta^{T}),F_{n+\theta}^{\eps,\nu}(\bphi^{h}))
&\leq&\dfrac{\nu}{64}\norm{F_{n+\theta}^{\eps,\nu}(\nabla\bphi^{h})}^{2}+C\beta_{T}^2\norm{\bfg}_{\infty}^2\nu^{-1}\bigg((\theta+1)^2\norm{\nabla\eta^{T}_{n}}^2+\theta^2\norm{\nabla\eta^{T}_{n-1}}^2\bigg),\nonumber
\end{eqnarray}
\begin{eqnarray}
\beta_{S}(\bfg H_{n+\theta}(\zeta^{h}),F_{n+\theta}^{\eps,\nu}(\bphi^{h}))
&\leq&\dfrac{\nu}{64}\norm{F_{n+\theta}^{\eps,\nu}(\nabla\bphi^{h})}^{2}+C\beta_{S}^2\norm{\bfg}_{\infty}^2\nu^{-1}\bigg((\theta+1)^2\norm{\zeta^{h}_{n}}^2+\theta^2\norm{\zeta^{h}_{n-1}}^2\bigg),\nonumber
\end{eqnarray}
\begin{eqnarray}
\beta_{S}(\bfg H_{n+\theta}(\eta^{S}),F_{n+\theta}^{\eps,\nu}(\bphi^{h}))
&\leq&\dfrac{\nu}{64}\norm{F_{n+\theta}^{\eps,\nu}(\nabla\bphi^{h})}^{2}+C\beta_{S}^2\norm{\bfg}_{\infty}^2\nu^{-1}\bigg((\theta+1)^2\norm{\nabla\eta^{S}_{n}}^2+\theta^2\norm{\nabla\eta^{S}_{n-1}}^2\bigg).\nonumber
\end{eqnarray}
Next insert the above bounds into the $(\ref{errvel})$, use the approximation property (\ref{ap4}), multiply by $\Delta t$ and take the sum from $n=1$ to $n=N-1$ ;
\begin{eqnarray}
\lefteqn{\norm{ \begin{bmatrix}
	\bphi^{h}_{N} \\
	\bphi^{h}_{N-1}
	\end{bmatrix}} _{G}^{2}  	+\dfrac{1}{4}\sum_{n=1}^{N-1} \norm{\bphi^{h}_{n+1}-2\bphi^{h}_{n}+\bphi^{h}_{n-1}}_{F}^{2} + \dfrac{ \Delta t\nu}{2} \sum_{n=1}^{N-1}\norm{{F}_{n+\theta}^{\eps,\nu}(\nabla \bphi^{h})}^{2}+ \Delta tDa^{-1} \sum_{n=1}^{N-1}\norm{{F}_{n+\theta}^{\eps,\nu}( \bphi^{h})}^{2}} \nonumber\\
&\leq& \norm{ \begin{bmatrix}
	\bphi^{h}_1 \\
	\bphi^{h}_{0}
	\end{bmatrix}} _{G}^{2} +C\Delta t\sum_{n=1}^{N-1}\bigg[\nu^{-1}\theta^2(1-\theta)^2\Delta t^3\int_{t_{n}}^{t_{n+1}}\norm{p_{tt}}^2dt +\nu^{-3}\epsilon^2\theta^2\Delta t^3\int_{t_{n-1}}^{t_{n+1}}\norm{p_{tt}}^2dt\nonumber\\
&&+\nu^{-1}h^{2k+2}\norm{p_{n+\theta}}_{k+1}^2
+\nu^{-1}\theta^6\Delta t^3\int_{t_{n-1}}^{t_{n+1}}\norm{\bu_{ttt}}^2dt
 +C\nu\theta^2(1-\theta)^2\Delta t^3\int_{t_{n}}^{t_{n+1}}\norm{\nabla\bu_{tt}}^2dt\nonumber\\
 &&+C\nu^{-1}\epsilon^2\theta^2\Delta t^3\int_{t_{n-1}}^{t_{n+1}}\norm{\nabla\bu_{tt}}^2dt
 +C\nu^{-1}\theta^3(1+\theta)^2\Delta t^3\norm{F_{n+\theta}^{\eps,\nu}(\nabla\bu)}^2\int_{t_{n-1}}^{t_{n+1}}\norm{\nabla\bu_{tt}}^2dt\nonumber\\
 &&+C\nu^{-1}\theta^2(1-\theta)^2\Delta t^3\norm{\nabla\bu_{n+\theta}}^2\int_{t_{n}}^{t_{n+1}}\norm{\nabla\bu_{tt}}^2dt
+C\nu^{-3}\epsilon^2\theta^2\Delta t^3\norm{\nabla\bu_{n+\theta}}^2\int_{t_{n-1}}^{t_{n+1}}\norm{\nabla\bu_{tt}}^2dt\nonumber\\
&&+C\nu^{-1}\theta^{2}(1-\theta^{2})\Delta t^{3}\int_{t_{n}}^{t_{n+1}}\norm{\nabla\bu_{tt}}^{2}dt
+C\nu^{-3}\epsilon^{2}\theta^2\Delta t^{3}\int_{t_{n-1}}^{t_{n+1}}\norm{\nabla\bu_{tt}}^{2}dt\nonumber\\
&&+C\beta_{T}^2\norm{\bfg}_{\infty}^2\nu^{-1}\theta^2(1+\theta)^2\Delta t^3\int_{t_{n-1}}^{t_{n+1}}\norm{\nabla T_{tt}}^2dt+C\beta_{S}^2\norm{\bfg}_{\infty}^2\nu^{-1}\theta^2(1+\theta)^2\Delta t^3\int_{t_{n-1}}^{t_{n+1}}\norm{\nabla S_{tt}}^2dt\nonumber\\
  &&+\dfrac{(\theta^2+4)\nu^{-1}}{\Delta t}\int_{t_{n-1}}^{t_{n+1}}\norm{\bfeta^{\bu}_{t}}^2dt+C\nu \bigg(\big(\theta+\eps\theta\nu^{-1}\big)^{2}\norm{\nabla\bfeta^{\bu}_{n+1}}^{2}+\big(1-\theta-2\epsilon\theta\nu^{-1}\big)^{2}\norm{\nabla\bfeta^{\bu}_{n}}^{2}\nonumber\\
&&+\epsilon^{2}\theta^{2}\nu^{-2}\norm{\nabla\bfeta^{\bu}_{n-1}}^{2}\bigg)+\nu^{-1}\bigg((\theta+\epsilon \theta \nu^{-1})^2\norm{\nabla\bfeta^{\bu}_{n+1}}^2+(1-\theta-2\epsilon\theta\nu^{-1})^2\norm{\nabla\bfeta^{\bu}_{n}}^2\nonumber\\
&&+\epsilon^2\theta^2\nu^{-2}\norm{\nabla\bfeta^{\bu}_{n-1}}^2\bigg)+\nu^{-1}\bigg((\theta+1)^2\norm{\nabla\bu_{n}}^2+\theta^2\norm{\nabla\bu_{n-1}}^2\bigg)\bigg((\theta+\epsilon\theta\nu^{-1})^2\norm{\nabla\bfeta^{\bu}_{n+1}}^2\nonumber\\
&&+(1-\theta-2\epsilon\theta\nu^{-1})^2\norm{\nabla\bfeta^{\bu}_{n}}^2+\epsilon^2\theta^2\nu^{-2}\norm{\nabla\bfeta^{\bu}_{n-1}}^2\bigg)+\nu^{-1}\bigg((\theta+\epsilon\theta\nu^{-1})^2\norm{\nabla\bu_{n+1}^{h}}^2\nonumber
\end{eqnarray}
\begin{eqnarray}
&&+(1-\theta-2\epsilon\theta\nu^{-1})^2\norm{\nabla\bu_{n}^{h}}^2+\epsilon^2\theta^2\nu^{-2}\norm{\nabla\bu_{n-1}^{h}}^2\bigg)\big((\theta+1)^2\norm{\nabla\bfeta^{\bu}_{n}}^2+\theta^2\norm{\nabla\bfeta^{\bu}_{n-1}}^2\big)\nonumber\\
&&+C\beta_{T}^2\norm{\bfg}_{\infty}^2\nu^{-1}\bigg((\theta+1)^2\norm{\nabla\eta^{T}_{n}}^2+\theta^2\norm{\nabla\eta^{T}_{n-1}}^2\bigg)+C\beta_{S}^2\norm{\bfg}_{\infty}^2\nu^{-1}\bigg((\theta+1)^2\norm{\nabla\eta^{S}_{n}}^2\nonumber\\
&&+\theta^2\norm{\nabla\eta^{S}_{n-1}}^2\bigg)+\nu^{-1}\bigg(\norm{F_{n+\theta}^{\eps,\nu}(\nabla\bu^{h})}_{\infty}^{2}+\norm{F_{n+\theta}^{\eps,\nu}(\bu^{h})}_{\infty}^{2}\bigg)\bigg((\theta+1)^2\norm{\bphi^{h}_{n}}^2+\theta^2\norm{\bphi^{h}_{n-1}}^2\bigg)\nonumber\\
&&+C\beta_{T}^2\norm{\bfg}_{\infty}^2\nu^{-1}\bigg((\theta+1)^2\norm{\xi^{h}_{n}}^2+\theta^2\norm{\xi^{h}_{n-1}}^2\bigg)\nonumber\\
&&+C\beta_{S}^2\norm{\bfg}_{\infty}^2\nu^{-1}\bigg((\theta+1)^2\norm{\zeta^{h}_{n}}^2+\theta^2\norm{\zeta^{h}_{n-1}}^2\bigg)\bigg].\nonumber
\end{eqnarray}
Next we observe that due to Lemma $\ref{lem:gnorm}$ and approximation results (\ref{ap1}), (\ref{ap2}) and (\ref{ap3}) we have;
\begin{eqnarray}
\lefteqn{\norm{\bphi^{h}_{N}}^2+\dfrac{1}{2\theta+1}\sum_{n=1}^{N-1} \norm{\bphi^{h}_{n+1}-2\bphi^{h}_{n}+\bphi^{h}_{n-1}}_{F}^{2}+ \dfrac{ 2\Delta t\nu}{2\theta+1} \sum_{n=1}^{N-1}\norm{{F}_{n+\theta}^{\eps,\nu}(\nabla \bphi^{h})}^{2}+\dfrac{4Da^{-1}\Delta t}{2\theta+1}\sum_{n=1}^{N-1}\norm{{F}_{n+\theta}^{\eps,\nu}( \bphi^{h})}^{2}}\nonumber\\
&\leq&\big(\dfrac{2\theta-1}{2\theta+1}\big)^{N}\norm{\bphi^{h}_{0}}^2+2\bigg(1-\big(\dfrac{2\theta-1}{2\theta+1}\big)^{N}\bigg)\Bigg[\norm{ \begin{bmatrix}
	\bphi ^{h}_1 \\
	\bphi^{h}_{0}
	\end{bmatrix}} _{G}^{2} +C\Big(\nu^{-1}\Delta t^4 \norm{|p_{tt}|}_{2,0}^{2}+\nu^{-1}h^{2k+2}\norm{|p|}_{2,k+1}^{2}\nonumber\\
	&&+\nu^{-1}\Delta t^4\norm{|\bu_{ttt}|}_{2,0}^{2}
	+\nu\Delta t^4\norm{|\nabla\bu_{tt}|}_{2,0}^{2}+\nu^{-1}\Delta t^4\norm{|\nabla\bu_{tt}|}_{2,0}^{2}+\nu^{-1}\Delta t^4\norm{|\nabla\bu|}_{\infty,0}^{2}\norm{|\nabla\bu_{tt}|}_{2,0}^{2}\nonumber\\
&&+\nu^{-1}\beta_{T}^2\norm{\bfg}_{\infty}^2\Delta t^4\norm{|\nabla T_{tt}|}_{2,0}^{2}+\nu^{-1}\beta_{S}^2\norm{\bfg}_{\infty}^2\Delta t^4\norm{|\nabla S_{tt}|}_{2,0}^{2}+\nu^{-1}h^{2k+2}\norm{|\bu_{t}|}_{2,k+1}^{2}\nonumber\\
&&+\nu h^{2k}\norm{|\bu|}_{2,k+1}^{2}+\nu^{-1} h^{2k}\norm{|\bu|}_{2,k+1}^{2}+\nu^{-1}h^{2k}\norm{|\nabla \bu|}_{\infty}^{2}\norm{|\bu|}_{2,k+1}^{2}+\nu^{-1}h^{2k}\beta_{T}^2\norm{\bfg}_{\infty}^2\norm{|T|}_{2,k+1}^{2}\nonumber\\
&&+\nu^{-1}h^{2k}\beta_{S}^2\norm{\bfg}_{\infty}^2\norm{|S|}_{2,k+1}^{2}\Big)+C\nu^{-1}\Delta t\sum_{n=0}^{N-1}\norm{\bphi^{h}_{n}}^2+C\nu^{-1}\beta_{T}^2\norm{\bfg}_{\infty}^2\Delta t \sum_{n=0}^{N-1}\norm{\xi^{h}_{n}}^2\nonumber\\
&&+C\nu^{-1}\beta_{S}^2\norm{\bfg}_{\infty}^2\Delta t \sum_{n=0}^{N-1}\norm{\zeta^{h}_{n}}^2\Bigg] .\label{velbound}
\end{eqnarray}
The proof of temperature proceeds along the lines of the velocity error estimation.
The first term  $E_2(\bu_{n+\theta},T_{n+\theta};F_{n+\theta}^{\eps_1,\gamma}(\xi^{h}))$ in (\ref{eqntem}) is bounded by  using Cauchy-Schwarz, Young's inequalities, expansion of operators and Taylor's theorem. Then, one gets
\begin{eqnarray}
(D_{n+\theta}(T)-T_{t}(t_{n+\theta}),F_{n+\theta}^{\eps_1,\gamma}(\xi^{h}))
&\leq&\dfrac{\gamma}{64}\norm{F_{n+\theta}^{\eps_1,\gamma}(\nabla\xi^{h})}^{2}+C\gamma^{-1}\theta^{6}\Delta t^{3}\int_{t_{n-1}}^{t_{n+1}}\norm{T_{ttt}}^{2}dt \nonumber
\end{eqnarray}
and
\begin{eqnarray}
\gamma(\nabla(F_{n+\theta}^{\eps_1,\gamma}(T)-T_{n+\theta}),\nabla F_{n+\theta}^{\eps_1,\gamma}(\xi^{h}))
&\leq&\dfrac{\gamma}{64}\norm{F_{n+\theta}^{\eps_1,\gamma}(\nabla\xi^{h})}^{2}+C\gamma\theta^{2}(1-\theta)^{2}\Delta t^{3}\int_{t_{n}}^{t_{n+1}}\norm{\nabla T_{tt}}^{2}dt\nonumber\\
&&+C\gamma^{-1}\epsilon_1^{2}\theta^{2}\Delta t^{3}\int_{t_{n-1}}^{t_{n+1}}\norm{\nabla T_{tt}}^{2}dt.\nonumber
\end{eqnarray}
The trilinear terms are bounded similar as in the velocity case
\begin{eqnarray}
\lefteqn{c^{*}(H_{n+\theta}(\bu)-\bu_{n+\theta},F_{n+\theta}^{\eps_1,\gamma}(T),F_{n+\theta}^{\eps_1,\gamma}(\xi^{h}))}\nonumber\\
&\leq&\dfrac{\gamma}{64}\norm{F_{n+\theta}^{\eps_1,\gamma}(\nabla\xi^{h})}^{2}+C\gamma^{-1}\theta^{2}(1+\theta)^2\Delta t^{3}\norm{F_{n+\theta}^{\eps_1,\gamma}(\nabla T)}^{2}\int_{t_{n-1}}^{t_{n+1}}\norm{\nabla \bu_{tt}}^{2}dt \nonumber
\end{eqnarray}
and
\begin{eqnarray}
\lefteqn{c^{*}(\bu_{n+\theta},F_{n+\theta}^{\eps_1,\gamma}(T)-T_{n+\theta},F_{n+\theta}^{\eps_1,\gamma}(\xi^{h}))}\nonumber\\
&\leq&\dfrac{\gamma}{64}\norm{F_{n+\theta}^{\eps_1,\gamma}(\nabla\xi^{h})}^{2}+C\gamma^{-1}\theta^{2}(1-\theta)^{2}\Delta t^3\norm{\nabla \bu_{n+\theta}}^{2}\int_{t_{n}}^{t_{n+1}}\norm{\nabla T_{tt}}^{2}dt\nonumber\\
&&+C\gamma^{-3}\epsilon_1^{2}\Delta t^{3}\norm{\nabla \bu_{n+\theta}}^{2}\int_{t_{n-1}}^{t_{n+1}}\norm{\nabla T_{tt}}^{2}dt.\nonumber
\end{eqnarray}
In a similar manner, the remainder terms in (\ref{eqntem}) follows analogously the proof of the velocity . One gets the bound
\begin{eqnarray}
(D_{n+\theta}(\eta^{T}),F_{n+\theta}^{\eps_1,\gamma}(\xi^{h}))
&\leq&\dfrac{C\gamma^{-1}(\theta^{2}+4)}{\Delta t }\int_{t_{n-1}}^{t_{n+1}}\norm{\eta^{T}_{t}}^{2}dt+\dfrac{\gamma}{64}\norm{F_{n+\theta}^{\eps_1,\gamma}(\nabla\xi^{h})}^{2}\nonumber
\end{eqnarray}
and the following bound for the viscous term
\begin{eqnarray}
\lefteqn{\gamma(F_{n+\theta}^{\eps_1,\gamma}(\nabla\eta^{T}),F_{n+\theta}^{\eps_1,\gamma}(\nabla\xi^{h}))}\nonumber\\
&\leq& C\gamma \bigg(\big(\theta+\dfrac{\eps_1\theta}{\gamma}\big)^{2}\norm{\nabla\eta^{T}_{n+1}}^{2}+\big(1-\theta-\dfrac{2\epsilon_1\theta}{\gamma}\big)^{2}\norm{\nabla\eta^{T}_{n}}^{2}+\dfrac{\epsilon_1^{2}\theta^{2}}{\gamma^{2}}\norm{\nabla\eta^{T}_{n-1}}^{2}\bigg) +\dfrac{\gamma}{64}\norm{F_{n+\theta}^{\eps_1,\gamma}(\nabla\xi^{h})}^{2}.\nonumber
\end{eqnarray}
Applying Cauchy-Schwarz, expansion of the operators along with the Young's inequality yields
\begin{eqnarray}
\lefteqn{c^{*}(H_{n+\theta}(\bu),F_{n+\theta}^{\eps_1,\gamma}(\eta^{T}),F_{n+\theta}^{\eps_1,\gamma}(\xi^{h}))}\nonumber\\
&\leq& C\gamma^{-1}\bigg((\theta+1)^{2}\norm{\nabla \bu_{n}}^{2}+\theta^{2}\norm{\nabla \bu_{n-1}}^{2}\bigg)\bigg(\big(\theta+\dfrac{\epsilon_1\theta}{\gamma}\big)^{2}\norm{\nabla\eta^{T}_{n+1}}^{2}+\big(1-\theta-\dfrac{2\epsilon_1\theta}{\gamma}\big)^{2}\norm{\nabla\eta^{T}_{n}}^{2}\nonumber\\
&&+\dfrac{\epsilon_1^{2}\theta^{2}}{\gamma^{2}}\norm{\nabla\eta^ T_ {n-1}}^{2}\bigg)+\dfrac{\gamma}{64}\norm{F_{n+\theta}^{\eps_1,\gamma}(\nabla\xi^{h})}^{2} \nonumber
\end{eqnarray}
and
\begin{eqnarray}
\lefteqn{c^{*}(H_{n+\theta}(\bfeta^{\bu}),F_{n+\theta}^{\eps_1,\gamma }(T^{h}),F_{n+\theta}^{\eps_1,\gamma}(\phi_{T}))}\nonumber\\
&\leq& C\gamma^{-1} \bigg(\big(\theta+\epsilon_1\theta\kappa^{-1}\big)^{2}\norm{\nabla{T}_{n+1}^{h}}^{2}+\big(1-\theta-2\epsilon_1\theta\gamma^{-1}\big)^{2}\norm{\nabla T_{n}^{h}}^{2}+\epsilon_1^{2}\theta^{2}\gamma^{-2}\norm{\nabla T_{n-1}^{h}}^{2}\bigg)\nonumber\\
&&\times\big((\theta+1)^2\norm{\nabla\bfeta^{\bu}_{n}}^{2}+\theta^2\norm{\nabla\bfeta^{\bu}_{n-1}}^2\big)+\dfrac{\gamma}{64}\norm{F_{n+\theta}^{\eps_1,\gamma}(\nabla\phi_{T})}^{2}. \nonumber
\end{eqnarray}
Finally, the last trilinear term is bounded by Lemma \ref{sb}:
\begin{eqnarray}
\lefteqn{c^{*}(H_{n+\theta}(\phi^{h}),F_{n+\theta}^{\eps_1,\gamma}(T^{h}),F_{n+\theta}^{\eps_1,\gamma}(\xi^{h}))}\nonumber\\
&\leq& C\gamma^{-1}\bigg(\norm{F_{n+\theta}^{\eps_1,\gamma}(\nabla T^{h})}_{\infty}^2+\norm{F_{n+\theta}^{\eps_1,\gamma}(T^{h})}_{\infty}^2\bigg) \big((\theta+1)^2\norm{\bphi^{h}_{n}}^2+\theta^2\norm{\bphi^{h}_{n-1}}^2\big)+\dfrac{\gamma}{64}\norm{F_{n+\theta}^{\eps_1,\gamma}(\nabla\bphi^{h})}^{2}. \nonumber
\end{eqnarray}
Next insert the above bounds into the $(\ref{errtem})$  and take the sum from $n=1$ to $n=N-1$ :
\begin{eqnarray}
\lefteqn{\norm{ \begin{bmatrix}
	\xi^{h}_{N} \\
	\xi^{h}_{N-1}
	\end{bmatrix}} _{G}^{2}  	+\dfrac{1}{4}\sum_{n=1}^{N-1} \norm{\xi^{h}_{n+1}-2\xi_{n}^{h}+\xi^{h}_{n-1}}_{F}^{2} + \dfrac{ \Delta t\gamma}{2} \sum_{n=1}^{N-1}\norm{{F}_{n+\theta}^{\eps_1,\gamma}(\nabla \xi^{h})}^{2}} \nonumber
\\
&\leq&\norm{ \begin{bmatrix}
	\xi^{h}_1 \\
	\xi^{h}_{0}
	\end{bmatrix}} _{G}^{2} +C\Delta t\sum_{n=1}^{N-1}\bigg[
 \gamma^{-1}\theta^6\Delta t^3\int_{t_{n-1}}^{t_{n+1}}\norm{T_{ttt}}^2dt+\gamma\theta^2(1-\theta)^2\Delta t^3\int_{t_{n}}^{t_{n+1}}\norm{\nabla T_{tt}}^2dt\nonumber\\
&&+\gamma^{-1}\epsilon_1^2\theta^2\Delta t^3\int_{t_{n-1}}^{t_{n+1}}\norm{\nabla T_{tt}}^2dt+C\gamma^{-1}\theta^2(1+\theta)^2\Delta t^3\norm{F_{n+\theta}^{\eps_1,\gamma}(\nabla T)}^2\int_{t_{n-1}}^{t_{n+1}}\norm{\nabla \bu_{tt}}^2dt\nonumber\\
&&+C\gamma^{-1}\theta^2(1-\theta)^2\Delta t ^3\norm{\nabla \bu_{n+\theta}}^2\int_{t_{n}}^{t_{n+1}}\norm{\nabla T_{tt}}^2dt+C\gamma^{-3}\epsilon_1^2\theta^2\Delta t^2\norm{\nabla \bu_{n+\theta}}^2\int_{t_{n-1}}^{t_{n+1}}\norm{\nabla T_{tt}}^2dt\nonumber\\
&& +\dfrac{(\theta^2+4)\gamma^{-1}}{\Delta t }\int_{t_{n-1}}^{t_{n+1}}\norm{\eta^{T}_{t}}^2dt+\gamma\bigg((\theta+\epsilon_1\theta\gamma^{-1})^2\norm{\nabla\eta^{T}_{n+1}}^2+(1-\theta-2\epsilon_1\theta\gamma^{-1})^2\norm{\nabla\eta^{T}_{n}}^2\nonumber\\
&&+\epsilon_1^2\theta^2\gamma^{-2}\norm{\nabla\eta^{T}_{n-1}}^2\bigg)+\gamma^{-1}\bigg((\theta+1)^2\norm{\nabla \bu_{n}}^2+\theta^2\norm{\nabla \bu_{n-1}}^2\bigg)\bigg((\theta+\epsilon_1\theta\gamma^{-1})^2\norm{\nabla\eta^{T}_{n+1}}^2\nonumber\\
&&+(1-\theta-2\epsilon_1\theta\gamma^{-1})^2\norm{\nabla\eta^{T}_{n}}^2+\epsilon_1^2\theta^2\gamma^{-2}\norm{\nabla\eta^{T}_{n-1}}^2\bigg)+\gamma^{-1}\bigg((\theta+\epsilon_1\theta\gamma^{-1})^2\norm{\nabla T_{n+1}^{h}}^2\nonumber\\
&&+(1-\theta-2\epsilon_1\theta\gamma^{-1})^2\norm{\nabla T_{n}^{h}}^2+\epsilon_1^2\theta^2\gamma^{-2}\norm{\nabla T_{n-1}^{h}}^2\bigg)\bigg((\theta+1)^2\norm{\nabla\bfeta^{\bu}_{n}}^2+\theta^2\norm{\nabla\bfeta^{\bu}_{n-1}}^2\bigg)\nonumber\\
&&+\gamma^{-1}\bigg(\norm{F_{n+\theta}^{\eps_1,\gamma}(\nabla T^{h})}_{\infty}^{2}+\norm{F_{n+\theta}^{\eps_1,\gamma}(T^{h})}_{\infty}^{2}\bigg)\bigg((\theta+1)^2\norm{\bphi^{\bu}_{n}}^2+\theta^2\norm{\bphi^{\bu}_{n-1}}^2\bigg)\bigg].\nonumber
\end{eqnarray}
Applying the stability bound  Lemma $\ref{lem:gnorm}$ in the last estimation yields;
\begin{eqnarray}
\lefteqn{\norm{\xi_{N}^{h}}^2+\dfrac{1}{2\theta+1}\sum_{n=1}^{N-1} \norm{\xi^{h}_{n+1}-2\xi^{h}_{n}+\xi^{h}_{n-1}}_{F}^{2}+\dfrac{ 2\Delta t\gamma}{2\theta+1} \sum_{n=1}^{N-1}\norm{{F}_{n+\theta}^{\eps_1,\gamma}(\nabla \xi^{h})}^{2}}\nonumber\\
&\leq&\bigg(\dfrac{2\theta-1}{2\theta+1}\bigg)^{N}\norm{\xi^{h}_{0}}^2+2\bigg(1-\bigg(\dfrac{2\theta-1}{2\theta+1}\bigg)^{N}\bigg)\Bigg[\norm{ \begin{bmatrix}
	\xi _{1}^h \\
	\xi_{0}^{h}
	\end{bmatrix}} _{G}^{2} \nonumber\\
&&+C\Big(
\gamma^{-1}\Delta t^4\norm{|T_{ttt}|}_{2,0}^{2}+\gamma\Delta t^4\norm{|\nabla T_{tt}|}_{2,0}^{2}+\gamma^{-1}\Delta t^4\norm{|\nabla T_{tt}|}_{2,0}^{2}+\gamma^{-1}\Delta t^4\norm{\nabla T}_{\infty}^{2}\norm{|\nabla  \bu_{tt}|}_{2,0}^{2}\nonumber\\
&&+\gamma^{-1}\Delta t^4\norm{|\nabla\bu |}_{\infty}^{2}\norm{|\nabla T_{tt}|}_{2,0}^{2}+\gamma^{-1}h^{2k+2}\norm{|T_{t}|}_{2,k+1}^{2}+\gamma h^{2k}\norm{|T|}_{2,k+1}^{2}\nonumber\\ &&+\gamma^{-1}h^{2k}\norm{|\nabla \bu|}_{\infty}^{2}\norm{|T|}_{2,k+1}^{2}+\gamma^{-1}h^{2k}\norm{|\nabla T|}_{\infty}^{2}\norm{|\bu|}_{2,k+1}^{2}\Big)+C\gamma^{-1}\Delta t\sum_{n=0}^{N-1}\norm{\bphi^{h}_{n}}^2\Bigg]. \label{tembound}
\end{eqnarray}
Repeating the similar arguments of the temperature error, the concentration error equation (\ref{eqncont}) is estimated by
\begin{eqnarray}
\lefteqn{\norm{\zeta_{N}^{h}}^2+\dfrac{1}{2\theta+1}\sum_{n=1}^{N-1} \norm{\zeta^{h}_{n+1}-2\zeta^{h}_{n}+\zeta^{h}_{n-1}}_{F}^{2}+\dfrac{ 2\Delta t D_c}{2\theta+1} \sum_{n=1}^{N-1}\norm{{F}_{n+\theta}^{\eps_2,D_c}(\nabla \zeta^{h})}^{2}}\nonumber\\
&\leq&\bigg(\dfrac{2\theta-1}{2\theta+1}\bigg)^{N}\norm{\zeta^{h}_{0}}^2+2\bigg(1-\big(\dfrac{2\theta-1}{2\theta+1}\big)^{N}\bigg)\Bigg[\norm{ \begin{bmatrix}
	\zeta _{1}^h \\
	\zeta_{0}^{h}
	\end{bmatrix}} _{G}^{2} \nonumber\\
&&+C\Big(
D_c^{-1}\Delta t^4\norm{|S_{ttt}|}_{2,0}^{2}+D_c\Delta t^4\norm{|\nabla S_{tt}|}_{2,0}^{2}+D_c^{-1}\Delta t^4\norm{|\nabla S_{tt}|}_{2,0}^{2}+D_c^{-1}\Delta t^4\norm{\nabla S}_{\infty}^{2}\norm{|\nabla  \bu_{tt}|}_{2,0}^{2}\nonumber
\end{eqnarray}
\begin{eqnarray}
&&+D_c^{-1}\Delta t^4\norm{|\nabla\bu |}_{\infty}^{2}\norm{|\nabla S_{tt}|}_{2,0}^{2}+D_c^{-1}h^{2k+2}\norm{|S_{t}|}_{2,k+1}^{2}+D_c h^{2k}\norm{|S|}_{2,k+1}^{2}\nonumber\\ &&+D_c^{-1}h^{2k}\norm{|\nabla \bu|}_{\infty}^{2}\norm{|S|}_{2,k+1}^{2}+D_c^{-1}h^{2k}\norm{|\nabla S|}_{\infty}^{2}\norm{|\bu|}_{2,k+1}^{2}\Big)+C D_c^{-1}\Delta t\sum_{n=0}^{N-1}\norm{\bphi^{h}_{n}}^2\Bigg]. \label{contbound}
\end{eqnarray}
Now, summing $(\ref{velbound})$, (\ref{tembound}) and (\ref{contbound}) we obtain
\begin{eqnarray}
\lefteqn{\norm{\bphi^{h}_{N}}^2+\norm{\xi^{h}_{N}}^2+\norm{\zeta^{h}_{N}}^2+\dfrac{1}{2\theta+1}\sum_{n=1}^{N-1} \norm{\bphi^{h}_{n+1}-2\bphi^{h}_{n}+\bphi^{h}_{n-1}}_{F}^{2}+ \dfrac{ 2\Delta t\nu}{2\theta+1} \sum_{n=1}^{N-1}\norm{{F}_{n+\theta}^{\eps,\nu}(\nabla \bphi^{h})}^{2}}\nonumber\\
\lefteqn{+\dfrac{ 4\Delta tDa^{-1}}{2\theta+1} \sum_{n=1}^{N-1}\norm{{F}_{n+\theta}^{\eps,\nu}( \bphi^{h})}^{2}+\dfrac{1}{2\theta+1}\sum_{n=1}^{N-1} \norm{\xi^{h}_{n+1}-2\xi^{h}_{n}+\xi^{h}_{n-1}}_{F}^{2} + \dfrac{ 2\Delta t\gamma}{2\theta+1} \sum_{n=1}^{N-1}\norm{{F}_{n+\theta}^{\eps,\gamma}(\nabla \xi^{h})}^{2}}\nonumber\\
&&+\dfrac{1}{2\theta+1}\sum_{n=1}^{N-1} \norm{\zeta^{h}_{n+1}-2\zeta^{h}_{n}+\zeta^{h}_{n-1}}_{F}^{2} + \dfrac{ 2\Delta tD_c}{2\theta+1} \sum_{n=1}^{N-1}\norm{{F}_{n+\theta}^{\eps, D_c}(\nabla \zeta^{h})}^{2}\nonumber\\
&\leq&\bigg(\dfrac{2\theta-1}{2\theta+1}\bigg)^{N}\Big(\norm{\bphi^{h}_{0}}^2+\norm{\xi^{h}_{0}+\norm{\zeta^{h}_{0}}^2}^2\Big)+2\bigg(1-\bigg(\dfrac{2\theta-1}{2\theta+1}\bigg)^{N}\bigg)\Bigg[\norm{ \begin{bmatrix}
	\bphi^{h}_1 \\
	\bphi^{h}_{0}
	\end{bmatrix}} _{G}^{2}+\norm{ \begin{bmatrix}
	\xi^{h}_1 \\
	\xi^{h}_{0}
	\end{bmatrix}} _{G}^{2}+\norm{ \begin{bmatrix}
	\zeta^{h}_1 \\
	\zeta^{h}_{0}
	\end{bmatrix}} _{G}^{2}\nonumber\\
&&+C\Big(\nu^{-1}\Delta t^4 \norm{|p_{tt}|}_{2,0}^{2}+\nu^{-1}h^{2k+2}\norm{|p|}_{2,k+1}^{2}+\nu^{-1}\Delta t^4\norm{|\bu_{ttt}|}_{2,0}^{2}
	+\nu\Delta t^4\norm{|\nabla\bu_{tt}|}_{2,0}^{2}\nonumber\\
	&&+\nu^{-1}\Delta t^4\norm{|\nabla\bu_{tt}|}_{2,0}^{2}+\nu^{-1}\Delta t^4\norm{|\nabla\bu|}_{\infty,0}^{2}\norm{|\nabla\bu_{tt}|}_{2,0}^{2}+\nu^{-1}\beta_{T}^2\norm{\bfg}_{\infty}^2\Delta t^4\norm{|\nabla T_{tt}|}_{2,0}^{2}\nonumber\\
	&&+\nu^{-1}\beta_{S}^2\norm{\bfg}_{\infty}^2\Delta t^4\norm{|\nabla S_{tt}|}_{2,0}^{2}+\nu^{-1}h^{2k+2}\norm{|\bu_{t}|}_{2,k+1}^{2}+\nu h^{2k}\norm{|\bu|}_{2,k+1}^{2}+\nu^{-1} h^{2k}\norm{|\bu|}_{2,k+1}^{2}\nonumber\\
	&&+\nu^{-1}h^{2k}\norm{|\nabla \bu|}_{\infty}^{2}\norm{|\bu|}_{2,k+1}^{2}+\nu^{-1}h^{2k}\beta_{T}^2\norm{\bfg}_{\infty}^2\norm{|T|}_{2,k+1}^{2}+\nu^{-1}h^{2k}\beta_{S}^2\norm{\bfg}_{\infty}^2\norm{|S|}_{2,k+1}^{2}\nonumber\\
&&+\gamma^{-1}\Delta t^4\norm{|T_{ttt}|}_{2,0}^{2}+\gamma\Delta t^4\norm{|\nabla T_{tt}|}_{2,0}^{2}+\gamma^{-1}\Delta t^4\norm{|\nabla T_{tt}|}_{2,0}^{2}+\gamma^{-1}\Delta t^4\norm{\nabla T}_{\infty}^{2}\norm{|\nabla  \bu_{tt}|}_{2,0}^{2}\nonumber\\
&&+\gamma^{-1}\Delta t^4\norm{|\nabla\bu |}_{\infty}^{2}\norm{|\nabla T_{tt}|}_{2,0}^{2}+\gamma^{-1}h^{2k+2}\norm{|T_{t}|}_{2,k+1}^{2}+\gamma h^{2k}\norm{|T|}_{2,k+1}^{2}\nonumber\\ &&+\gamma^{-1}h^{2k}\norm{|\nabla \bu|}_{\infty}^{2}\norm{|T|}_{2,k+1}^{2}+\gamma^{-1}h^{2k}\norm{|\nabla T|}_{\infty}^{2}\norm{|\bu|}_{2,k+1}^{2}+D_c^{-1}\Delta t^4\norm{|S_{ttt}|}_{2,0}^{2}\nonumber\\
&&+D_c\Delta t^4\norm{|\nabla S_{tt}|}_{2,0}^{2}+D_c^{-1}\Delta t^4\norm{|\nabla S_{tt}|}_{2,0}^{2}+D_c^{-1}\Delta t^4\norm{\nabla S}_{\infty}^{2}\norm{|\nabla  \bu_{tt}|}_{2,0}^{2}\nonumber\\
&&+D_c^{-1}\Delta t^4\norm{|\nabla\bu |}_{\infty}^{2}\norm{|\nabla S_{tt}|}_{2,0}^{2}+D_c^{-1}h^{2k+2}\norm{|S_{t}|}_{2,k+1}^{2}+D_c h^{2k}\norm{|S|}_{2,k+1}^{2}\nonumber\\ &&+D_c^{-1}h^{2k}\norm{|\nabla \bu|}_{\infty}^{2}\norm{|S|}_{2,k+1}^{2}+D_c^{-1}h^{2k}\norm{|\nabla S|}_{\infty}^{2}\norm{|\bu|}_{2,k+1}^{2}\Big)\nonumber\\
&&+\tilde{C}\Delta t\sum_{n=0}^{N-1}(\norm{\bphi^{h}_{n}}^2+\norm{\xi^{h}_{n}}^2+\norm{\zeta^{h}_{n}}^2)\Bigg]\label{e}
\end{eqnarray}
where $\tilde{C}:=\tilde{C}(\nu^{-1},\gamma^{-1}, D_S^{-1}, \beta_{T}^2, \beta_{S}^2, \norm{\bfg}_{\infty}^2)$.\\
We next apply the Lemma \ref{gr} and use the following inequality in (\ref{e}),

$$0\leq\big(\dfrac{2\theta-1}{2\theta+1}\big)^{N}\leq 1\quad \mbox{for any}\quad N \ge 0.$$
The final result follows from the triangle inequality and Lemma $\ref{lem:gnorm}$.
\end{proof}

\section{Numerical Experiments}
\noindent In this section, we perform two numerical tests in order to show the efficiency of proposed method and validate the theoretical findings. The first example is verification of the numerical convergence rates for an analytic test problem with known solution. The second example is of more practical interest; it is a buoyancy driven cavity flow example in a tall rectangular cavity.

The simulations are performed with the finite element software package FreeFem++ \cite{hec}. In all computations, the Taylor-Hood finite element for velocity and pressure, and  piecewise quadratics for temperature and concentration are used on triangular grids. The Darcy flow regime ($Da=\infty$) is assumed for all tests. In order to see the effect of stabilization parameters, the results are also compared with the usual BDF2LE method, which is obtained through picking $\epsilon = \epsilon_1 =\epsilon_2 =0$ and $\theta=1$ (unstabilized case) in (\ref{forvel})-(\ref{forcont}), which gives
\begin{gather}
\frac{(\theta+\frac{1}{2})\bu_{n+1}-2\theta \bu_n+(\theta-\frac {1}{2})\bu_{n-1}}{\Delta t} -\theta \nu\Delta \bu_{n+1}-(\nu-\theta \nu)\Delta \bu_n \nonumber
\\
+((\theta+1)\bu_n-\theta \bu_{n-1})\cdot \nabla (\theta \bu_{n+1}+ (1-\theta \bu_{n}))  \label{bdforvel}
\\
+\theta \nabla p_{n+1}+(1-\theta)\nabla p_n=\Big(\beta_T((\theta+1)T_n-\theta T_{n-1})+ \beta_S((\theta+1)S_n-\theta S_{n-1}))\Big)\bfg +\bff_{n+\theta}\nonumber
\\
\nabla \cdot \bu_{n+1}=0
\\
\frac{(\theta+\frac{1}{2})T_{n+1}-2\theta T_n+(\theta-\frac {1}{2})T_{n-1}}{\Delta t} -\theta \gamma\Delta T_{n+1}-(\gamma-\theta \gamma)\Delta T_n \nonumber
\\
+((\theta+1)\bu_n-\theta \bu_{n-1})\cdot \nabla (\theta T_{n+1}+(1-\theta T_{n}))
=\varphi_{n+\theta} \label{bdfortemp}
\\
\frac{(\theta+\frac{1}{2})S_{n+1}-2\theta S_n+(\theta-\frac {1}{2})S_{n-1}}{\Delta t} -\theta D_c\Delta S_{n+1}-(D_c-\theta D_c)\Delta S_n \nonumber
\\
+((\theta+1)\bu_n-\theta \bu_{n-1})\cdot \nabla (\theta S_{n+1}+(1-\theta S_{n}))
=\psi_{n+\theta}. \label{bdforcont}
\end{gather}
Here, the forcing functions $\bff_{n+\theta}, \varphi_{n+\theta}$ and $\psi_{n+\theta}$ are included in (\ref{bdforvel})-(\ref{bdforcont}). We also note that the similar results are also obtained with the CNLE with the choices of parameters $\epsilon = \epsilon_1 =\epsilon_2 =0$ and $\theta = 1/2$.

\subsection{Numerical convergence study}

In this subsection, we show that the theoretical orders of the errors are also obtained through a numerical simulation. In order to do so, we pick the known-solution
\begin{align}\label{truesol}
\bu=\left(%
\begin{array}{c}
\cos(y) \\
\sin(x)%
\end{array}%
\right)e^t,\quad
p=(x-y)(1+t),\quad
T=\sin(x+y)e^{1-t}, \quad
S=\cos(x+y)e^{1-t}.
\end{align}
with the parameters $Pr=D_c= \gamma=\beta_T=\beta_S=1$ and the right hand side functions $\bff,\varphi$ and $\psi$ are chosen such that (\ref{truesol}) satisfies (\ref{bous}).

We will present computational results with $\epsilon = \epsilon_1 =\epsilon_2 =0$, $\theta=1$ and $\epsilon = \epsilon_1= \epsilon_2 =1$ (with stabilization) in a unit square. The final time and the time step size are chosen as $t=10^{-1}$ and  $\Delta t = t/16$.
To test the spatial convergence, we fix the time step size and calculate the errors for varying $h$ and consider the velocity errors in the discrete norm  $L^2(0,T;{\bf H}^1(\Omega))$
 $$\|\textbf{u}-\textbf{u}^h\|_{2,1}=\left\{\Delta t \sum_{n=1}^{N}\|\textbf{u}(t^n)-\textbf{u}_{n}^{h}\|^{2}\right\}^{1/2}.$$ The results of different $\epsilon$, $\epsilon_1$ and $\epsilon_2$ values for the spatial errors and error rates are given in Table \ref{table:tab1} and Table \ref{table:tab2}. One can see that the orders of convergence of $\|\textbf{u}-\textbf{u}^h\|_{2,1}$, $\|T-T^h\|_{2,1}$ are quadratic, which is an optimal order for both BDF2LE and for the proposed method. We note that because of the parameter choices of this numerical test, the errors for $\|S-S^h\|_{2,1}$ are similar.
\begin{table}[h!]
\begin{center}

\begin{tabular}{|c|c|c|c|c|}
  \hline
  $h$ & $\|\textbf{u}-\textbf{u}^h\|_{2,1}$ & Rate &$\|T-T^h\|_{2,1}$ & Rate\\
  \hline
   1/4&1.606e-3 &--& 3.99e-3 &--   \\
   \hline
  1/8&4.357e-4 &1.88  &1.00e-3  &1.99   \\
  \hline
  1/16&1.124e-4 &1.95  &2.527e-4  &1.98   \\
  \hline
  1/32&2.848e-5 &1.98 & 6.318e-5 & 2.00   \\
  \hline
  1/64&7.171e-6 &1.98  &1.592e-5  &1.98    \\
  \hline
\end{tabular}
\caption{Spatial errors and rates of convergence for $\epsilon= \epsilon_1 =\epsilon_2 =0$.}
\label{table:tab1}
\end{center}
\end{table}

\begin{table}[h!]
\begin{center}
\begin{tabular}{|c|c|c|c|c|}
 \hline
  $h$ & $\|\textbf{u}-\textbf{u}^h\|_{2,1}$ & Rate &$\|T-T^h\|_{2,1}$ & Rate\\
  \hline
   1/4&1.621e-3 &--& 4.003e-3 &--   \\
   \hline
  1/8&4.403e-4 &1.88  &1.01e-3  &1.98   \\
  \hline
  1/16&1.136e-4 &1.95  &2.531e-4  &2.00   \\
  \hline
  1/32&2.879e-5 &1.98 & 6.365e-5 & 1.99   \\
  \hline
  1/64&7.244e-6 &1.98  &1.740e-5  &1.88    \\
  \hline
\end{tabular}
\caption{Spatial errors and rates of convergence for $\epsilon= \epsilon_1 =\epsilon_2 =1$.}
\label{table:tab2}
\end{center}
\end{table}
We also fix the mesh size to $h=1/128$ to see the temporal errors and the convergence rates by using different time steps with an end time of $t=1$. The results are given in Table \ref{table:tab3} and Table \ref{table:tab31}. As expected, we observe a second order convergence in time. However, the velocity error rates becomes better for the stabilized case as $\Delta t$ decreases. In addition, the rates for the temperature errors are far more better than unstabilized case when they are compared with the proposed method.
In summary, the observations of convergence orders of (\ref{Discvel})-(\ref{Distemp}) are in accordance with the discussion in Corollary \ref{corr}.
\begin{table}[h!]
\begin{center}

\begin{tabular}{|c|c|c|c|c|}
  \hline
  $\Delta t$ & $\|\textbf{u}-\textbf{u}^h\|_{2,1}$ & Rate &$\|T-T^h\|_{2,1}=\|S-S^h\|_{2,1}$  & Rate \\
  \hline
   1&3.093e-2 &--& 6.572e-2 &--   \\
   \hline
  1/2&6.662e-3 &2.21 &3.415e-2 &1.01   \\
  \hline
  1/4&1.568e-3&2.08 &1.220e-2 &1.49  \\
  \hline
  1/8&3.842e-4&2.02 & 3.617e-3 & 1.75   \\
  \hline
  1/16&1.007e-4&1.93 &9.841e-4 &1.88    \\
  \hline
\end{tabular}
\caption{Temporal errors and rates of convergence for $\epsilon= \epsilon_1 =\epsilon_2 =0$.}
\label{table:tab3}
\end{center}
\end{table}

\begin{table}[h!]
\begin{center}

\begin{tabular}{|c|c|c|c|c|}
  \hline
  $\Delta t$ & $\|\textbf{u}-\textbf{u}^h\|_{2,1}$ & Rate &$\|T-T^h\|_{2,1}=\|S-S^h\|_{2,1}$  & Rate \\
  \hline
   1&6.203e-3 &--& 7.005e-1 &--   \\
   \hline
  1/2&2.880e-3 &1.10&1.991e-1 &1.81   \\
  \hline
  1/4&1.293e-3&1.15 &5.233e-2 &1.92  \\
  \hline
  1/8&3.921e-4&1.72 & 1.151e-2 & 2.18   \\
  \hline
  1/16&1.058e-4&1.90 &2.610e-3 &2.14    \\
  \hline
\end{tabular}
\caption{Temporal errors and rates of convergence for $\epsilon= \epsilon_1 =\epsilon_2 =1$.}
\label{table:tab31}
\end{center}
\end{table}
\subsection{Buoyancy Driven Cavity Test}
As another numerical test, we apply the proposed method to so-called buoyancy driven cavity flow in a tall rectangular enclosure.
The purpose of this example is to capture correct flow patterns on coarse mesh and to get the correct solution where the unstabilized case fails. In this test, the effects of several dimensionless problem parameters on the solution are considered. We also calculate the Nusselt numbers and Sherwood numbers for this cavity test and compare our results with those reported previously.

The computational domain we use is a rectangular cavity of height $2$ and width $1$ with different temperature and concentration values at vertical walls, which are regarded as hot and cold walls, see Figure \ref{fig:newdomain}. The horizontal walls are insulated and assumed to allow no heat and species transfer through. The boundary conditions are no-slip boundary conditions for the velocity and Dirichlet boundary conditions for the temperature and concentration at vertical walls as well. The horizontal walls accept the boundary conditions, $\frac{\partial T}{\partial n}= \frac{\partial S}{\partial n} = 0$ .
At initial state, the fluid has no motion. According to the variation of temperature and concentration at vertical walls, the motion will be started due to the buoyancy forces as density varies. The final time is chosen to be $t=1$ and the time interval is divided in equidistant time steps of length $10^{-4}$. The stabilization parameters are taken as $\epsilon={O}(\nu),\, \epsilon_1 ={O}(\gamma),\,\epsilon_2 ={O}(D_c)$.
\begin{figure}[htb]
\centerline{\hbox{
\epsfig{figure=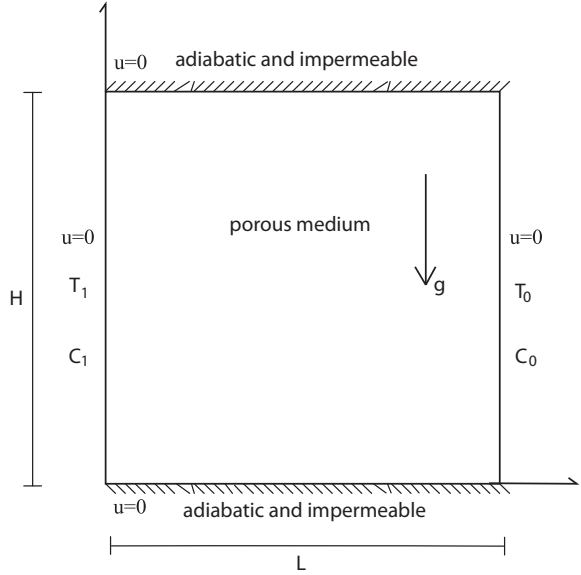,width=0.5\textwidth}
}}
\caption{\label{fig:newdomain} Buoyancy driven cavity flow domain with its boundary conditions}
\end{figure}

Before we present our results, we remark that the correct patterns are captured for all different parameter cases for a very coarse mesh consisting of only $8262$ velocity d.o.f, $4131$ temperature d.o.f and concentration d.o.f.. In general, the proposed method and the unstabilized case produce very similar results for the tests with $Ra \leq 10^5$. However, the unstabilized case gave no result and the solution diverges for $Ra =10^6$. This might be noted as the greatest superiority of our method against the unstabilized case.

\subsubsection{The effect of buoyancy ratio $N$}

\noindent In this test, the effect of buoyancy ratio $N$ is considered for $N=0.8$ and $N=1.3$, by fixing $Pr=1, Ra=10^5$ and $ Le=2$. The results are shown in Figure \ref{fig:N}. It can be observed that the variation of density in concentration is larger than variation in temperature for $N>1$. As it is expected, due to the increase in the buoyancy ratio, the concentration stratification increases. Thus, the force pushing the low concentration fluid up becomes greater, \cite{march}.

For $N<1$, this time density variations are due to the temperature gradients mostly and situation turns out for temperature. These graphics perfectly agree with the benchmark studies of \cite{chen} and \cite{march}. 
\begin{figure}[h!]
\centerline{\hbox{\epsfig{figure=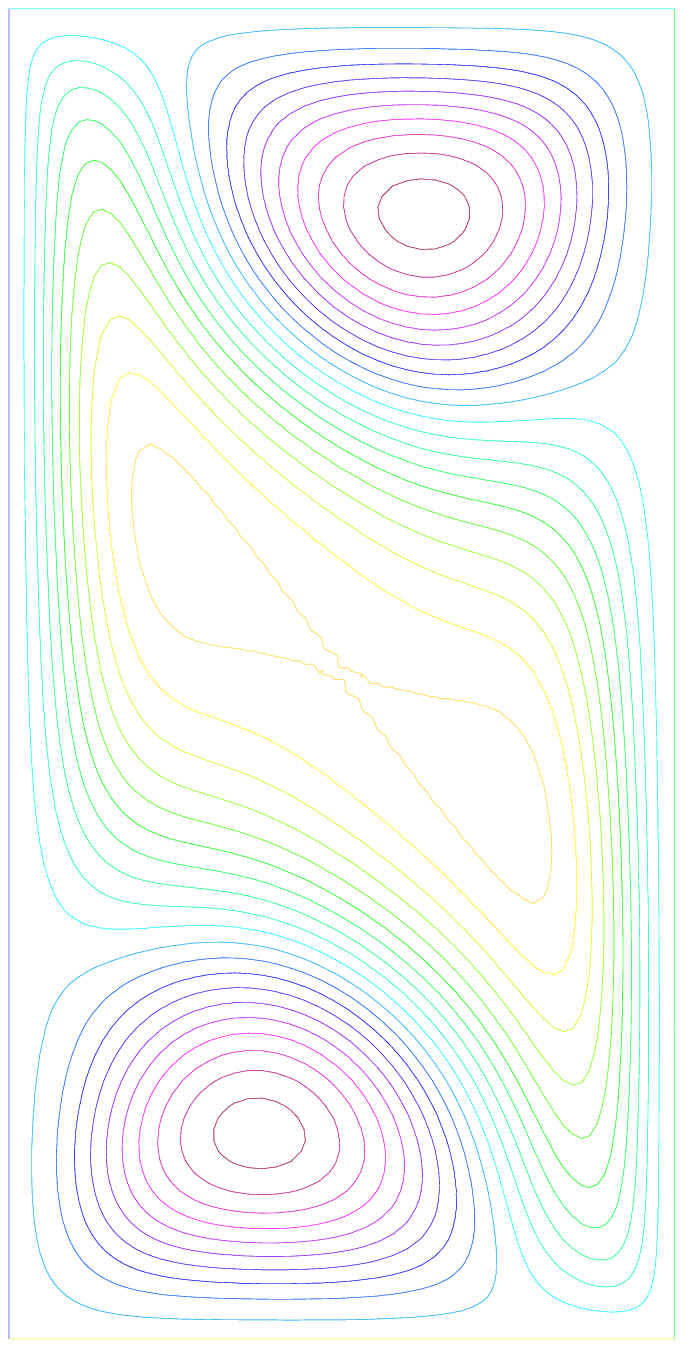,width=0.45\textwidth}
\epsfig{figure=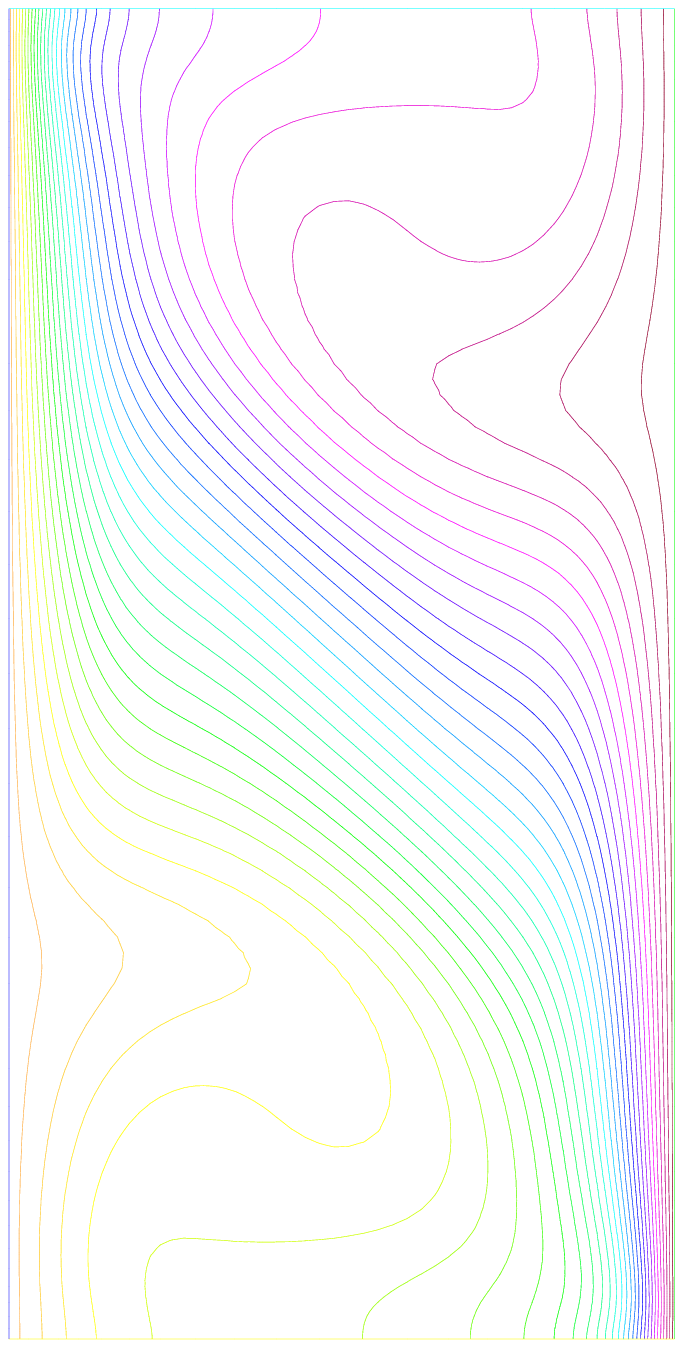,width=0.45\textwidth}
\epsfig{figure=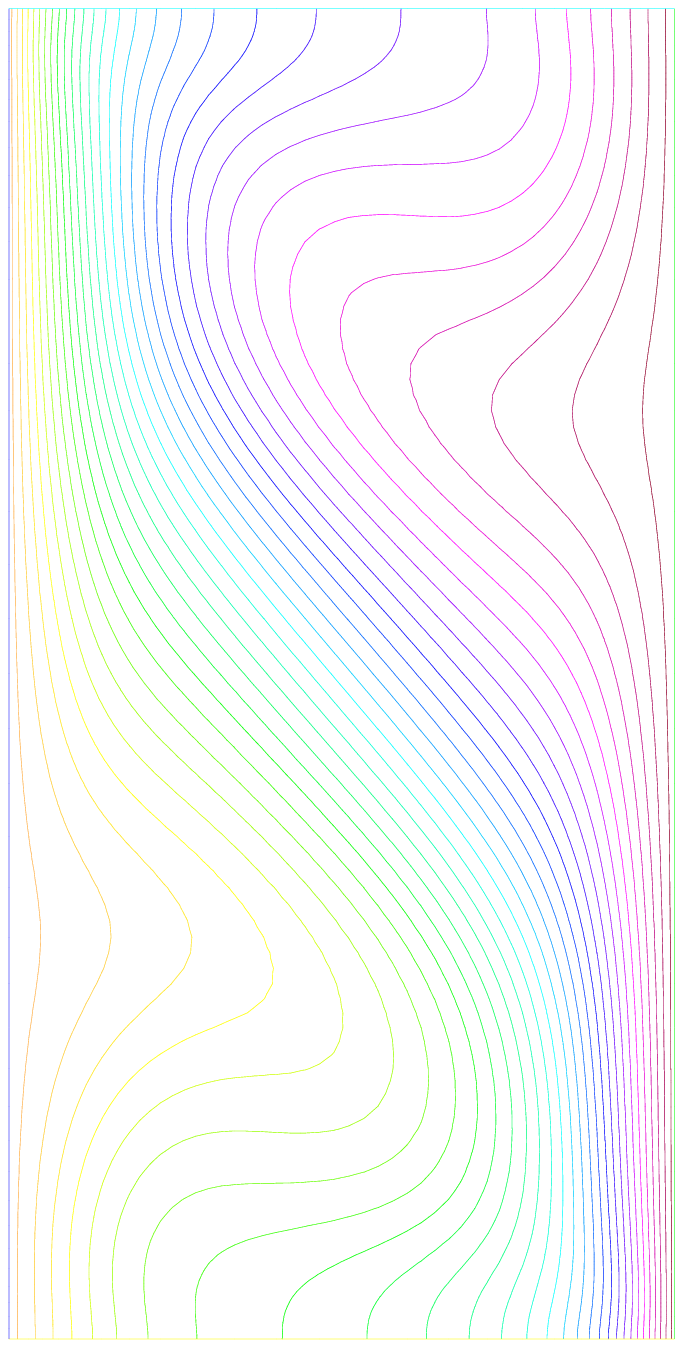,width=0.45\textwidth}
 }}
\centerline{\hbox{\epsfig{figure=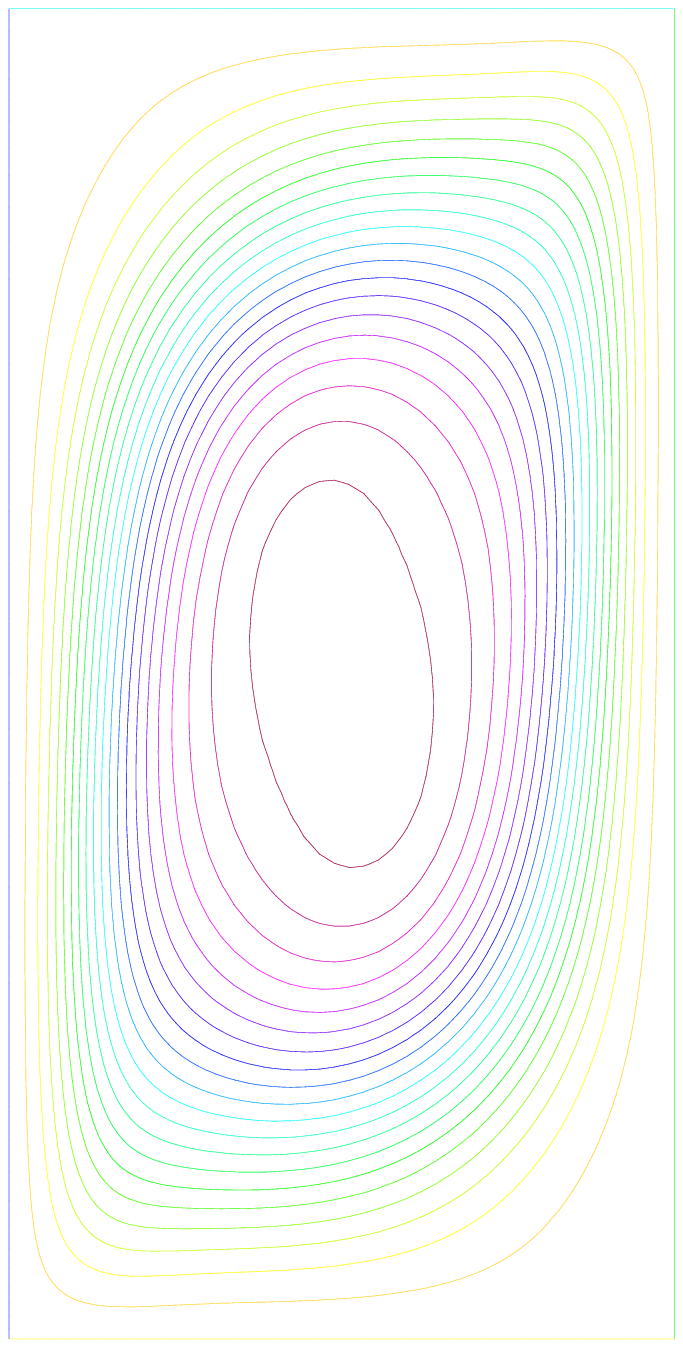,width=0.45\textwidth}
\epsfig{figure=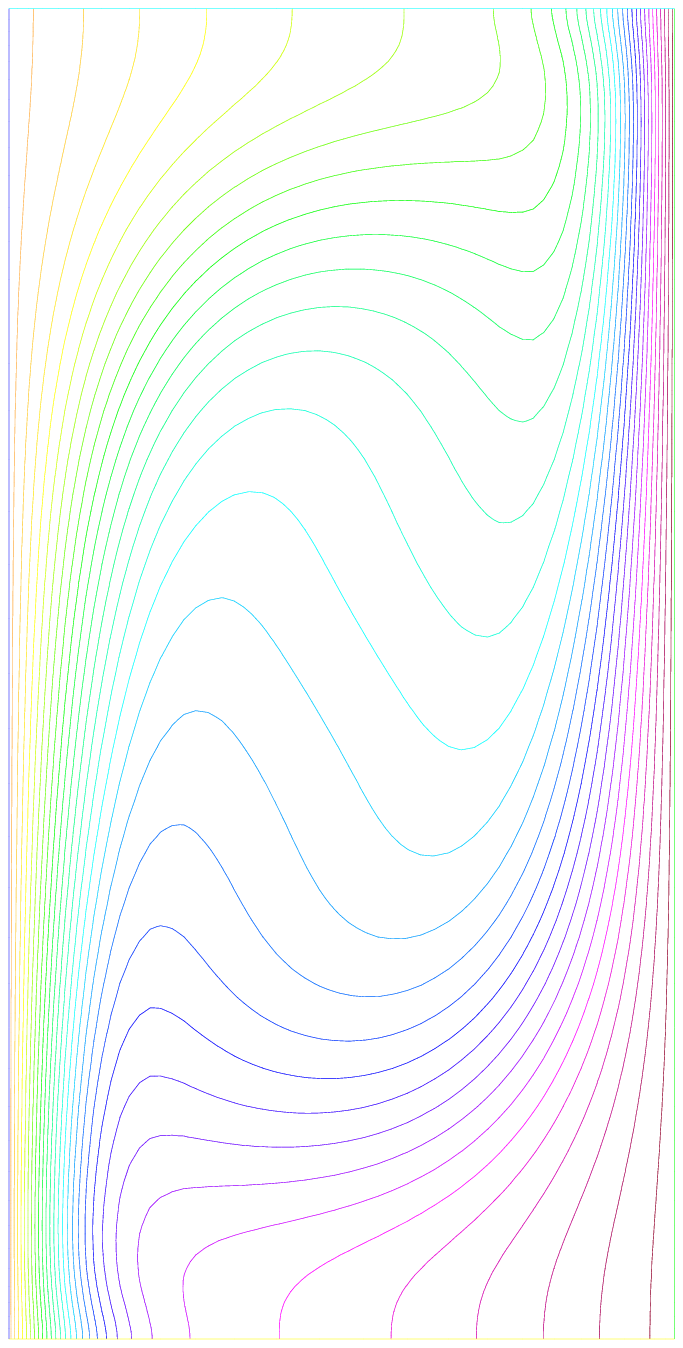,width=0.45\textwidth}
\epsfig{figure=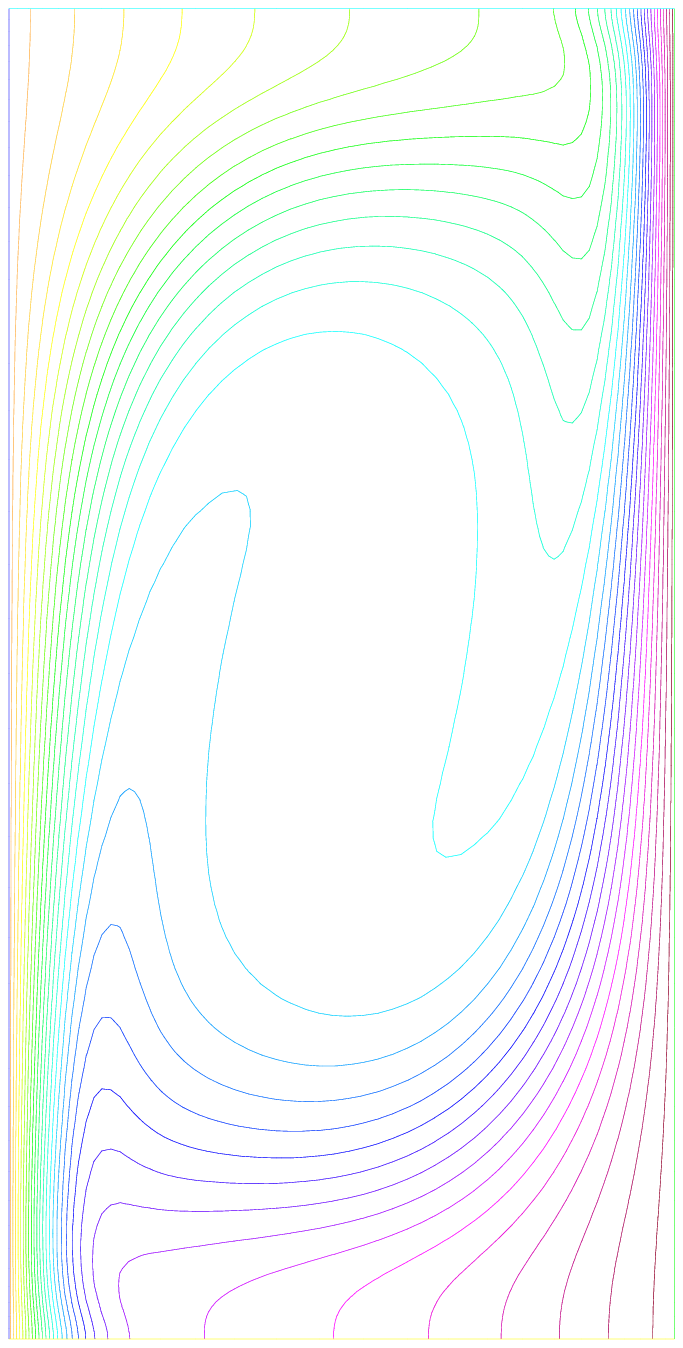,width=0.45\textwidth}
}}
\caption{\label{fig:N} Velocity streamlines, Temperature contours and Concentration contours(from left to right) for $Pr=1, Ra=10^5, Le=2$ with  $N=1.3$ (up) and $N=0.8$ (down) }
\end{figure}

\subsubsection{The effect of Lewis number $Le$}
\noindent The effect of Lewis number is considered with the choices of $Le= 0.2$ and $Le= 1.0$. The values of $Pr=1, Ra=10^5$ and $N=1$ are fixed in all computations. Due to the definition of Lewis number, $Le \leq 1.0$ means the mass diffusivity is greater than the thermal diffusivity. In this case, the concentration becomes dominant because of its better capability of spreading higher concentration values. The value of $Le= 1.0$ means equal diffusivity case. When temperature and concentration behave in the same way, the forces made by the temperature and concentration cancel each other in both walls initially. Thus, the fields diffuse exactly in the same way and these forces always are balanced equally. The final solution is just the diffusion of the fields through the domain as it is noted in \cite{march}. The mentioned situations above could be observed directly from Figure \ref{fig:Le}.
\begin{figure}[h!]
\centerline{\hbox{
\epsfig{figure=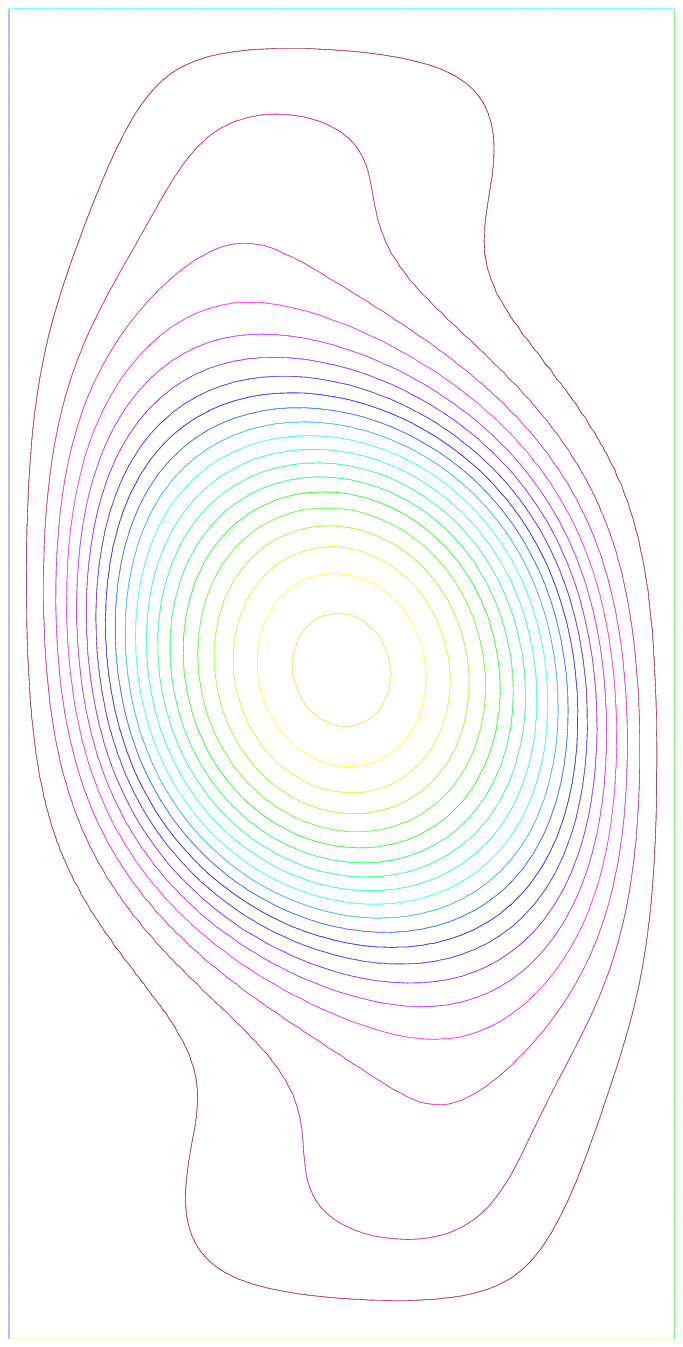,width=0.45\textwidth}
\epsfig{figure=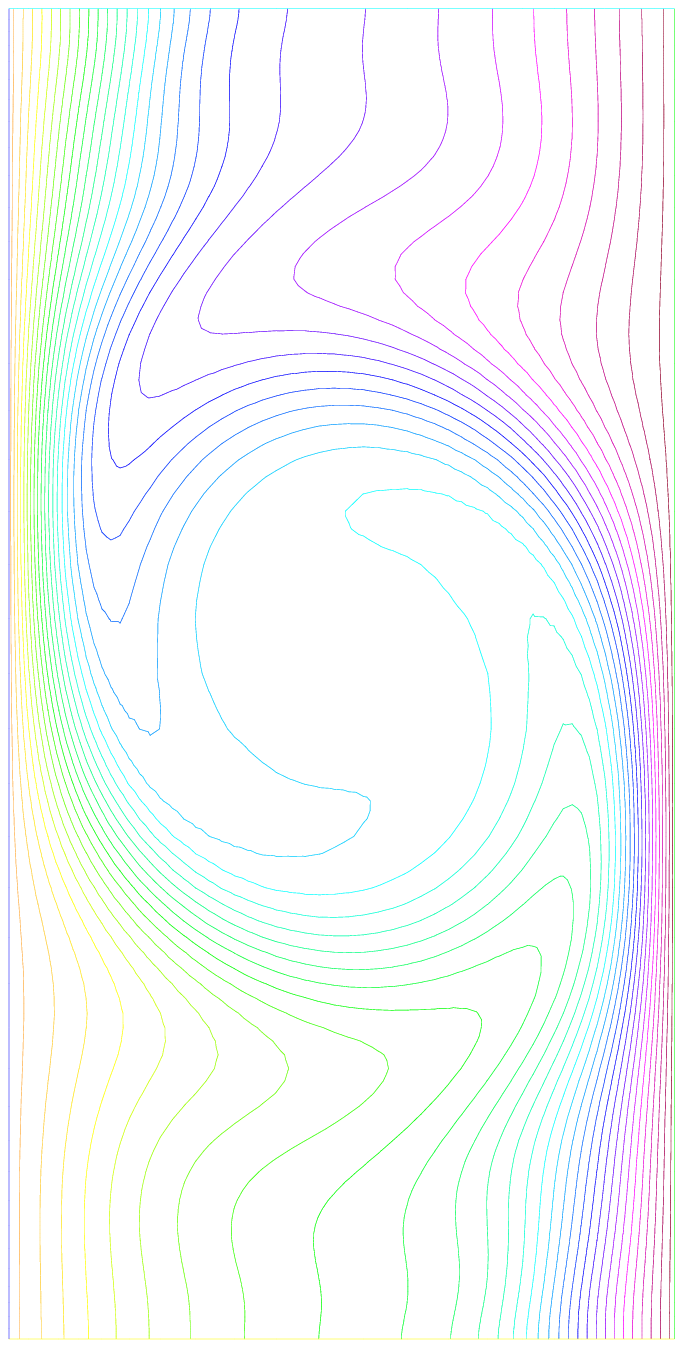,width=0.45\textwidth}
\epsfig{figure=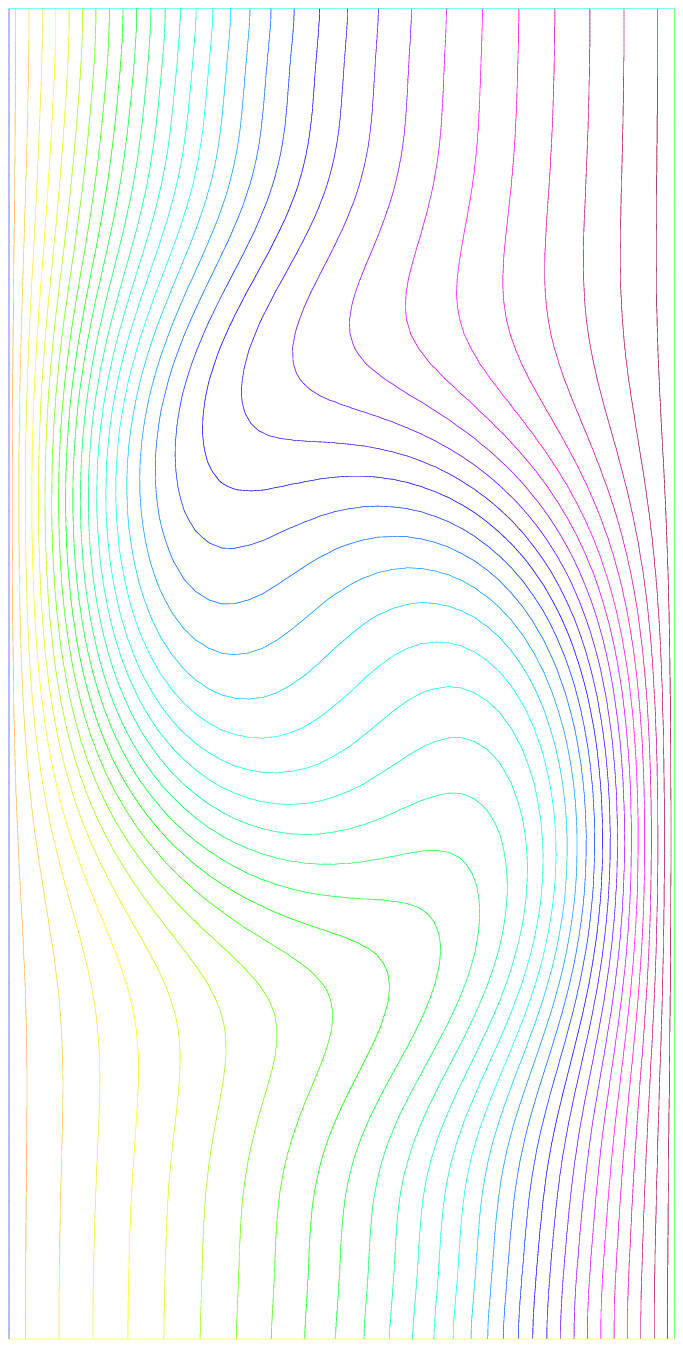,width=0.45\textwidth}
}}
\centerline{\hbox{
\epsfig{figure=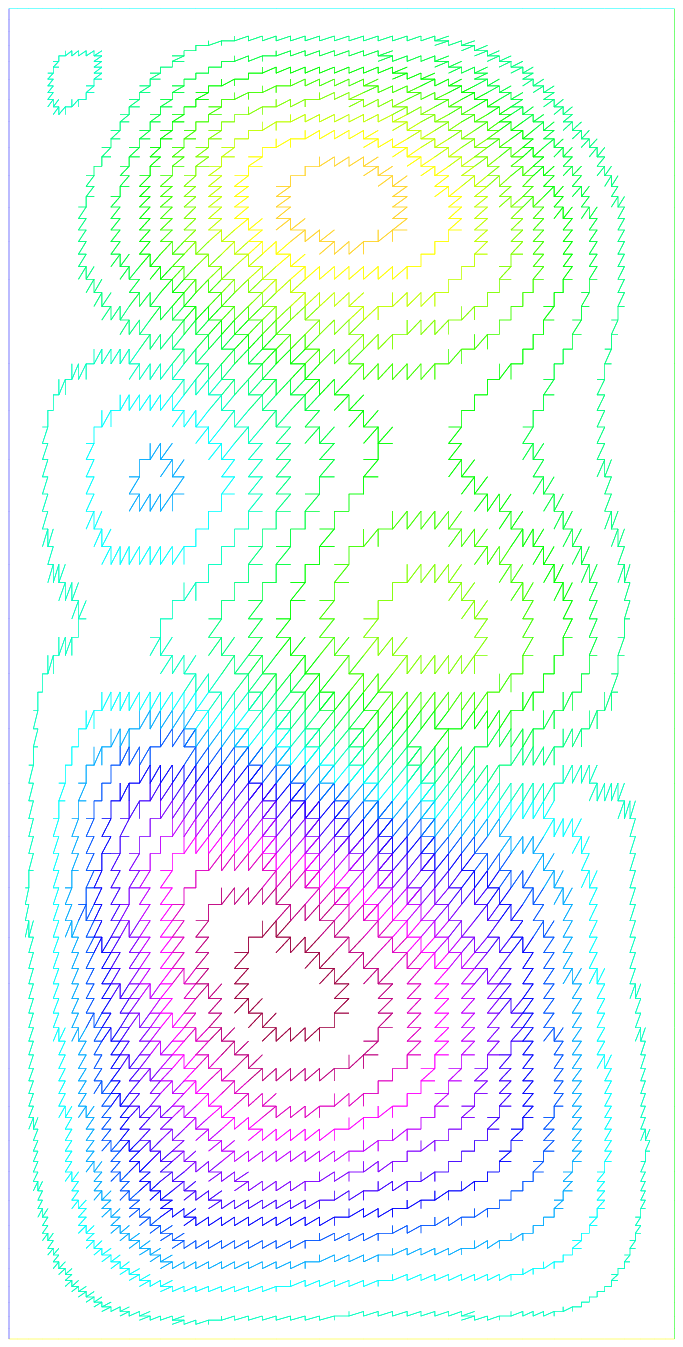,width=0.45\textwidth}
\epsfig{figure=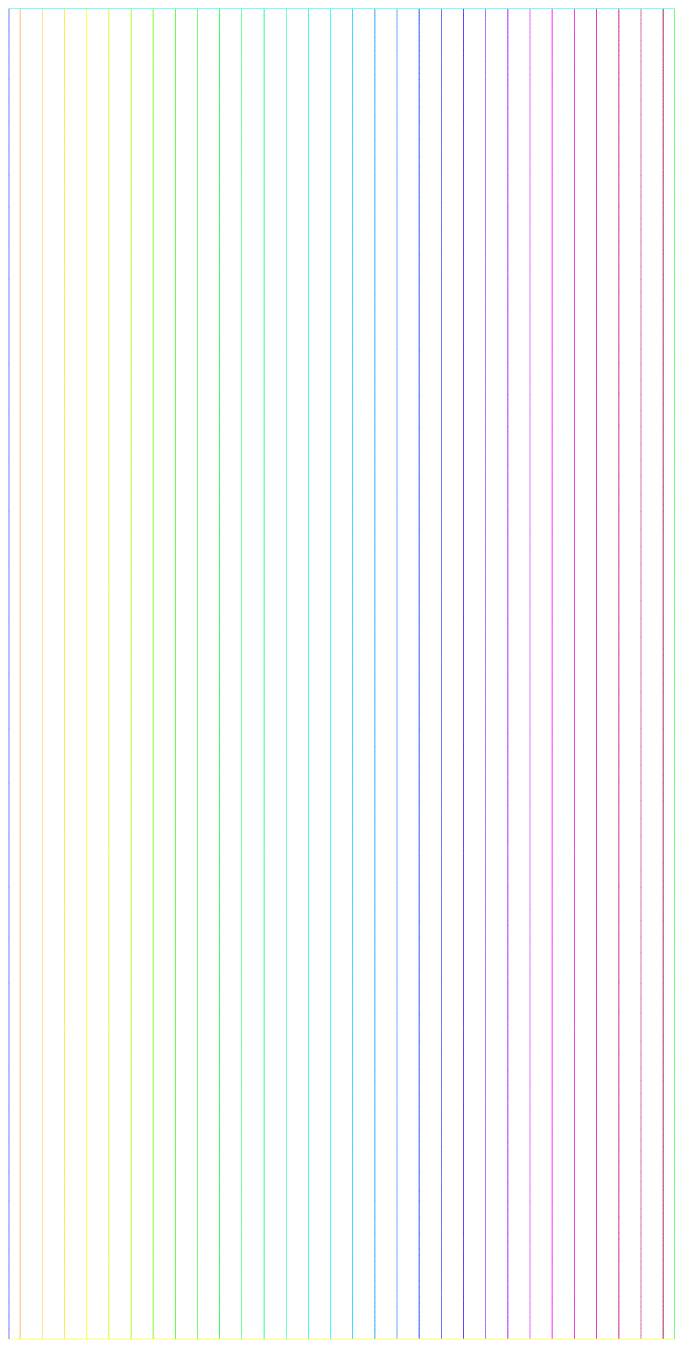,width=0.45\textwidth}
\epsfig{figure=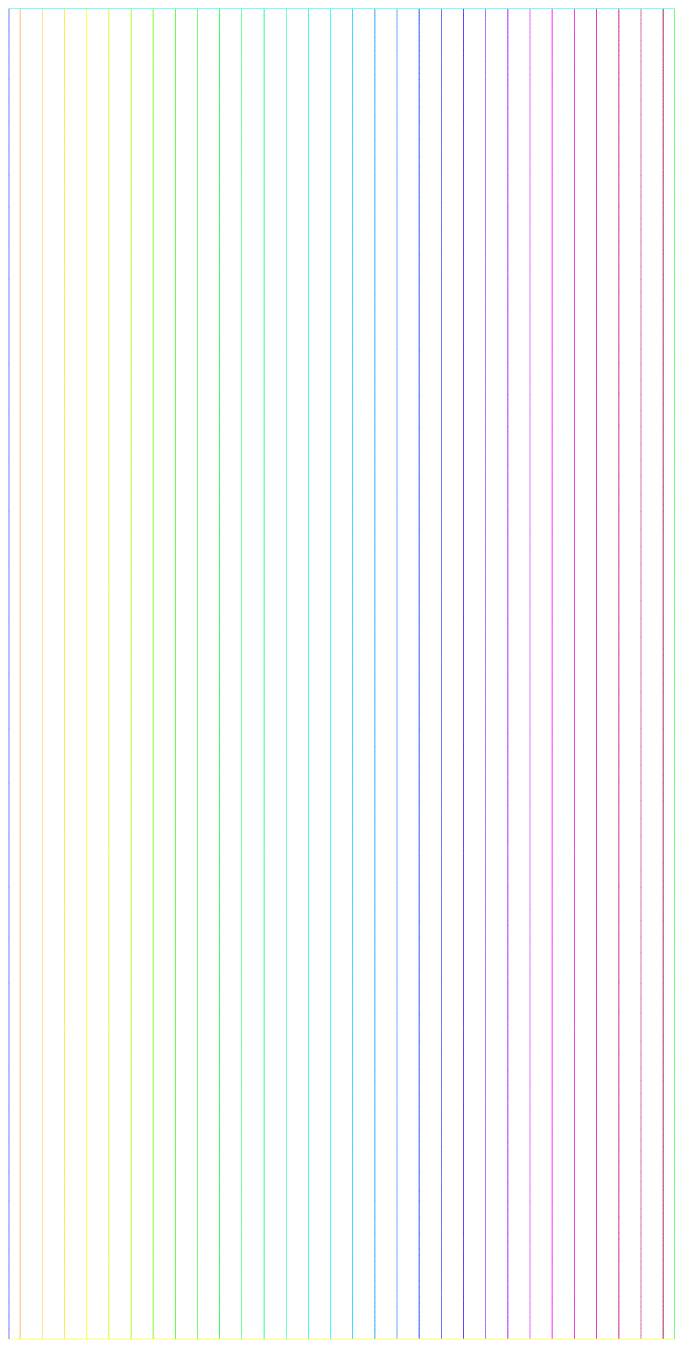,width=0.45\textwidth}
}}
\caption{\label{fig:Le} Velocity streamlines, Temperature contours and Concentration contours (from left to right) for $Pr=1, Ra=10^5, N=1$ with  $Le=0.2$ (up) and $Le=1.0$ (down) }
\end{figure}

\subsubsection{The effect of Rayleigh number $Ra$}
\noindent For natural convection type problems, increasing the Rayleigh number and keeping the thermal and mass diffusivity parameters constant will increase the characteristic velocity of the flow. This can cause the flow behave turbulent. Since the transition to turbulent case means richness of the flow scales, dealing with a very challenging numerical problem is inevitable as Rayleigh number increases.
The test is carried out for three different Rayleigh numbers, $Ra = 10^4, 10^5,10^6$ with the coarse mesh discretization. The results are presented in Figure \ref{fig:Ra} only for the case $Ra = 10^6$. For other $Ra$ values, the figures are similar and we do not depict. All the results are comparable with \cite{fat}, which uses a POD-ROM scheme and an extra VMS stabilization for  $Ra = 10^6$ for finer meshes.

\begin{figure}[h!]
\centerline{\hbox{
\epsfig{figure=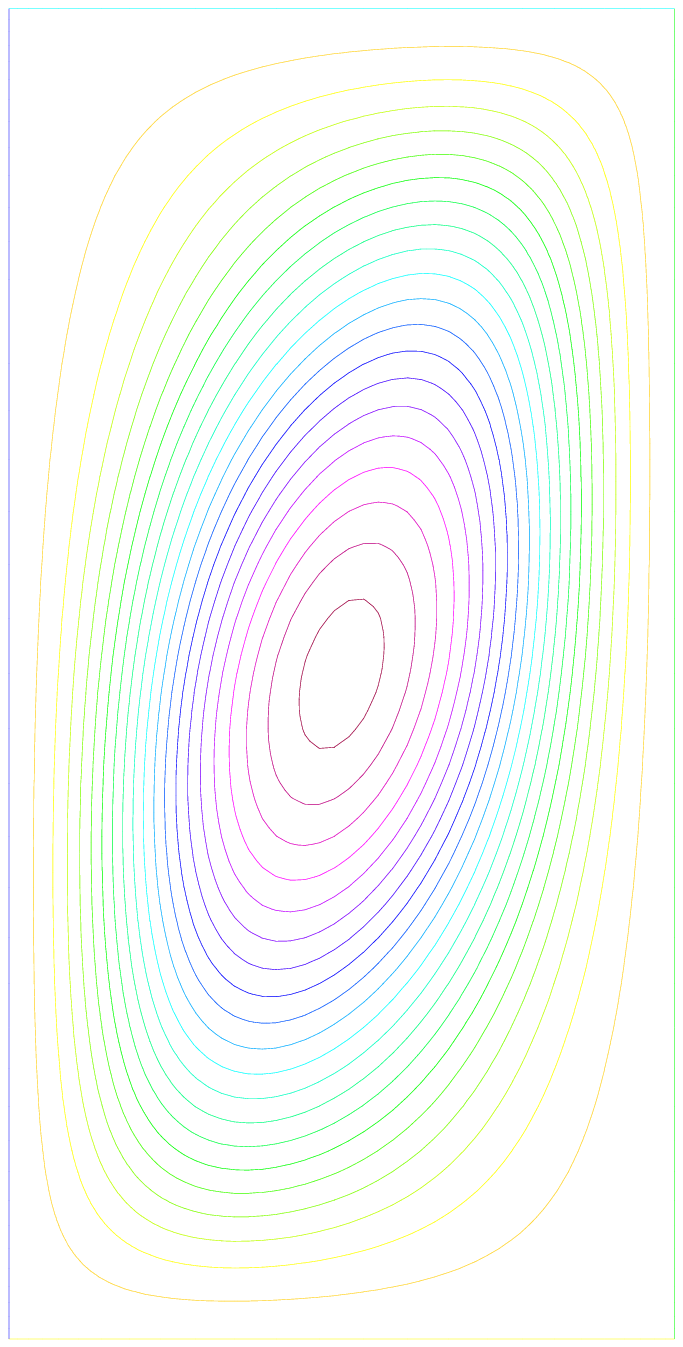,width=0.45\textwidth}
\epsfig{figure=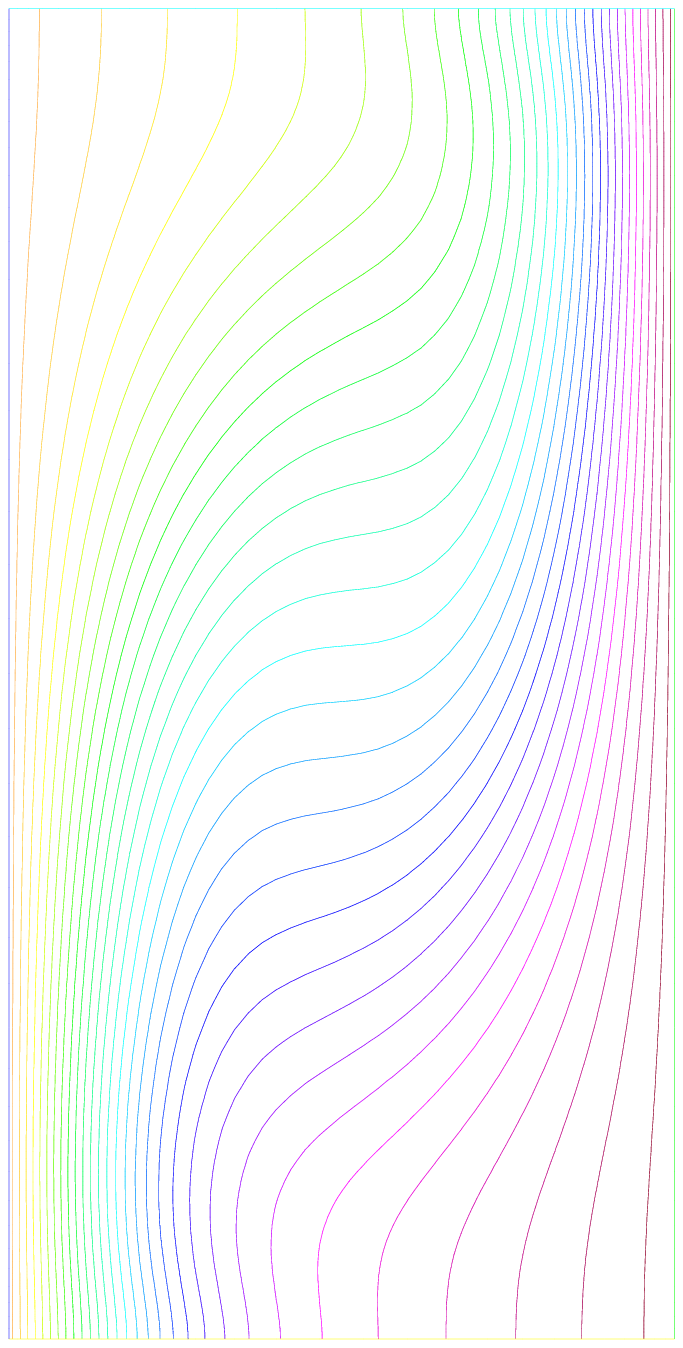,width=0.45\textwidth}
\epsfig{figure=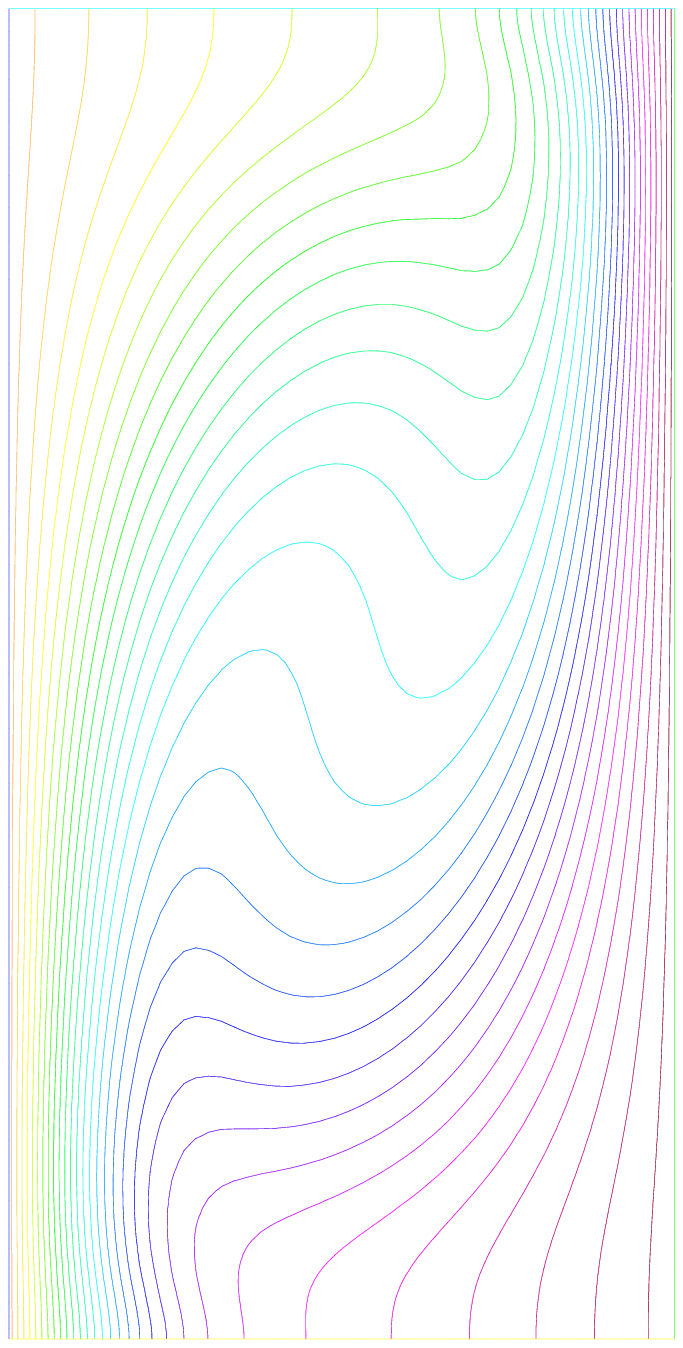,width=0.45\textwidth}
}}
\centerline{\hbox{
\epsfig{figure=str_105_Eps_1.eps,width=0.45\textwidth}
\epsfig{figure=T_105_Eps_1_empty.eps,width=0.45\textwidth}
\epsfig{figure=C_105_Eps_1_empty.eps,width=0.45\textwidth}
}}
\centerline{\hbox{
\epsfig{figure=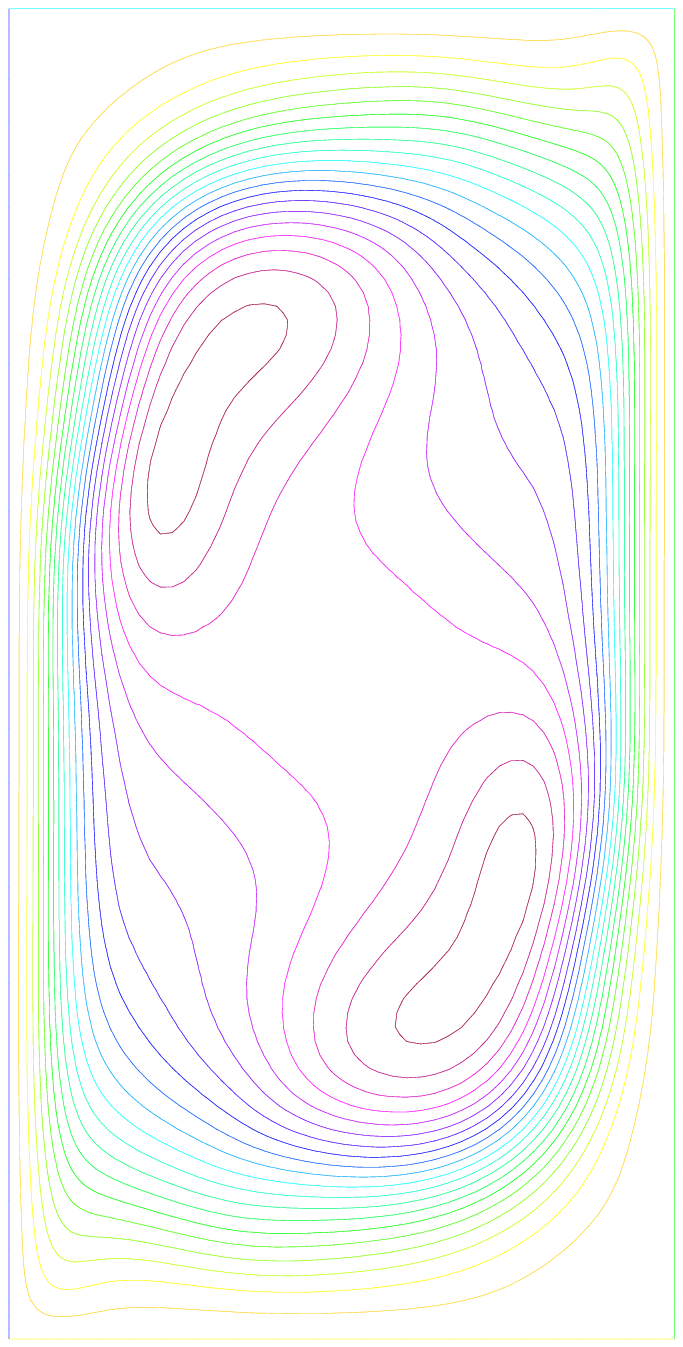,width=0.45\textwidth}
\epsfig{figure=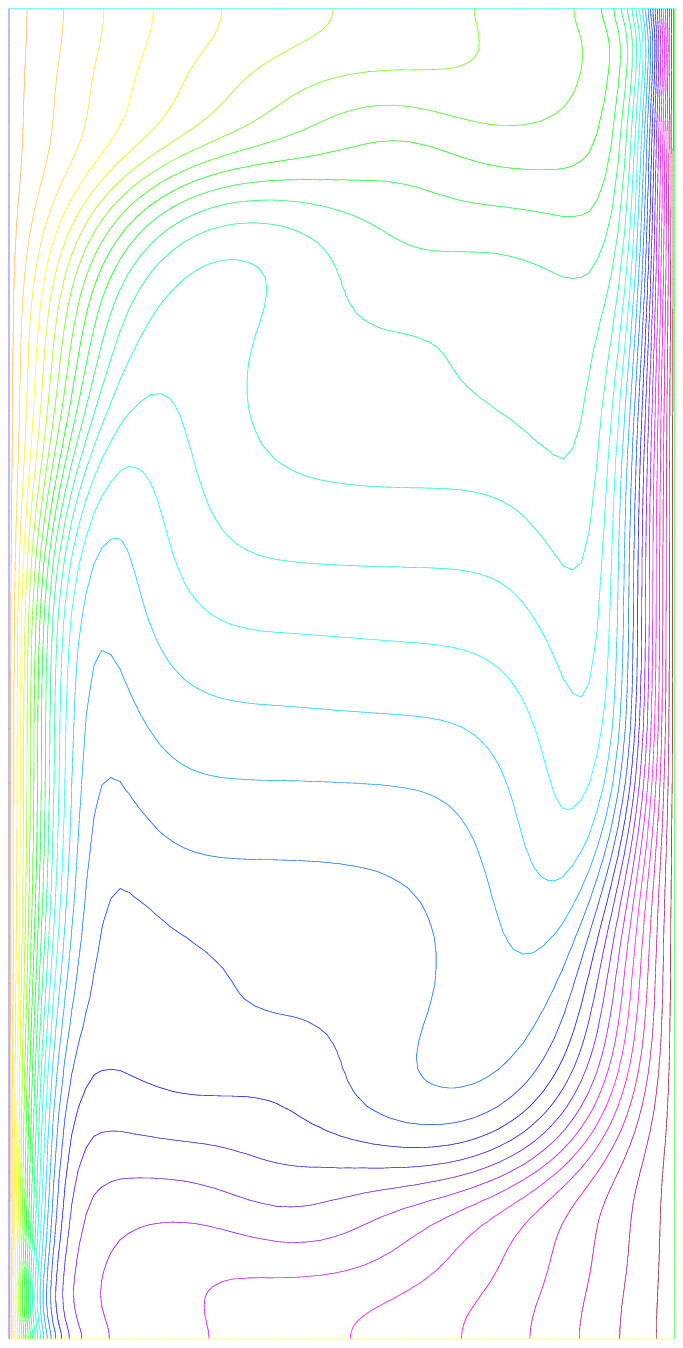,width=0.45\textwidth}
\epsfig{figure=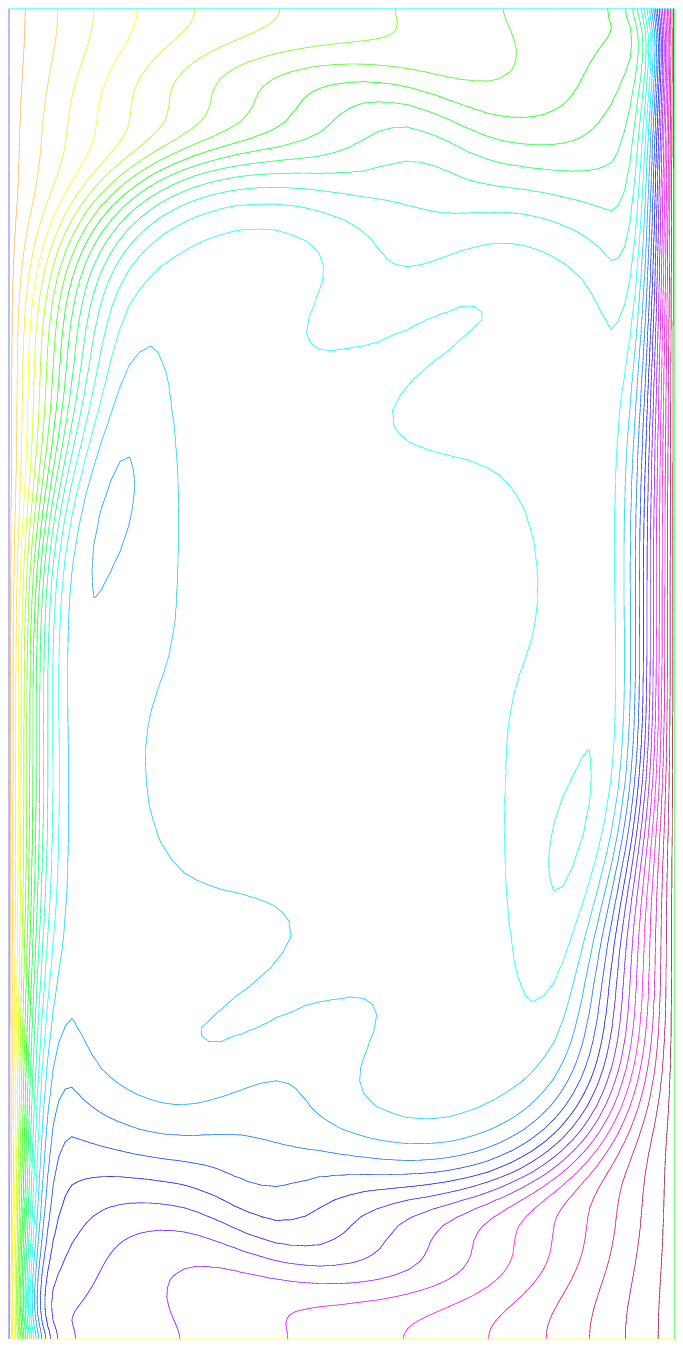,width=0.45\textwidth}
}}
\caption{\label{fig:Ra} Velocity streamlines, Temperature contours and Concentration contours (from left to right) for $Pr=1, Le=2, N=0.8$ with  $Ra=10^4$ (up), $Ra=10^5$ (middle) and $Ra=10^6$ (down) }
\end{figure}

\subsubsection{Thermal and Mass Distributions in Buoyancy Driven Cavity}
In terms of engineering, calculation of thermal and mass distributions along with different boundaries which are kept at different temperature and concentration are of vital importance for convective flows inside enclosures. There are physical parameters called the Nusselt number (Nu) and Sherwood number (Sh) for measuring these distributions. Local and average Nusselt and Sherwood numbers are given with the following formulas
\begin{align*}
\mbox{Nu}_{loc} = \pm\left\{\frac{\partial T}{\partial x}\right\}_{wall},
\mbox{Nu}_{av} =  \int_{\Omega} \mbox{Nu}_{loc} dy.\\
\mbox{Sh}_{loc} = \pm\left\{\frac{\partial S}{\partial x}\right\}_{wall},
\mbox{Sh}_{av} =  \int_{\Omega} \mbox{Sh}_{loc} dy.
\end{align*}
Calculation of $\mbox{Nu}_{av}$ and $\mbox{Sh}_{av}$ at a buoyancy driven cavity test example has been widely used in order to verify and validate proposed numerical schemes on produced codes.  The flow parameters are taken as $Pr=1, Le=2, N=0.8$ for  $Ra=10^4$ and $Ra=10^5$  in this test. Well-known numerical simulations in literature known to obtain such results for a $100 \times 200$ rectangle, which is regarded as a coarse mesh  \cite{chen}.
Table \ref{table:flux1} and Table \ref{table:flux2}  gives a comparison of the results of both presented method and results of \cite{chamka, chen}. As it is seen, acceptable results for $\mbox{Nu}$ and $\mbox{Sh}$ are obtained with the proposed algorithm.
\begin{table}[h!]
    \centering
    \begin{tabular}{|c|c|c|c|c|c|c|}
    \hline
    Ra & Proposed Method & Ref. \cite{chamka}  & Ref. \cite{chen} \\[0.5ex] \hline
    $10^4$  & 3.65(25$\times$40) & 3.67(31$\times$41) & 3.68(100$\times$200) \\[1ex] \hline
     $10^5$  & 6.78(25$\times$40) & 6.82(31$\times$41) & 6.84(100$\times$200) \\[1ex] \hline
    \end{tabular}
    \caption{Comparison of average Nusselt numbers on the vertical boundary of the cavity at $x=0$ (hot wall) for $Pr=1, Le=2, N=0.8$ with mesh size used in computation for varying Rayleigh Numbers}
     \label{table:flux1}
    \end{table}

\begin{table}[h!]
    \centering
    \begin{tabular}{|c|c|c|c|c|c|c|}
    \hline
    Ra & Proposed Method & Ref. \cite{chamka}  & Ref. \cite{chen} \\[0.5ex] \hline
    $10^4$  & 4.78(25$\times$40) & 4.89(31$\times$41) & 4.91(100$\times$200) \\[1ex] \hline
     $10^5$  & 8.75(25$\times$40) & 6.82(31$\times$41) & 8.70(100$\times$200) \\[1ex] \hline
    \end{tabular}
    \caption{Comparison of average Sherwood numbers on the vertical boundary of the cavity at $x=0$ (hot wall) for $Pr=1, Le=2, N=0.8$ with mesh size used in computation for varying Rayleigh Numbers}
     \label{table:flux2}
    \end{table}
\section{Conclusion}
In this paper, we proposed, analyzed and tested a new optimally accurate numerical regularization based on the idea of curvature stabilization for (a family of) second order time-stepping methods for the double-diffusive convection system. Unconditional stability results are derived for velocity, temperature and concentration. Since the method has the advantage of requiring the solution of only one linear system per time step, it is efficient in terms of computational effort. We also give a rigorous proof of the convergence of the method. Several numerical tests were presented to prove the efficiency of the proposed method. The idea of curvature stabilization could be cast on different types of flow problems which would be considered as future studies.
\bibliography{reference}

\providecommand{\bysame}{\leavevmode\hbox to3em{\hrulefill}\thinspace}
\providecommand{\MR}{\relax\ifhmode\unskip\space\fi MR }
\providecommand{\MRhref}[2]{%
  \href{http://www.ams.org/mathscinet-getitem?mr=#1}{#2}
}
\providecommand{\href}[2]{#2}
\begin{thebibliography}{10}

\bibitem{Ada75}
R.A. Adams, \emph{Sobolev spaces}, Academic Press, New York, 1975.

\bibitem{AKR}
M.~Akbas, S.~Kaya, and L.~G. Rebholz, \emph{On the stability at all times of
  linearly extrapolated {BDF2} time-stepping for multiphysics incompressible
  flow problems}, Numer. Methods Partial Differential Equations \textbf{33}
  (2016), 999--1017.

\bibitem{B76}
G.~Baker, \emph{{G}alerkin approximations for the{ Navier}-{Stokes} equations},
  Tech. report, Harvard University.

\bibitem{BS89}
T.~L. Bergman and R.~Srinivasan, \emph{Numerical simulation of {Soret}-induced
  double diffusion in an initially uniform concentration binary liquid}, Int.
  J. Heat Mass Transfer \textbf{32} (1989), 679--687.

\bibitem{BS08}
S.~C. Brenner and L.~R. Scott, \emph{The mathematical theory of finite element
  methods}, Texts in Applied Mathematics, vol.~15, Springer, Berlin, 2008.

\bibitem{chamka}
A.~Chamka and H.~Al-Naser, \emph{Hydromagnetic double-diffusive convection in a
  rectangular enclosure with opposing temperature and concentration
  gradients.}, Int. J. Heat Mass Transfer \textbf{45} (2002), 2465--2483.

\bibitem{chen}
S.~Chen, J.~T\"{o}lke, and M.~Krafczyk, \emph{Numerical investigation of
  double-diffusive (natural) convection in vertical annuluses with opposing
  temperature and concentration gradients.}, International Journal of Heat and
  Fluid Flow \textbf{31} (2010), 217--226.

\bibitem{DP09}
L.~Davis and F.~Pahlevani, \emph{Semi-implicit schemes for transient {Navier-
  Stokes} equations and eddy viscosity models}, Numer. Methods Partial
  Differential Equations \textbf{25} (2009), 212--231.

\bibitem{fat}
F.~G. Eroglu, S.~Kaya, and L.~Rebholz, \emph{Pod-rom for the darcy-{B}rinkman
  equations in double-diffusive convection.}, J. Numer. Math. (2018).

\bibitem{GR79}
V.~Girault and P.~A. Raviart, \emph{Finite element approximation of the
  {Navier}-{Stokes} equations}, Lecture Notes in Mathematics 749,
  Springer-Verlag, Berlin, 1979.

\bibitem{GR}
\bysame, \emph{Finite element methods for the {Navier}-{Stokes} equations
  theory and algorithms}, Springer-Verlag, 1986.

\bibitem{goy}
B.~Goyeau, J.P. Songbe, and D.~Gobin, \emph{Numerical study of double-diffusive
  natural convection in a porous cavity using the {Darcy-Brinkman}
  formulation}, Int. J. Heat Mass Transfer. \textbf{39} (1995), 1363--1378.

\bibitem{GS00}
P.~Gresho and R.~Sani, \emph{Incompressible flow and the finite element method
  vol. 1: Advection-diffusion and isothermal laminar flow}, Wiley., New York,
  2000.

\bibitem{HW02}
E.~Hairer and G.~Wanner, \emph{Solving ordinary differential equations ıı :
  stiff and differential algebraic problems}, Springer-Verlag, 2002.

\bibitem{H03}
Y.~He, \emph{Two-level method based on finite element and {Crank}-{Nicolson}
  extrapolation for the time-dependent {Navier-Stokes} equations}, SIAM J.
  Numer. Anal. \textbf{41} (2003), 1263 -- 1285.

\bibitem{hec}
F.~Hecht, \emph{New development in freefem++.}, J. Numer. Math. \textbf{20}
  (2012), 251--265.

\bibitem{I13}
R.~Ingram, \emph{A new linearly extrapolated { Crank}-{Nicolson} time-stepping
  scheme for the {Navier}-{Stokes} equations}, Math. Comp. \textbf{82} (2013),
  953--1973.

\bibitem{JMR}
N.~Jiang, M.~Mohebujjaman, L.~G. Rebholz, and C.~Trenchea, \emph{An optimally
  accurate discrete regularization for second order time stepping methods for
  {Navier-Stokes} equations}, Comput. Methods Appl. Mech. Engrg. \textbf{310}
  (2016), 388--405.

\bibitem{Joh16}
V.~John, \emph{Finite element methods for incompressible flow problems},
  Springer Ser. Comput. Math., vol.~51, Springer-Verlag, Berlin, 2016.

\bibitem{J12}
A.~D. Jorgenson, \emph{Unconditional stability of a
  {Crank}-{Nicolson}/{Adams}-{Bashforth} 2 implict/explicit method for ordinary
  differential equations}, Master's thesis, University of Pittsburgh, 2012.

\bibitem{L09}
A.~Labovsky, W.~Layton, C.~Manica, M.~Neda, and L.~Rebholz, \emph{The
  stabilized extrapolated trapezoidal finite element method for the
  {Navier-Stokes} equations}, Comput. Methods Appl. Mech. Engrg. \textbf{198}
  (2009), 958--974.

\bibitem{Lay08}
W.~Layton, \emph{Introduction to finite element methods for incompressible,
  viscous flows}, SIAM, 2008.

\bibitem{WT98}
W.~Layton and L.~Tobiska, \emph{A two-level method with backtracking for the
  {Navier}- {Stokes} equations}, SIAM J. Numer. Anal. \textbf{35} (1998),
  2035--2054.

\bibitem{HLLT}
Y.~Li, N.~Hurl, W.~Layton, and C.~Trenchea, \emph{Stability analysis of the
  {Crank-Nicolson-Leapfrog} method with the { Robert-Asselin-Williams} time
  filter}, BIT \textbf{54} (2014), 1--13.

\bibitem{LLT}
Y.~Li, W.~Layton, and C.~Trenchea, \emph{Recent developments in imex methods
  with time filters for systems of evolution equations}, J. Comput. Appl. Math.
  \textbf{299} (2016), 50--67.

\bibitem{LC14}
Y.~Li and C.~Trenchea, \emph{A higher-order {Robert-Asselin} type time filter},
  J. Comput. Phys. \textbf{259} (2014), 23--32.

\bibitem{march}
R.~March, A.~Coutinho, and R.~Elias, \emph{Stabilized finite element simulation
  of double-diffusive natural convection}, Mecanica Computacional \textbf{29}
  (2010), 7985--8000.

\bibitem{NT15}
J.~Nichele and D.~A. Teixeira, \emph{Evaluation of {D}arcy-{B}rinkman equation
  for simulations of oil flows in rocks}, J. Petrol Sci. Eng. \textbf{134}
  (2015), 76--78.

\bibitem{R12}
S.~S. Ravindran, \emph{Convergence of extrapolated {BDF2} finite element
  schemes for unsteady penetrative convection model}, Numer. Funct. Anal. and
  Optim. \textbf{33:1} (2012), 48--79.

\bibitem{R15}
\bysame, \emph{An analysis of the blended three-step backward differentiation
  formula time-stepping scheme for the{N}avier-{S}tokes-{T}ype system related
  to soret convection}, Numer. Funct. Anal. and Optim. \textbf{36} (2015),
  658--686.

\bibitem{SG14}
J.~Serrano-Arellano, M.~Gij�n-Rivera, J.M. Riesco-�vila, and
  F.~Elizalde-Blancas, \emph{Numerical study of the double diffusive convection
  phenomena in a closed cavity with internal {CO2} point sources}, Int. J. Heat
  Mass Transfer \textbf{71} (2014), 664--674.

\bibitem{C14}
C.~Trenchea, \emph{Stability of partitioned imex methods for systems of
  evolution equations with skew-symmetric coupling}, ROMAI J. \textbf{10}
  (2014), 175--189.

\bibitem{C16}
\bysame, \emph{Second order implicit for local effects and explicit for
  nonlocal effects is unconditionally stable}, ROMAI J. \textbf{1} (2016),
  163--178.

\bibitem{WL97}
E.~Weinan and J.-G. Liu, \emph{Simple finite element method in vorticity
  formulation for incompressible flows}, Math. Comp. \textbf{70} (1997),
  579--593.

\bibitem{W9}
P.~D. Williams, \emph{A proposed modification to the {Robert-Asselin} time
  filter}, Mon. Weather Rev. \textbf{137} (2009), 175--189.

\bibitem{XPT}
S.H. Xin, P.L.~Qu\' er\' e, and L.S. Tuckerman, \emph{Bifurcation analysis of
  double-diffusive convection with opposing horizontal thermal and solutal
  gradients}, Phys. Fluids \textbf{10} (1998), 85--858.

\bibitem{ZXZ}
C.~Xu, Y.~Zhang, and J.~Zhou, \emph{Convergence of a linearly extrapolated {
  BDF2 } finite element scheme for viscoelastic fluid flow}, Bound. Value
  Probl. \textbf{140} (2017).

\bibitem{YJ}
Y.~Yang and Y.L. Jiang, \emph{Numerical analysis and computation of a type of
  imex method for the time-dependent natural convection problem}, Comput. Meth.
  Appl. Math. \textbf{16} (2016), 321--344.

\end{thebibliography}
\bibliographystyle{amsplain}
\end{document}